\numberwithin{equation}{section}
\newtheorem{theorem}{Theorem}[section]
\newtheorem{cor}{Corollary}[section]
\newtheorem{lemma}{Lemma}[section]
\newtheorem{prop}{Proposition}[section]
\theoremstyle{definition}
\newtheorem{defin}{Definition}[section]
\newtheorem{remark}{Remark}[section]
\newtheorem{remarks}{Remarks}[section]
\newtheorem*{assum*}{\assumptionnumber}
\providecommand{\assumptionnumber}{}
\newenvironment{assum}[1]
 {%
  \renewcommand{\assumptionnumber}{Assumption #1}%
  \begin{assum*}%
  \protected@edef\@currentlabel{#1}%
 }
 {%
  \end{assum*}
 }
\newcommand{\vertiii}[1]{{\left\vert\kern-0.25ex\left\vert\kern-0.25ex\left\vert #1 
    \right\vert\kern-0.25ex\right\vert\kern-0.25ex\right\vert}}
\newcommand{\bj}{\mathbf{j}}
\newcommand{\bi}{\mathbf{i}}
\renewcommand{\cite}{\citep}
\begin{document}

\title[Opinion dynamics on networks with community structure]{Opinion dynamics on non-sparse networks \\  with community structure}

\author{Panagiotis Andreou}
\address{\textsc{University of North Carolina at Chapel Hill}}
\curraddr{}
\email{pandreou@email.unc.edu}
\thanks{}

\author{Mariana Olvera-Cravioto}
\address{\textsc{University of North Carolina at Chapel Hill}}
\curraddr{}
\email{molvera@email.unc.edu}
\thanks{}

\subjclass[2020]{60G10, 05C80, 60G17, 60J20, 60J85, 60K35}

\keywords{opinion dynamics, inhomogeneous random graphs, interacting particle systems, mean-field approximation, propagation of chaos, strong couplings}

\date{}

\dedicatory{}

\begin{abstract}
We study the evolution of opinions on a directed network with community structure. Individuals update their opinions synchronously based on a weighted average of their neighbors' opinions, their own previous opinions, and external media signals. Our model is akin to the popular Friedkin-Johnsen model, and is able to incorporate factors such as stubbornness, confirmation bias, selective exposure, and multiple topics, which are believed to play an important role in the formation of opinions. Our main result shows that, in the large graph limit,  the opinion process concentrates around its mean-field approximation for any level of edge density, provided the average degree grows to infinity. Moreover, we show that the opinion process exhibits propagation of chaos. We also give results for the trajectories of individual vertices and the stationary version of the opinion process, and prove that the limits in time and in the size of the network commute. The mean-field approximation is explicit and can be used to quantify consensus and polarization. 
\end{abstract}

\maketitle

\section{Introduction}\label{intro}

As the consumption of information through virtual social networks has grown, the number of news outlets has multiplied, and individuals increasingly find ways to disseminate their opinions through online forums and video sharing platforms, populations around the world have become heavily polarized on a number of cultural and political issues \cite{VANBAVEL2021913,Bessi_etal_16,Schmidt_etal_17,Stroud_08}. The asymmetry of the information from one source to another has solidified positions into ideological bubbles with little in common with each other. The formation of such bubbles occurs through a combination of cognitive biases that allow individuals to choose to listen only to opinions that conform to their preconceptions, as well as reject people who espouse views contrary to their own. 

Our collective need to understand the mechanisms behind polarization and propose ideas to reduce its negative effects has inspired much of the current literature in the social sciences \cite{VANBAVEL2021913,Bessi_etal_16,Schmidt_etal_17,Stroud_08, abramowitz2008polarization, gentzkow2016polarization, hetherington2009putting}. Simultaneously, researchers in the mathematical and physical sciences have proposed models that can capture and explain phenomena such as consensus and polarization, often focusing on one or two modeling features at a time. In this work, we analyze a model based on the classical DeGroot \cite{degroot1974reaching} and Friedkin-Johnsen \cite{friedkin1990social} models, that can simultaneously capture features such as:
\begin{enumerate}
\item {\em Confirmation bias:} the tendency to search for, interpret, favor, and recall information in a way that confirms or supports one's prior beliefs or values;
\item {\em Selective exposure:} individuals' tendency to favor information which reinforces their pre-existing views while avoiding contradictory information;
\item {\em Community structure:} when a network divides naturally into groups of nodes with dense connections internally and sparser connections between groups;
\item {\em Bots:} software applications that run automated tasks over the Internet on a large scale, usually with the intent of imitating human activity; 
\item {\em Multiple topics:} the analysis of the evolution of opinions on multiple (possibly correlated) topics at a time. 
\end{enumerate}

A single-topic version of this model was recently analyzed in \cite{fraiman2022opinion} on general complex networks without community structure. This work focused on sparse (i.e., bounded degree) graphs, where the graph's local behavior determines the evolution of the opinion process. Its  main results show the power that the media has in framing people's opinions, and how selective exposure alone can explain much of the polarization that we observe in many societies today. One of the main features of the model in \cite{fraiman2022opinion}, as well as of the model studied here, is that, due to its linearity, it produces computable formulas that can be used to quantify the effect of various model parameters on the evolution of opinions, and potentially, of any intervention one may want to propose. 

The current work studies a more general opinion model than the one analyzed in \cite{fraiman2022opinion} on sparse graphs,  but rather than focus on its explanatory/predictive power, it computes its limiting behavior on denser graphs with community structure. From a modeling point of view, it is often hard to decide whether a real-world network is sparse with a large average degree, or, say, semi-sparse, with an average degree that grows slowly with the size of the graph. Given the lack of precise boundaries, we believe it is interesting to understand the difference between sparse approximations and dense, or mean-field, approximations. At a high level, sparse approximations are determined by the entire local neighborhood of each vertex, while mean-field approximations are obtained by replacing the individual contribution of each neighbor with their average behavior, which is justified only when vertices in the graph have a sufficiently large number of neighbors.

Our main interest in this paper is to show that there is a single mean-field approximation for our opinion model that works for graphs with any density level other than sparse. However, the precision level of this approximation does exhibit a phase transition when the average degree crosses the familiar connectivity threshold $\log n$ (with $n$ the number of vertices in the graph), and seamlessly explains how the model's behavior transitions into the sparse regime, where local approximations such as those found in \cite{fraiman2022opinion} take over. In fact, our analysis of the semi-sparse regime is based on the same local approximation that is used for sparse graphs, which when the average degree grows with $n$ further simplifies to the mean-field approximation obtained in the dense case. The transition into the sparse regime can also be seen from the point of view of the trajectories for different individuals, which in the setting of this paper exhibit propagation of chaos (i.e., asymptotically independent trajectories). In contrast, on sparse graphs, only a weaker form of propagation of chaos holds, which states that only finite sets of uniformly chosen trajectories are asymptotically independent.  The in-depth analysis of the model's explanatory and predictive power will be covered in a separate set of work. 

This paper is organized as follows. In Section~\ref{model} we introduce our model for the evolution of opinions on a large complex network with community structure, as well as the random graph family used in our analysis. In Section~\ref{main_results} we present our main results, which give an approximation for the stationary opinion process that is valid for any density level of the underlying graph other than sparse. In Section~\ref{lit-review} we discuss related models for opinion dynamics, as well as the technical aspects of our work, such as mean-field approximations and local approximations for random graphs. Finally, in Section~\ref{S.Proofs} we give all the proofs for the theorems in Section~\ref{main_results}.

\section{The Model}\label{model}

Our opinion model consists of two parts: 1) the opinion process defined on a fixed directed graph, and 2) the random graph model used to represent the underlying social network. 

\subsection{The opinion process on a fixed graph}

Consider a directed graph $G  = (V, E)$, where individuals in the social network are represented by vertices in the set $V$, and a directed edge $(i,j) \in E$ means that individual $j$ listens to the opinion of individual $i$; $|V|$ denotes the cardinality of the set. Each individual in the set $V$ is assumed to belong to one of $K$ communities, and has a set of attributes that will remain fixed throughout the evolution of the opinion process. We assume we have $\ell$ topics, with $\ell$ a finite number, and individuals' opinions on each topic take values on the interval $[-1,1]$. The main object of study is the opinion process $\{R^{(k)} : k \geq 0\}$, taking values on $[-1,1]^{|V| \times \ell}$, where $R_{ij}^{(k)}$ denotes the expressed opinion of individual $i \in V$ on topic $j \in \{1, \dots, \ell\}$ at time $k \in \mathbb{N} =  \{0, 1, 2, \dots\}$. We will use $\mathbf{R}_i^{(k)}$ to denote the $i$th row of $R^{(k)}$, which corresponds to the multi-topic opinion vector of individual $i$ at time $k$. 

Before we describe the updating rule for the process $\{ R^{(k)}: k \geq 0\}$, we need to describe the vertex attributes that determine how each individual engages with its neighbors and incorporates their own prejudices. To start, each individual $i \in V$ has an attribute vector and an extended attribute vector of the form:
$$\mathbf{a}_i = (J_i, \mathbf{q}_i   )  \in \{ 1, \dots, K\} \times [-1,1]^{\ell} \quad \text{and} \quad \mathbf{y}_i = (J_i, \mathbf{q}_i , \mathbf{b}_i  )  \in \{ 1, \dots, K\} \times [-1,1]^{\ell} \times \mathbb{R}_+^{|V|},$$
where $J_i$ denotes the community to which individual $i$ belongs, $\mathbf{q}_i = (q_{i1}, \dots q_{i\ell})$ is a row vector of pre-conceived ideas or internal beliefs on each of the $\ell$ topics, and $\mathbf{b}_i = (b_{i1}, \dots, b_{i|V|})$ is a nonnegative row vector of ``unnormalized'' weights, where $b_{ij}$ is the weight that individual $i$ gives to individual $j$'s expressed opinions (these weights will later be normalized to add up to one). The extended attribute vectors $\{\mathbf{y}_i: i \in V\}$ do not change throughout the evolution of the process, reflecting the reasonable modeling assumption that they change at a much longer time-scale than the expressed opinions process. To make them easy to identify, all vectors are written in boldface. 

Given the set of edges $E$, we now construct the (normalized) weights according to:
\begin{equation} \label{eq:CommunityMatrix}
c_{ij} = \frac{b_{ij} 1(j \to i)}{\sum_{l \in V} b_{il} 1(l \to i)} 1\left(\sum_{l \in V} b_{il} 1(l \to i) > 0, \,  i\neq j \right), \qquad i,j \in V,
\end{equation}
where $j \to i$ indicates that $(j,i) \in E$. The model also requires two parameters, $c, d \in [0,1]$, such that $0 < c+d \leq 1$ and $d > 0$. The parameter $c$ controls the amount of value that individuals give to the social network, and $d$ controls the amount of weight they give to external influences, e.g., media and political leaders. Although one can easily make the parameters $c$ and $d$ depend on each individual, for simplicity we interpret them as average weights and keep them the same across all individuals. Our assumption that $d > 0$ is important both to the explanatory/predictive power of the model, and to its mathematical properties, as we will see later. 

Having described the attributes of each individual, we can now describe the evolution of the opinion process. At time zero, we initialize the process with a matrix $R^{(0)} \in [-1,1]^{|V| \times \ell}$ chosen according to some initial distribution $\mu_0$ such that $\{\mathbf{R}_i^{(0)} : J_i = r\}$ are identically distributed for each $1\leq r \leq K$. At each time step $k$, each individual $i$ listens to an external signal $\mathbf{W}_i^{(k)} \in [-1,1]^\ell$ (row vector), and updates their opinion according to:
\begin{equation}\label{eqn:recursion_of_opinions_autoregressive}
\mathbf{R}_i^{(k+1)} = c  \sum_{j \in V} c_{ij} \mathbf{R}_j^{(k)} + \mathbf{W}_i^{(k+1)} + (1-c-d) \mathbf{R}_i^{(k)}, \qquad k \geq 0, \, i \in V.
\end{equation}

The external signals $\{ \mathbf{W}_i^{(k)}: i \in V, k \geq 0\}$ are assumed to be independent of each other, and of the initial opinion matrix $R^{(0)}$. In order to include the possibility of having vertices with no inbound neighbors, it is convenient to assume that the external signals take the form:
\begin{equation} \label{eq:MediaSignals}
\mathbf{W}_i^{(k)} = d \mathbf{Z}_i^{(k)} + c \mathbf{q}_i 1\left( d_i^- = 0\right),
\end{equation}
where $d_i^- = \sum_{j \in V} 1( j \to i)$ is the number of inbound neighbors of vertex $i$ and $\{ \mathbf{Z}_i^{(k)}: k \geq 0\}$ can be interpreted as media signals. This form of the external signals replaces the contribution of the neighbors with the vertex's internal belief when it has no inbound neighbors. For a vertex $i \in V$ having attribute vector $\mathbf{a}_i$, the media signals $\{ \mathbf{Z}_i^{(k)}: k \geq 0\}$ are assumed to be i.i.d.~with some distribution $\nu(\mathbf{a}_i)$. If needed, one could make $\nu$ depend on the extended attribute vector of each vertex, and even add more vertex properties to the extended attribute vector, but for simplicity we omit this extension here. We use $W^{(k)}$ to denote the $|V| \times \ell$ matrix whose $i$th row is $\mathbf{W}_i^{(k)}$. 

Under the assumption that $d > 0$, we will see that the process $\{ R^{(k)}: k \geq 0\}$ has a stationary distribution, i.e., $R^{(k)} \Rightarrow R$ as $k \to \infty$, where $\Rightarrow$ denotes weak convergence on $[-1,1]^{|V| \times \ell}$. This paper focuses on the behavior of both the trajectories $\{ \mathbf{R}_i^{(k)}: k \geq 0\}$ and the stationary opinion matrix $R$. 

\subsection{The network}\label{subsection_network}

Although the opinion process itself is defined on a fixed directed graph, our results are based on analyzing the process on a random graph sequence $\{ G_n: n \geq 1\}$, where $G_n = (V_n, E_n)$. From a modeling point of view, one can think of the fixed social network of interest as simply one realization from the random graph model being used. For simplicity, we assume $|V_n| = n$. 

In order to model the community structure and individual-level confirmation bias, we use as the basis for our model a directed stochastic block model (dSBM) with $K$ communities. Each individual in $i \in V_n$ has a community label $J_i$, and given the community labels $\mathscr{J}_n = \{ J_i: i \in V_n\}$,  edge $(i,j)$ is present in the graph with probability:
\begin{equation} \label{eq:EdgeProb}
p_{ij}^{(n)} = \frac{\kappa(J_i, J_j) \theta_n}{n} \wedge 1, \qquad i,j \in V_n,
\end{equation}
independently of everything else, where $\kappa$ is a nonnegative kernel (equivalently, a nonnegative matrix in $\mathbb{R}^{K \times K}$), and $\theta_n$ is a density parameter. 

Since our interest in this paper is to obtain approximations for semi-sparse to dense graphs, it is not important to incorporate degree-corrections into our model other than those produced by the kernel $\kappa$. In the sparse setting, where one can closely model the inhomogeneity of the network (e.g., scale-free degree distributions), using a degree-corrected dSBM would be more appropriate. 

The proportion of vertices belonging to each community is controlled by the probability vector $(\pi_1^{(n)}, \dots, \pi_K^{(n)})$, so that a proportion $\pi_r^{(n)}$ of the vertices in $V_n$ belong to community $r$. The main assumption on the distribution of the community labels is given below. 

\begin{assum}{A}{}  \label{regularity_condition_communities}
There exists a probability vector $\boldsymbol{\pi} = (\pi_1, \dots, \pi_K)$ with $\pi_r > 0$ for all $1\leq r \leq K$, such that 
$$\pi_r^{(n)} = \frac{1}{n} \sum_{i=1}^n 1( J_i = r) \xrightarrow{P} \pi_r,$$
as $n\to\infty$,
for each $r \in \{1, \dots, K\}$. 
\end{assum}

Next, for each $r \in \{1, \dots, K\}$ let $F_r$ be a distribution on $[-1,1]^\ell$. For each vertex $i \in V_n$ sample its internal belief vector $\mathbf{Q}_i$ according to $F_{J_i}$, where $J_i$ is the community label of vertex $i$. 

Finally, to complete the description of the model on a random graph, we sample each of the unnormalized weights $\{ B_{ij}: i,j \in V_n\}$ conditionally independent of each other given the community labels $ \{ J_i: i \in V_n\}$, with $B_{ij}$ sampled from a distribution $G_{J_i,J_j}$ on the interval $[0, H]$, with $H$ a finite constant. Note that the distribution of the unnormalized weights is allowed to depend on the communities to which each vertex belongs, which means one can give less weight to opinions coming from members of a different community. This is a form of \textit{confirmation bias} \cite{mynatt1977confirmation, nickerson1998confirmation, del2017modeling}, applied to the individual rather than the opinion itself. 

\begin{remarks}
We end this section by explaining how one can choose the various parameters to model many of the features that social scientists believe influence the evolution of opinions. Since the full explanatory and predictive power of the model will be discussed in a future companion paper, we give here only a high-level description of what our model can be used for.  
\begin{enumerate}
\item The kernel $\kappa$ and the unnormalized weights $\{ B_{ij}: i,j \in V_n\}$ can be used to model a form of confirmation bias that affects the individuals directly, i.e., by both controlling the number of neighbors that an individual has who belong to communities different from their own, and by discounting their opinions. Note that this form of confirmation bias depends on neither the topic nor the specific expressed opinions being shared at any point. Other ways of modeling confirmation bias directly applied to the expressed opinions being shared are discussed in Section~\ref{lit-review}.

\item A direct way in which our model incorporates selective exposure is through the choice of the distribution $\nu(\mathbf{A}_i)$ for the media signals, where $\mathbf{A}_i = (J_i, \mathbf{Q}_i)$. For example, if we believe that each community has its own preferred media outlets, e.g., news and political commentators, then we can choose the distribution $\nu$ to depend on the community label $J_i$. Moreover, if we want to model the human tendency to reject information that contradicts our internal beliefs or pre-existing views, we can do so by allowing $\nu$ to depend on the internal belief vector $\mathbf{Q}_i$ (e.g., a person who listens to an outlet that has extreme views on topics 1 and 2, but only agrees with the outlet on topic 1, may choose to ignore the media signal for topic 2). 

\item The dSBM can also be used to model the presence of bots, targeted or not, by creating additional communities. A targeted bot chooses who to send its messages to based on their community label. Since bots are not real people, they are not influenced by what they hear from their inbound neighbors, so we can model them to have no inbound neighbors by adjusting the kernel $\kappa$ accordingly. 

\end{enumerate}
\end{remarks}

\section{Main Results}\label{main_results} 

We are now ready to present the main results of the paper. We assume from here onwards that we are working on a probability space $(\Omega, \mathcal{F}, P)$ where we can construct the entire random graph sequence $\{ G_n: n \geq 1\}$ along with their vertex marks, as well as all the external signals needed to define the opinion process on each of the graphs in the sequence. We use $E[\cdot]$ to denote the corresponding expectation. The opinion process $\{ R^{(k)}: k \geq 0\}$ is constructed on a fixed random graph $G_n = (V_n, E_n)$ from the sequence, but for simplicity the dependence on $n$ is not explicit in the notation.

As mentioned earlier, our work shows that there is a unique approximation for the opinion process $\{ R^{(k)}: k \geq 0\}$ under the large graph limit $n \to \infty$, that works for all choices of the density parameter $\theta_n$. The phase transition occurs only in the precision level of the approximation, around the threshold $\theta_n = \log n$. Interestingly, the proof technique is also significantly different above and below the threshold $\log n$, reflecting the fact that as the density parameter $\theta_n$ drops below the threshold, a vertex's entire neighborhood becomes increasingly important. To be able to drop the minimum in \eqref{eq:EdgeProb} and simplify our results, we assume throughout the paper that 
$$\limsup_{n \to \infty} \max_{1 \leq r,s \leq K}  \frac{\kappa(r,s) \theta_n}{n} \leq 1.$$

The process $\{ R^{(k)}: k \geq 0\}$ will be approximated by another process $\{ \mathcal{R}^{(k)}: k \geq 0\}$ whose main characteristic is that its rows $\{ \boldsymbol{\mathcal{R}}_i^{(k)}: k \geq 0, i \in V_n \}$ are conditionally independent of each other given the community labels. We will now describe the approximating process, leaving an intuitive derivation of how this process is obtained to Section~\ref{S.Proofs}. 

To start, define the matrix $M \in [0,1]^{K \times K}$ whose $(r,s)$th component is given by
\begin{equation}\label{average_matrix_M}
     m_{rs} = \dfrac{\pi_s \beta_{r,s}\kappa(s,r)}{\pi_1 \beta_{r,1} \kappa(1,r)+\dots+ \pi_K \beta_{r,K} \kappa(K,r)} \cdot 1\left(\pi_1 \beta_{r,1} \kappa(1,r)+\dots+ \pi_K \beta_{r,K} \kappa(K,r) > 0 \right) ,
\end{equation}
where $\beta_{r,s} = E[ B_{ij} | J_i = r, J_j = s]$. Note that since one may want bot communities to have no inbound neighbors, $M$ may have zero rows. Define also $v_{r,s} = E[B_{ij}^2 | J_i=r, J_j=s]$. Recall that the media signals, given by $\{ \mathbf{Z}_{i}^{(k)}: k \geq 0, i \in V_n\}$, are independent of each other, with $\{ \mathbf{Z}_i^{(k)}: k \geq 0\}$ conditionally i.i.d.~given $\mathbf{A}_i$, with distribution $\nu(\mathbf{A}_i)$. The external signals $\{ \mathbf{W}_{i}^{(k)}: k \geq 0, i \in V_n\}$ follow the form in \eqref{eq:MediaSignals}. We will use $\bar W \in [-1,1]^{K\times \ell}$ to denote the matrix whose $r$th row is $E[ \mathbf{W}_i^{(0)} | J_i = r]$, and  $\bar R \in [-1,1]^{K \times \ell}$ to denote the matrix whose $r$th row is $E[ \mathbf{R}_i^{(0)} | J_i = r]$. 

The approximating process $\{ \mathcal{R}^{(k)}: k \geq 0\}$ is given by: $\mathcal{R}^{(0)} = R^{(0)}$ and
\begin{equation} \label{eq:Limit}
\boldsymbol{\mathcal{R}}_i^{(k)} = \sum_{t=0}^{k-1} (1-c-d)^t \mathbf{W}_i^{(k-t)} + 1( k \geq 2) \sum_{t=1}^{k-1} \sum_{s=1}^t a_{s,t} ( M^s \bar W)_{J_i \bullet} + \sum_{s=1}^k a_{s,k} ( M^s \bar R)_{J_i \bullet} + (1-c-d)^k \mathbf{R}_i^{(0)}, 
\end{equation}
for $k \geq 1$ and $i \in V_n$, where $a_{s,t} = \binom{t}{s} (1-c-d)^{t-s} c^s$ and $H_{j \bullet }$ denotes the $j$th row of matrix $H$ ($H_{\bullet j}$ denotes the $j$th column). Note that conditionally on the vertex attributes $\{ \mathbf{A}_i: i \in V_n\}$, \eqref{eq:Limit} takes the form of an autoregressive process of order one, without any interactions among different vertices. Before stating the main theorems, we will need a few additional definitions. 

For a vector $\mathbf{x}\in\mathbb{R}^n$, we use the standard $l_p$-norm $\|\cdot\|_p$, $p \geq 1$, namely,
$$\| \mathbf{x} \|_p := \left( \sum_{i=1}^n |x_i|^p \right)^{1/p}\quad\text{and}\quad\| \mathbf{x} \|_\infty := \max_{1\leq i\leq n} |x_i|. $$
Applied to a matrix $A\in\mathbb{R}^{n\times\ell}$, $\| \cdot \|_p$ is the induced norm 
$$\| A \|_p := \sup_{\| \mathbf{x} \| \neq 0} \frac{\| A  \mathbf{x} \|_p}{\| \mathbf{x} \|_p}.$$

We will also use the notation 
\[ \mu_r^{(n)} = \sum_{s=1}^K \beta_{r,s} \pi_s^{(n)} \kappa(s,r)  \qquad \text{and} \qquad \nu_r^{(n)} = \sum_{s=1}^K v_{r,s} \pi_s^{(n)} \kappa(s,r) , \]
as well as $$\Delta_n = \max_{r \in \mathcal{I}}\frac{\nu_r^{(n)}}{(\mu_r^{(n)})^2} \qquad\text{and}\qquad \Lambda_n = \max_{r \in \mathcal{I}} \frac{\mu_r^{(n)}}{\nu_r^{(n)}},$$
where $\mathcal{I} = \{ r \in \{1, \dots, K\}: \sum_{s=1}^K m_{rs} > 0 \}$ is the set of non-zero rows of $M$. Note that both $\Delta_n$ and $\Lambda_n$ converge to positive constants as $n \to \infty$ under Assumption~\ref{regularity_condition_communities}. We use $\mathbb{E}_n [ \cdot] = E[ \cdot | \mathscr{J}_n]$ to denote the conditional expectation given the community labels, and $\mathbb{P}_n$ to denote its corresponding conditional probability. The first main result of the paper is given below.

\begin{theorem}\label{MFA_generalthm}
 Define the process $\{\mathcal{R}^{(k)}: k \geq 0\}$ according to \eqref{eq:Limit}. Suppose $\theta_n \geq (6H \Lambda_n)^2 \Delta_n \log n$. Then, there exists a constant $\Gamma < \infty$ such that 
    \begin{equation}\label{convergence_of_infinity_norm}
        \sup\limits_{k\geq0} \, \mathbb{E}_n\left[ \left\| R^{(k)}-\mathcal{R}^{(k)}\right\|_\infty \right] \leq  \Gamma \left( \sqrt{\frac{\log n}{\theta_n}}+ \mathcal{E}_n \right),
    \end{equation}    
where $\mathcal{E}_n := \max_{1\leq r,s \leq K} \left| \frac{\pi_s^{(n)} \pi_r - \pi_s \pi_r^{(n)}}{\pi_r^{(n)} \pi_s} \right|$. 
    Moreover, for any sequence $\theta_n$ satisfying $\theta_n\rightarrow\infty$ as $n\rightarrow\infty$, 
    \begin{equation}\label{convergence_of_1_norm}
       \sup_{k \geq 0} \, \max_{i \in V_n} \mathbb{E}_n\left[ \left\|\mathbf{R}_i^{(k)} - \boldsymbol{\mathcal{R}}_i^{(k)} \right\|_1 \right]  \xrightarrow{P} 0,
    \end{equation}
    as $n\to\infty$. 
\end{theorem}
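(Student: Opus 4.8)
The plan is to reduce everything to a single concentration estimate for the community-aggregated weights and then to propagate it through the powers of the weight matrix by a short induction. Writing the recursion \eqref{eqn:recursion_of_opinions_autoregressive} in matrix form as $R^{(k+1)} = A R^{(k)} + W^{(k+1)}$ with $A = cC + (1-c-d)I$ (where $C = (c_{ij})$), and using $\|A\|_\infty \le 1-d < 1$ to unroll, the binomial expansion $A^k = \sum_{s=0}^k a_{s,k} C^s$ shows that $R^{(k)} = \sum_{s=0}^k a_{s,k} C^s R^{(0)} + \sum_{t=0}^{k-1}\sum_{s=0}^t a_{s,t} C^s W^{(k-t)}$. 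The $s=0$ contributions are exactly the ``self'' terms $(1-c-d)^k \mathbf{R}_i^{(0)}$ and $\sum_{t}(1-c-d)^t\mathbf{W}_i^{(k-t)}$ appearing in \eqref{eq:Limit}, so $\mathbf{R}_i^{(k)} - \boldsymbol{\mathcal{R}}_i^{(k)}$ is a weighted sum, with weights $a_{s,t}$, of the error vectors $\eta_i^{(s),X} := (C^s X)_{i\bullet} - (M^s \bar X)_{J_i\bullet}$ for $X \in \{R^{(0)}, W^{(k-t)}\}$ and $s \ge 1$. Since $\sum_{s=1}^k a_{s,k}\, s = ck(1-d)^{k-1}$ and $\sum_{t\ge1}\sum_{s=1}^t a_{s,t}\, s = c/d^2$ are both finite uniformly in $k$, it suffices to prove a bound on $\eta^{(s)}$ that grows at most linearly in $s$; the supremum over $k$ then comes for free.

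The core estimate concerns the total normalized weight $\hat c_{ir} := \sum_{j: J_j = r} c_{ij}$ that vertex $i$ places on community $r$. Both its numerator $\sum_{j: J_j=r} B_{ij}1(j\to i)$ and its denominator $D_i = \sum_{l} B_{il}1(l\to i)$ are sums of order $\theta_n$ independent bounded terms, with conditional means $\theta_n \beta_{J_i,r}\kappa(r,J_i)\pi_r^{(n)}$ and $\theta_n \mu_{J_i}^{(n)}$; the ratio of these means is $m_{J_i,r}^{(n)} := \beta_{J_i,r}\kappa(r,J_i)\pi_r^{(n)}/\mu_{J_i}^{(n)}$, the finite-$n$ version of $M$. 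A Bernstein bound for numerator and denominator, together with the hypothesis $\theta_n \ge (6H\Lambda_n)^2\Delta_n \log n$ (which keeps $D_i \ge \tfrac12\mathbb{E}_n[D_i]$ uniformly after a union bound over the $n$ vertices, the quantities $\Lambda_n,\Delta_n$ being exactly the mean/variance ratios entering the exponent), gives a good event $\mathcal{G}$, of probability $1 - o(1)$ under $\mathbb{P}_n$, on which $\max_i \max_{1\le r\le K} |\hat c_{ir} - m_{J_i,r}^{(n)}| \lesssim \sqrt{\log n/\theta_n}$. Replacing $m^{(n)}$ by $m$ costs $\mathcal{E}_n$. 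Crucially, the same event $\mathcal{G}$ controls $\sum_j c_{ij} Z_{J_j\bullet}$ for every deterministic community-constant matrix $Z$ at once, since this equals $\sum_r \hat c_{ir} Z_{r\bullet}$.

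For the first statement I would then run an induction on $s$ in the $\|\cdot\|_\infty$ (maximum-row) norm. Decomposing $C^{s}X = C(C^{s-1}X)$ and writing $(C^{s-1}X)_{j\bullet} = (M^{s-1}\bar X)_{J_j\bullet} + \eta_j^{(s-1)}$, the community-constant target $M^{s-1}\bar X$ is deterministic given $\mathscr{J}_n$, so the estimate of the previous paragraph applies to it verbatim and yields $\sum_j c_{ij}(M^{s-1}\bar X)_{J_j\bullet} = (M^s\bar X)_{J_i\bullet} + O(\sqrt{\log n/\theta_n} + \mathcal{E}_n)$ on $\mathcal{G}$, uniformly in $i$ and $s$. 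The remaining term is controlled by sub-stochasticity, $\max_i\|\sum_j c_{ij}\eta_j^{(s-1)}\|_1 \le \max_j \|\eta_j^{(s-1)}\|_1$, because $c_{ij}\ge0$ and $\sum_j c_{ij}\le 1$. This gives $\max_i\|\eta_i^{(s)}\|_1 \le \max_i\|\eta_i^{(s-1)}\|_1 + C'(\sqrt{\log n/\theta_n}+\mathcal{E}_n)$, hence the desired linear-in-$s$ bound; for the base case $s=1$ one additionally centers the genuinely random input by writing $X_{j\bullet} = \bar X_{J_j\bullet} + (X_{j\bullet}-\bar X_{J_j\bullet})$ and bounding the weighted fluctuation through $\sum_j c_{ij}^2 \lesssim 1/\theta_n$. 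Summing against the $a_{s,t}$ and integrating over $\mathcal{G}$ (its complement contributes at most $O(\mathbb{P}_n(\mathcal{G}^c))$ since all opinions are bounded) produces \eqref{convergence_of_infinity_norm}.

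For the second statement the $\|\cdot\|_\infty$ estimate is unavailable, because below the connectivity threshold some vertices have anomalously few in-neighbors and $D_i$ need not concentrate; instead I would prove the per-vertex $L_1$ bound. Each $\hat c_{ir}\in[0,1]$ is bounded, and when $\theta_n\to\infty$ the law of large numbers gives $\hat c_{ir}\xrightarrow{P} m_{J_i,r}$ for each fixed $i$, so bounded convergence yields $\mathbb{E}_n[|\hat c_{ir}-m_{J_i,r}|]\to0$; together with Assumption~\ref{regularity_condition_communities} (which forces $\mathcal{E}_n\xrightarrow{P}0$) this handles the community-constant target at every level. The delicate point, and the main obstacle, is the propagation term $\sum_j c_{ij}\eta_j^{(s-1)}$: in $L_1$ one cannot replace $\mathbb{E}_n\|\eta_j^{(s-1)}\|_1$ by its maximum, and $c_{ij}$ and $\eta_j^{(s-1)}$ are not independent. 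I would resolve this using the directed structure: $\eta_j^{(s-1)}$ is a function of the $(s-1)$-step in-neighborhood of $j$, whereas $c_{ij}$ depends only on the in-edges and in-weights of $i$, and since in- and out-edges are sampled independently these decouple unless $i$ lies in the in-neighborhood of one of its own in-neighbors, i.e.\ unless a short directed cycle through $i$ is present. Such cycles have vanishing probability precisely in the sub-threshold regime (the complementary regime being already covered by the first statement), so conditioning on the in-edges of $i$ separates $c_{ij}$ from $\eta_j^{(s-1)}$ up to a $o(1)$ correction, giving $\mathbb{E}_n\|\eta_i^{(s)}\|_1 \le \max_j \mathbb{E}_n\|\eta_j^{(s-1)}\|_1 + o(1)$ and, by induction and summation, the claimed convergence in probability. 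This short-cycle, or local-tree, control, which ties the semi-sparse analysis to the local approximation used on sparse graphs, is where the real work lies.
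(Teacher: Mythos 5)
Your treatment of the first statement is essentially the paper's own argument in different packaging: the paper likewise passes through a finite-$n$ mean matrix ($\tilde M$, then $\breve M$, with the replacement by $M$ costing exactly $\mathcal{E}_n$, as in Theorem~\ref{T.interm_meanfield}), reduces the powers $C^s$ to one-step errors using $\|C\|_\infty\le 1$ (its telescoping identity plays the role of your induction), and proves the one-step concentration by Chernoff bounds on ratios of random sums with a union bound over vertices (Proposition~\ref{P.NewConcentration}, Lemma~\ref{L.Ratios}, Theorem~\ref{T.InfinityNorms}). Two quantitative points in your sketch need tightening, both fixable with tools you already invoke: for the genuinely random input at $s=1$, the variance bound $\sum_j c_{ij}^2\lesssim 1/\theta_n$ gives a per-vertex rate $\theta_n^{-1/2}$, but \eqref{convergence_of_infinity_norm} has the maximum over all $n$ vertices inside the expectation, so this fluctuation term also needs exponential (Bernstein-type) tails and a union bound, not just a second-moment bound; and you must verify that $\mathbb{P}_n(\mathcal{G}^c)$ is itself $O(\sqrt{\log n/\theta_n})$ over the entire admissible range of $\theta_n$, which is why the paper does a case split at $\theta_n=(\log n)^3$ in the proof of Theorem~\ref{T.InfinityNorms}.

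For the second statement your route genuinely differs from the paper's, and this is where the gaps are. The paper couples each vertex's in-neighborhood with a marked multi-type Galton--Watson tree (Theorem~\ref{T.Coupling}) and proves the recursion $\|\mathbf{a}_s\|_\infty\le\|\mathbf{a}_{s-1}\|_\infty+\Gamma'\theta_n^{-1/2}$ on the tree, where all independence holds by construction (Lemma~\ref{L.OneGeneration}, Corollary~\ref{C.Recursive}); coupling failures are controlled on average and transferred to the maximum over $i$ by exchangeability. You propose instead to decouple $c_{ij}$ from $\eta_j^{(s-1)}$ directly via the rarity of short directed cycles, which would avoid importing the coupling machinery. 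The phenomenon you identify is right, but two steps constitute genuine gaps. First, the conditioning is stated in the wrong direction and hides the main difficulty: the event ``no short directed cycle through $i$'' is itself a function of the edges, so you cannot condition on it and then treat $c_{ij}$ and $\eta_j^{(s-1)}$ as independent for free. A correct version explores the in-neighborhood of $j$ to depth $s-1$ (revealing exactly the edge indicators and weights with heads in that neighborhood), observes that the overlap event $\{i\in N^{\mathrm{in}}_{s-2}(j)\}$ is measurable with respect to this exploration, and that on its complement none of the variables determining $c_{ij}$ have been queried; this adaptive-exploration/optional-stopping bookkeeping is precisely what the paper's tree coupling packages for you. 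Second, your per-$s$ error contains the probability of a directed cycle through $i$ of length at most $s-1$, which is of order $\theta_n^{s}/n$ and grows with $s$; since $\theta_n\to\infty$, this cannot be summed against the weights $a_{s,t}$ uniformly in $k$, so ``by induction and summation'' does not close the argument. You need a truncation at a fixed depth $s_0$, using the geometric tail of $\sum_{t}\sum_{s>s_0}a_{s,t}$ together with the trivial bound $\|\eta^{(s)}\|_1\le 2$ for $s>s_0$; the paper implements exactly this through the geometric random variable $T$ and bounded convergence at the end of Section~\ref{SS.SemiSparse}. Finally, the maximum over $i\in V_n$ in \eqref{convergence_of_1_norm} should be reduced to a maximum over the $K$ communities by exchangeability, as the paper does; your pointwise-in-$i$ law-of-large-numbers statements do not by themselves deliver that uniformity.
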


\begin{remarks} \label{R.Density}
\begin{enumerate}
\item Since $\max_{i \in V_n} \mathbb{E}_n\left[ \left\|\mathbf{R}_i^{(k)} - \boldsymbol{\mathcal{R}}_i^{(k)} \right\|_1 \right] \leq \mathbb{E}_n\left[ \left\| R^{(k)}-\mathcal{R}^{(k)}\right\|_\infty \right]$,  Theorem~\ref{MFA_generalthm} shows that the approximation is stronger when $\theta_n/\log n \to \infty$, and it gradually weakens as the rate at which $\theta_n$ grows drops below the critical rate $\log n$. Intuitively, this can be explained by noting that the average number of neighbors that any vertex has grows with the density parameter $\theta_n$. The larger the number of neighbors, the more their aggregate contributions behave as the average opinion.  The weakest result is valid for any $\theta_n \to \infty$, regardless of how slow the growth is. However, it is worth pointing out that when $\theta_n$ is bounded, the approximation fails, and the correct  approximation is the {\em sparse} approximation, which is determined by the local neighborhood of each vertex (see Theorem~2 in \cite{fraiman2022opinion}).  The threshold $\log n$ determining the strength of the approximation also appears in \cite{Bud_Muk_Wu_19, Bha_Bud_Wu_19} and \cite{avrachenkov2018mean}, where the authors study other processes on graphs with various edge density levels.

\item For each $i \in V_n$, the sequence $\{ \textbf{R}_i^{(k)}: k \geq 0\}$ is called the {\em trajectory} of vertex $i$. Since the rows in the limiting process $\{ \mathcal{R}^{(k)}: k \geq 1\}$ are independent of each other, Theorem~\ref{MFA_generalthm} yields that the trajectories of the process $\{ R^{(k)}: k \geq 0\}$ are asymptotically independent; in other words, the process exhibits {\em propagation of chaos} (note that only particles in the same community are exchangeable). Since
$$  \frac{1}{n} \sum_{i=1}^n \sup_{k \geq 0} \, \mathbb{E}_n\left[ \left\| \mathbf{R}_i^{(k)} - \boldsymbol{\mathcal{R}}^{(k)}_i \right\|_1 \right]  \leq \sup_{k \geq 0} \max_{i \in V_n} \mathbb{E}_n\left[ \left\|\mathbf{R}_i^{(k)} - \boldsymbol{\mathcal{R}}_i^{(k)} \right\|_1 \right] \xrightarrow{P} 0, \qquad n \to \infty,$$
the second statement in Theorem~\ref{MFA_generalthm} implies pointwise propagation of chaos in the sense of Definition~4.1 in \cite{chaintron2022propagationapplications}, throughout the entire range of $k$. We point out that this is not the case in the sparse regime, where the local approximation given by Theorem~2 in \cite{fraiman2022opinion} corresponds to a process whose trajectories are not independent of each other, and the trajectories of vertices close to each other may in fact have significant dependence. 

\item Note that since $\left\| R^{(k)} - \mathcal{R}^{(k)} \right\|_\infty$ is bounded, Theorem~\ref{MFA_generalthm} implies that for $\log n/\theta_n \to 0$, we have that for each $k \geq 0$, 
\[ \left\| R^{(k)} - \mathcal{R}^{(k)} \right\|_\infty \xrightarrow{P} 0, \qquad n \to \infty. \]
\end{enumerate}
\end{remarks}

An alternative way of understanding the approximation is to look at the empirical measure
$$\frac{1}{n} \sum_{i=1}^n 1( \textbf{R}_i^{(0)}, \dots, \textbf{R}_i^{(k)}) \in A),$$
for any measurable set $A \subseteq ([-1,1]^\ell)^{k+1}$. Written in this way, one can identify this measure as the conditional law of a {\em typical opinion}, understood as the trajectory of a uniformly chosen vertex $I_n \in V_n$, given the underlying graph $G_n = (V_n, E_n)$ and the entire process $\{ \mathcal{R}^{(t)}: 0 \leq t \leq k\}$. In our model, the distribution of the typical trajectory converges as $n \to \infty$ to a law that depends only on the community label. Since in the random graph literature the typical vertex converges to the distribution of the root of a rooted graph (usually a branching tree), the notation we use in the following theorem is consistent with that used in the sparse regime, where $\emptyset$ denotes the root of the limiting graph. We point out that in this paper, the limiting graph is not locally finite, so the notion of local weak convergence (see \cite{van2024random2}) does not apply.  


\begin{theorem}\label{thm_LWCprob}
Fix $k \geq 0$ and define the random variables $( \{ \boldsymbol{\mathcal{R}}_\emptyset^{(m)}: 0 \leq m\leq k\}, \mathcal{J}_\emptyset)$ according to:
\begin{align*}
P(\mathcal{J}_\emptyset = s) &= \pi_s, \qquad 1 \leq s \leq K, \\
P\left( \left. (\boldsymbol{\mathcal{R}}_\emptyset^{(0)}, \boldsymbol{\mathcal{R}}_\emptyset^{(1)}, \dots, \boldsymbol{\mathcal{R}}_\emptyset^{(k)}) \in A \right| \mathcal{J}_\emptyset = s \right) &=
\mathbb{P}_n\left(  (\boldsymbol{\mathcal{R}}_i^{(0)}, \boldsymbol{\mathcal{R}}_i^{(1)}, \dots, \boldsymbol{\mathcal{R}}_i^{(k)}) \in A  \right), \qquad A \subseteq ([-1,1]^\ell)^{k+1} , 
\end{align*}
where $\{ \boldsymbol{\mathcal{R}}_i^{(k)}: k \geq 0\}$ is given by \eqref{eq:Limit} and $i$ is any vertex belonging to community $s$. Define $V_{k,i} \in [-1,1]^{\ell \times (k+1)}$ to be the matrix having $j$th column $(V_{k,i})_{\bullet j} = (\mathbf{R}_i^{(j-1)})^\top$, and let $\mathcal{V}_k \in [-1,1]^{\ell \times (k+1)}$ be the matrix having $j$th column $(\mathcal{V}_k)_{\bullet j} = (\boldsymbol{\mathcal{R}}_\emptyset^{(j-1)})^\top$. Then, 
for any $1\leq r \leq K$ and any bounded and continuous (with respect to the $\| \cdot \|_1$ operator norm) function $f: [-1,1]^{\ell \times (k+1)} \to \mathbb{R}$, we have 
    \begin{equation*}
        \dfrac{1}{n}\sum_{i=1}^n f\left( V_{k,i} \right) 1(J_i = r) \stackrel{P}{\rightarrow} E\left[f\left( \mathcal{V}_k \right) 1(\mathcal{J}_\emptyset = r) \right]
    \end{equation*}
    as $n\rightarrow\infty$.  Furthermore,  for any arbitrary collection of vertices $\{i_1, \dots, i_m\} \subseteq V_n$ having community labels $\{r_1, \dots, r_m\}$, $m \geq 1$, and any set of continuous (with respect to $\| \cdot \|_1$) bounded functions $\{ f_1, \dots, f_m\}$ on $[-1,1]^{\ell \times (k+1)}$, $k \geq 0$, we have
     \[ \mathbb{E}_n\left[ \prod\limits_{j=1}^m f_j\left( V_{k,i_j} \right) \right] \xrightarrow{P} \prod\limits_{j=1}^m E\left[ \left. f_j \left( \mathcal{V}_k \right) \right| \mathcal{J}_\emptyset = r_j \right], \qquad n \to \infty. \]       
\end{theorem}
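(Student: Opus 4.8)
The plan is to transfer everything from the true opinion process to the conditionally independent approximating process using Theorem~\ref{MFA_generalthm}, and then to exploit the conditional independence (and within-community identical distribution) of the rows $\{\boldsymbol{\mathcal{R}}_i^{(k)}\}$ given $\mathscr{J}_n$. Write $\hat V_{k,i}$ for the matrix built from the approximating trajectory, i.e.\ the matrix whose $j$th column is $(\boldsymbol{\mathcal{R}}_i^{(j-1)})^\top$, so that $\mathcal{V}_k$ is the $\emptyset$-version of $\hat V_{k,i}$. Since the induced norm satisfies $\|V_{k,i}-\hat V_{k,i}\|_1=\max_{0\le m\le k}\|\mathbf{R}_i^{(m)}-\boldsymbol{\mathcal{R}}_i^{(m)}\|_1\le\sum_{m=0}^k\|\mathbf{R}_i^{(m)}-\boldsymbol{\mathcal{R}}_i^{(m)}\|_1$, the second statement of Theorem~\ref{MFA_generalthm} gives $\max_{i\in V_n}\mathbb{E}_n[\|V_{k,i}-\hat V_{k,i}\|_1]\xrightarrow{P}0$ for any fixed $k$ whenever $\theta_n\to\infty$.

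First I would carry out the replacement of $V_{k,i}$ by $\hat V_{k,i}$ inside $f$. Because $f$ is bounded and continuous on the compact set $[-1,1]^{\ell\times(k+1)}$, it is uniformly continuous, so for any $\epsilon>0$ there is $\delta>0$ with $\|A-B\|_1<\delta\Rightarrow|f(A)-f(B)|<\epsilon$; hence $|f(V_{k,i})-f(\hat V_{k,i})|\le\epsilon+2\|f\|_\infty\,1(\|V_{k,i}-\hat V_{k,i}\|_1\ge\delta)$, and taking $\mathbb{E}_n$ together with Markov's inequality gives, for every $i$,
\[
\mathbb{E}_n\!\left[|f(V_{k,i})-f(\hat V_{k,i})|\right]\le\epsilon+\frac{2\|f\|_\infty}{\delta}\,\mathbb{E}_n[\|V_{k,i}-\hat V_{k,i}\|_1].
\]
Since the last expectation is at most $\max_{i\in V_n}\mathbb{E}_n[\|V_{k,i}-\hat V_{k,i}\|_1]\xrightarrow{P}0$ uniformly in $i$, the error incurred by replacing $V$ with $\hat V$ tends to $0$ in probability, both after averaging over $i$ (first statement) and, via a telescoping product bound $|\prod_j f_j(A_j)-\prod_j f_j(B_j)|\le C\sum_j|f_j(A_j)-f_j(B_j)|$, inside the finite product over the fixed vertices $i_1,\dots,i_m$ (second statement). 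This reduces both claims to the approximating process.

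For the first statement I would then apply a conditional weak law of large numbers. Restricting to community $r$, the variables $\{f(\hat V_{k,i}):J_i=r\}$ are conditionally independent given $\mathscr{J}_n$, bounded by $\|f\|_\infty$, and conditionally identically distributed with common mean $g_r^{(n)}:=\mathbb{E}_n[f(\hat V_{k,i})]$; bounding the conditional variance of $\tfrac1n\sum_{i:J_i=r}f(\hat V_{k,i})$ by $\|f\|_\infty^2/n$ and invoking conditional Chebyshev yields $\tfrac1n\sum_{i:J_i=r}f(\hat V_{k,i})-\pi_r^{(n)}g_r^{(n)}\xrightarrow{P}0$. By the definition of $(\boldsymbol{\mathcal{R}}_\emptyset,\mathcal{J}_\emptyset)$ one has $g_r^{(n)}=E[f(\mathcal{V}_k)\mid\mathcal{J}_\emptyset=r]$, while Assumption~\ref{regularity_condition_communities} gives $\pi_r^{(n)}\xrightarrow{P}\pi_r$; since $g_r^{(n)}$ is bounded, $\pi_r^{(n)}g_r^{(n)}\to\pi_r\,E[f(\mathcal{V}_k)\mid\mathcal{J}_\emptyset=r]=E[f(\mathcal{V}_k)1(\mathcal{J}_\emptyset=r)]$, which gives the claim. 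The second statement is then immediate from conditional independence: for distinct vertices the rows $\hat V_{k,i_1},\dots,\hat V_{k,i_m}$ are conditionally independent given $\mathscr{J}_n$, so $\mathbb{E}_n[\prod_j f_j(\hat V_{k,i_j})]=\prod_j\mathbb{E}_n[f_j(\hat V_{k,i_j})]=\prod_j E[f_j(\mathcal{V}_k)\mid\mathcal{J}_\emptyset=r_j]$ exactly, and the replacement step controls the error from passing back to the $V_{k,i_j}$.

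I expect the main obstacle to be the reduction step rather than the probabilistic estimates, which are essentially a law of large numbers and an exact factorization once one is on the approximating process. Theorem~\ref{MFA_generalthm} only supplies convergence in probability of a single conditional $\ell_1$-expectation, so the uniform-continuity argument must be organized so that all error terms are simultaneously dominated by that one $\xrightarrow{P}0$ quantity (uniformly over the fixed horizon $k$), with only continuity—not Lipschitzness—of $f$ available. A secondary point to verify is that $g_r^{(n)}$ is genuinely common across all vertices of community $r$, i.e.\ that the conditional law of $\boldsymbol{\mathcal{R}}_i$ depends on $i$ only through $J_i$; for the $c\,\mathbf{q}_i 1(d_i^-=0)$ contribution to the external signal this amounts to checking that $\mathbb{P}_n(d_i^-=0)$ is identical for all $i$ in a fixed community, which follows because that probability depends on $\mathscr{J}_n$ only through the community counts and the inbound edge sets of distinct vertices are disjoint.
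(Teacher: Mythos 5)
Your proposal is correct, and its skeleton is the paper's: reduce to the approximating trajectories via uniform continuity of $f$ on the compact set $[-1,1]^{\ell\times(k+1)}$ combined with the bound $\sup_{m\geq 0}\max_{i\in V_n}\mathbb{E}_n[\|\mathbf{R}_i^{(m)}-\boldsymbol{\mathcal{R}}_i^{(m)}\|_1]\xrightarrow{P}0$ from Theorem~\ref{MFA_generalthm}, then exploit the conditional independence of the rows given $\mathscr{J}_n$. You diverge in two places, both to your advantage. First, for the law of large numbers the paper centers the unconditional averages $\frac{1}{n}\sum_i f(\mathcal{V}_{k,i})1(J_i=r)$ (your $\hat V_{k,i}$ is the paper's $\mathcal{V}_{k,i}$) and invokes Kolmogorov's strong LLN, which then forces a comparison of $E[f(\mathcal{V}_{k,i})1(J_i=r)]$ with $E[f(\mathcal{V}_k)1(\mathcal{J}_\emptyset=r)]$ through the correction factor $E[\pi_r^{(n)}]/\pi_r\to 1$; you instead run a conditional Chebyshev argument given $\mathscr{J}_n$ and absorb the label fluctuation directly through $\pi_r^{(n)}\xrightarrow{P}\pi_r$ from Assumption~\ref{regularity_condition_communities}. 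Your route needs only the conditional independence that the construction actually supplies, whereas the paper's unconditional SLLN tacitly requires independence across $i$ of the pairs $(\mathcal{V}_{k,i},J_i)$ — i.e., of the community labels themselves — which Assumption~\ref{regularity_condition_communities} does not grant; in that sense your argument is more self-contained. Second, for the product statement the paper converts each per-vertex error $\mathbb{E}_n[|f_j(V_{k,i_j})-f_j(\mathcal{V}_{k,i_j})|]$ into a community average via exchangeability and then recycles the first-part estimate, while you dominate it directly by $\max_{i\in V_n}\mathbb{E}_n[\|V_{k,i}-\mathcal{V}_{k,i}\|_1]$, which Theorem~\ref{MFA_generalthm} already controls uniformly; this is shorter and makes transparent why arbitrary, not just randomly chosen, vertices are allowed. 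Finally, your closing check — that the conditional law of the approximating trajectory depends on $i$ only through $J_i$ (the point being $\mathbb{P}_n(d_i^-=0)$, which is a function of the community counts alone) and that the rows are genuinely conditionally independent — is a hypothesis the paper uses without comment; verifying it is needed for either proof, so it is good that you flagged it.
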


\begin{remark}
The last statement in Theorem~\ref{thm_LWCprob} is closely related to Theorem~2.6.B in \cite{fraiman2022stochastic}, which states that trajectories of a finite set of vertices uniformly chosen at random are asymptotically independent of each other, a limited form of propagation of chaos that holds only for vertices that are likely to be far from each other.  However, the last statement in Theorem~\ref{thm_LWCprob} allows the vertices to be chosen arbitrarily. 
\end{remark}

The last set of results in the paper refers to the stationary behavior of the Markov chain $\{ R^{(k)}: k \geq 0\}$. Recall that $R$ denotes a matrix in $[-1,1]^{n\times\ell}$ having the stationary distribution of the chain, and $\mathbf{R}_{I_n}$ denotes a uniformly chosen row of the matrix $R$. Note that $\textbf{R}_{I_n}$ represents the {\em typical stationary opinion} on the graph, across all $\ell$ topics. Our last theorem provides a characterization for $\textbf{R}_{I_n}$ when $n$ is large, by establishing the existence of a random variable $\boldsymbol{\mathcal{R}}_\emptyset\in[-1,1]^{\ell}$ such that
$$\textbf{R}_{I_n} \Rightarrow \boldsymbol{\mathcal{R}}_\emptyset, \qquad n \to \infty,$$
where $\Rightarrow$ denotes weak convergence on $[-1,1]^\ell$. Since $\boldsymbol{\mathcal{R}}_\emptyset$ also turns out to be the weak limit of $\{ \boldsymbol{\mathcal{R}}_\emptyset^{(k)}: k \geq 0\}$ as $k \to \infty$, our results show that we can exchange the limits in $k$ (time) and $n$ (number of vertices). 

\begin{theorem}\label{thm_convergence_in_prob}
 Define the random variables $(\boldsymbol{\mathcal{R}}_\emptyset, \mathcal{J}_\emptyset)$ according to: 
 \begin{align*}
P( \boldsymbol{\mathcal{R}}_\emptyset \in A | \mathcal{J}_\emptyset = r) &= \mathbb{P}_n \left( \left. \sum_{t=0}^\infty (1-c-d)^t \mathbf{W}_i^{(t)} + \sum_{t=1}^\infty \sum_{s=1}^t a_{s,t} (M^s \bar W)_{J_i}  \in A \right| J_i = r \right), \qquad A \subseteq [-1,1]^{\ell}, \\
P(\mathcal{J}_\emptyset = r) &= \pi_r, \qquad 1 \leq r \leq K.
\end{align*} 
 Then, there exists a coupling $(\textbf{R}_{I_n}, J_{I_n}, \boldsymbol{\mathcal{R}}_\emptyset, \mathcal{J}_\emptyset)$, such that for any continuous (with respect to $\| \cdot \|_1$) and bounded function $f: [-1, 1]^{\ell}\rightarrow\mathbb{R}$, we have for any $1 \leq r \leq K$,
    \[ \mathbb{E}_n\left[ \| \textbf{R}_{I_n}-\boldsymbol{\mathcal{R}}_\emptyset \|_1 \right]\stackrel{P}{\rightarrow}0 \quad\text{and}\quad \dfrac{1}{n}\sum_{i=1}^n f(\textbf{R}_i) 1(J_i = r) \stackrel{P}{\rightarrow} E[f(\boldsymbol{\mathcal{R}}_\emptyset) 1(\mathcal{J}_\emptyset = r) ] \]
    as $n\rightarrow\infty$. Furthermore, for any arbitrary collection of vertices $\{i_1, \dots, i_m\} \subseteq V_n$ having community labels $\{r_1, \dots, r_m\}$, $m \geq 1$, and any set of continuous (with respect to $\| \cdot \|_1$) bounded functions $\{ f_1, \dots, f_m\}$ on $[-1, 1]^{\ell}$, we have
 \[   \mathbb{E}_n\left[ \prod_{j=1}^m f_j(\textbf{R}_{i_j}) \right] \xrightarrow{P} \prod_{j=1}^m E[ f_j(\boldsymbol{\mathcal{R}}_\emptyset) | \mathcal{J}_\emptyset = r_j], \qquad n \to \infty. \]
  \end{theorem}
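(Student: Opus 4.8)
The plan is to build a single coupling carrying the stationary opinion $\mathbf{R}_i$, the time-$k$ opinion $\mathbf{R}_i^{(k)}$, the time-$k$ approximation $\boldsymbol{\mathcal{R}}_i^{(k)}$ from \eqref{eq:Limit}, and its infinite-horizon version $\boldsymbol{\mathcal{R}}_i^{(\infty)}$ (which I will identify with $\boldsymbol{\mathcal{R}}_\emptyset$), and then to control the three successive differences by a telescoping triangle inequality. First I would record that, since $0 < c+d \leq 1$ and $d>0$, the matrix $A = cC + (1-c-d)I$ with $C=(c_{ij})$ has nonnegative entries and maximal row sum $\|A\|_\infty \leq c + (1-c-d) = 1-d < 1$, so \eqref{eqn:recursion_of_opinions_autoregressive} is a contraction with a unique stationary law $R$. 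Driving $\{R^{(k)}\}$ from $R^{(0)}$ and the stationary chain from its invariant initial condition with the \emph{same} external signals, the noise cancels in the difference $D^{(k)} = A^k D^{(0)}$, and since each entry of $D^{(0)}$ is bounded by $2$, $\max_i \|\mathbf{R}_i - \mathbf{R}_i^{(k)}\|_1 \leq 2\ell (1-d)^k$, uniformly in $i$ and $n$. Next I would identify $\boldsymbol{\mathcal{R}}_\emptyset$ (given community $r$) as the $k\to\infty$ limit of \eqref{eq:Limit}: the terms $\sum_{s=1}^k a_{s,k}(M^s \bar R)_{J_i\bullet}$ and $(1-c-d)^k \mathbf{R}_i^{(0)}$ vanish geometrically, while the remaining sums converge because $\|M\|_\infty \leq 1$ and $\sum_{t\geq k}\sum_{s=1}^t a_{s,t} = \sum_{t\geq k}\big[(1-d)^t - (1-c-d)^t\big] \leq (1-d)^k/d$. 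Coupling $\boldsymbol{\mathcal{R}}_i^{(k)}$ and $\boldsymbol{\mathcal{R}}_i^{(\infty)}$ through the same signals then gives $\max_i \mathbb{E}_n[\|\boldsymbol{\mathcal{R}}_i^{(k)} - \boldsymbol{\mathcal{R}}_i^{(\infty)}\|_1] \leq C(1-d)^k$.

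Combining these two deterministic geometric bounds with the uniform-in-$(i,k)$ estimate $\sup_k \max_i \mathbb{E}_n[\|\mathbf{R}_i^{(k)} - \boldsymbol{\mathcal{R}}_i^{(k)}\|_1] \xrightarrow{P} 0$ from Theorem~\ref{MFA_generalthm}, the triangle inequality gives, for each fixed $k$, $\limsup_n \max_i \mathbb{E}_n[\|\mathbf{R}_i - \boldsymbol{\mathcal{R}}_i^{(\infty)}\|_1] \leq (2\ell+C)(1-d)^k$ in probability; letting $k\to\infty$ yields the key estimate
\begin{equation*}
\max_{i\in V_n} \mathbb{E}_n\left[ \|\mathbf{R}_i - \boldsymbol{\mathcal{R}}_i^{(\infty)}\|_1 \right] \xrightarrow{P} 0 .
\end{equation*}
Since averaging over a uniform $I_n$ only improves this, the first assertion follows with $\boldsymbol{\mathcal{R}}_\emptyset := \boldsymbol{\mathcal{R}}_{I_n}^{(\infty)}$.

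For the empirical-average statement I would first replace $f(\mathbf{R}_i)$ by $f(\boldsymbol{\mathcal{R}}_i^{(\infty)})$: as $f$ is bounded and uniformly continuous on the compact set $[-1,1]^\ell$, for any $\epsilon$ there is $\delta$ with $|f(\mathbf{R}_i) - f(\boldsymbol{\mathcal{R}}_i^{(\infty)})| \leq \epsilon + 2\|f\|_\infty 1(\|\mathbf{R}_i - \boldsymbol{\mathcal{R}}_i^{(\infty)}\|_1 \geq \delta)$, and averaging together with Markov's inequality and the key estimate controls the remainder. It then remains to prove $\frac1n\sum_i f(\boldsymbol{\mathcal{R}}_i^{(\infty)})1(J_i=r) \xrightarrow{P} E[f(\boldsymbol{\mathcal{R}}_\emptyset)1(\mathcal{J}_\emptyset=r)]$; since the rows $\{\boldsymbol{\mathcal{R}}_i^{(\infty)}\}$ are conditionally independent given $\mathscr{J}_n$ (each uses only vertex $i$'s own signals), this is a variance computation of the same nature as the one behind Theorem~\ref{thm_LWCprob}, with the limit identified using $\pi_r^{(n)}\to\pi_r$ from Assumption~\ref{regularity_condition_communities}. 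The final propagation-of-chaos statement for a fixed collection $\{i_1,\dots,i_m\}$ follows the same two-step pattern: the continuity reduction (applied to the finitely many vertices via the uniform-in-$i$ key estimate) replaces $\prod_j f_j(\mathbf{R}_{i_j})$ by $\prod_j f_j(\boldsymbol{\mathcal{R}}_{i_j}^{(\infty)})$, after which the conditional independence of $\boldsymbol{\mathcal{R}}_{i_1}^{(\infty)},\dots,\boldsymbol{\mathcal{R}}_{i_m}^{(\infty)}$ given $\mathscr{J}_n$ factorizes the conditional expectation exactly, each factor converging to $E[f_j(\boldsymbol{\mathcal{R}}_\emptyset)\,|\,\mathcal{J}_\emptyset=r_j]$.

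The hard part will be the bookkeeping of the coupling: arranging the external signals so that one realization simultaneously drives the true chain, its stationary version, and both approximating processes. In particular, the first sum in \eqref{eq:Limit} weights the signals in reverse time order relative to the forward series defining $\boldsymbol{\mathcal{R}}_\emptyset$, so to obtain pathwise (rather than merely distributional) geometric $L^1$ bounds I would extend to a two-sided signal sequence $\{\mathbf{W}_i^{(t)}: t\in\mathbb{Z}\}$ and set $\boldsymbol{\mathcal{R}}_i^{(\infty)} = \sum_{t\geq 0}(1-c-d)^t \mathbf{W}_i^{(k-t)} + \sum_{t\geq 1}\sum_{s=1}^t a_{s,t}(M^s\bar W)_{J_i\bullet}$, which has the law prescribed in the theorem by stationarity of the i.i.d.\ signals and makes $\boldsymbol{\mathcal{R}}_i^{(k)} - \boldsymbol{\mathcal{R}}_i^{(\infty)}$ a geometrically small tail. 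A secondary point requiring care is the interchange of the two limits: the passage $k\to\infty$ must be performed uniformly in $i$ and compatibly with the in-probability (not almost sure) convergence supplied by Theorem~\ref{MFA_generalthm}, which is why I route the combination through a $\limsup_n$ bounded by $(1-d)^k$ for each fixed $k$ before sending $k\to\infty$.
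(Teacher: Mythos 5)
Your proposal is correct, and its skeleton is the same as the paper's: a three-term triangle inequality (stationary opinion $\leftrightarrow$ time-$k$ opinion $\leftrightarrow$ time-$k$ mean-field approximation $\leftrightarrow$ infinite-horizon mean-field limit), geometric $(1-d)^k$ bounds on the two outer terms, Theorem~\ref{MFA_generalthm} for the middle term uniformly in $k$, and then the uniform-continuity/conditional-independence machinery of Theorem~\ref{thm_LWCprob} (with $k=0$) for the empirical-average and product statements. Where you differ from the paper is precisely at the point you flag as the hard part: the realization of the coupling. You keep the chain in its original (reverse-order) time direction, couple the stationary chain to $R^{(k)}$ pathwise via shared signals, and extend the signals to a two-sided sequence so that the $k$-shifted infinite-horizon object is a pathwise tail; since both the stationary-chain-at-time-$k$ and your $\boldsymbol{\mathcal{R}}_i^{(\infty)}$ then drift with $k$ (their joint law is \emph{not} $k$-independent, because the stationary initialization is an independent draw rather than a function of past signals), you genuinely need the $\limsup_n$-then-$k\to\infty$ routing plus a diagonal selection $k_n \to \infty$ to exhibit one coupling per $n$. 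The paper sidesteps this entirely by reversing time only \emph{in distribution}: it defines the forward-indexed series $B = \sum_{t\geq 0} A^t W^{(t)}$ and $\mathcal{R}_{i\bullet} = \sum_{t \geq 0}(1-c-d)^t \mathbf{W}_i^{(t)} + \sum_{t\geq1}\sum_{s=1}^t a_{s,t}(M^s\bar W)_{J_i}$ on the \emph{same} forward signals, notes $R^{(k)} \stackrel{\mathcal{D}}{=} B^{(k)} + A^kR^{(0)} \Rightarrow B$ so that $B \stackrel{\mathcal{D}}{=} R$, and then the bound
\begin{equation*}
\left\| \mathbf{B}_i - \boldsymbol{\mathcal{R}}_i \right\|_1 \leq \frac{2\ell(1-d)^k}{d} + \left\| \mathbf{B}_i^{(k)} - \boldsymbol{\mathcal{B}}_i^{(k)} \right\|_1
\end{equation*}
holds for every $k$ simultaneously against fixed ($k$-free) objects, with the middle term equal in law to the quantity controlled by Theorem~\ref{MFA_generalthm}. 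So both routes work, and yours is arguably the more intuitive "couple two copies of the chain" argument, but the paper's construction buys a cleaner endgame: no two-sided extension and no $n$--$k$ diagonal. If you keep your construction, either carry out the diagonal explicitly or redefine the stationary chain as the two-sided backward series $\sum_{t\geq0}A^tW^{(k-t)}$, which restores joint stationarity of the coupled pair and makes its law $k$-independent.
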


\begin{remark}
    Note that the shape of $\boldsymbol{\mathcal{R}}_\emptyset$ is simply the limit as $k \to \infty$ of $\boldsymbol{\mathcal{R}}_{I_n}^{(k)}$ after a time reversal of the external signals, i.e., $\mathbf{W}_{I_n}^{(k-t)} \stackrel{\mathcal{D}}{=} \mathbf{W}_{I_n}^{(t)}$. 
\end{remark}

The proofs of all the theorems are contained in Section~\ref{S.Proofs}.

\section{Related Literature}\label{lit-review}

Interest in the evolution of opinions on social networks dates back to the 50's, with the work of French \cite{French56} and DeGroot  \cite{degroot1974reaching}. The model in \cite{degroot1974reaching} corresponds in our notation to the recursion $\mathbf{R}^{(k+1)} = A \mathbf{R}^{(k)}$, for a given $\mathbf{R}^{(0)}$ and $A = c \, C + (1-c) I$, with $C = (c_{ij})$ as in \eqref{eq:CommunityMatrix} (the model is for a single topic, hence the vectors instead of matrices). The main question studied in \cite{degroot1974reaching, berger1981necessary} was the existence of consensus, seen as a limit $\mathbf{R} = \lim_{k \to \infty} \mathbf{R}^{(k)}$ whose components are all equal to each other (a consequence of the Perron-Frobenius theorem when the matrix $A$ is irreducible and aperiodic). In \cite{friedkin1990social}, Friedkin and Johnsen extended the model to allow disagreement by considering the recursion $\mathbf{R}^{(k+1)} = A \mathbf{R}^{(k)} + \mathbf{Q}$, with $\mathbf{Q}$ a deterministic vector, which is a special case of our model.  The models in \cite{Yang2017InnovationGE} replace the deterministic $\mathbf{Q}$ with a sequence of random signals $\{ \mathbf{W}^{(k)}: k \geq 0\}$, as we do, although they are independent of the underlying network, since they are used to model miscommunication. A more recent generalization of the DeGroot model was given in \cite{dandekar2013biased}, where the authors introduced the concept of \textit{biased assimilation}, namely the tendency of individuals to pull their opinion towards their initial opinion once presented with inconclusive evidence, in order to make the repeated averaging scheme of DeGroot lead to polarization. Another closely related model that is capable of introducing confirmation bias by rejecting an opinion from a neighbor when it is too different from their own expressed opinion is the Hegselmann-Krause model \cite{Rainer2002-RAIODA}, which due to its non-linear updating rule is not covered by our analysis. All these models are defined on a fixed graph, so the stochastic matrix $A$ is deterministic, unlike ours, where we use both the updating rules of the opinion process and the properties of the underlying graph to explain complex phenomena.  Other variants of the DeGroot and Friedkin-Johnsen models that incorporate multiple topics and time-varying self-weights (e.g., $d$ in our model is allowed to be different for each vertex and change with time) are given in \cite{jia2015opinion, parsegov2016novel, tian2021social}.  

While all the models mentioned above are defined on discrete time with synchronous updating rules, there is a different family of popular opinion models where interactions occur one at a time, at either random or deterministic times. Among those are the bounded confidence models, such as the Deffuant-Weisbuch model \cite{weisbuch1999dynamical, deffuant2000mixing, deffuant2002can}. In this model, at each time step a pair of neighboring individuals is randomly selected. Then, the two neighbors update their opinions according to a simple linear equation if, and only if, the difference of their current opinions is less than a specified threshold. This model, like the Hegselmann-Krause model, is meant to explain fragmentation and polarization through a direct confirmation bias. Its non-linearity and lack of an external signal means that its analysis is usually done using the methods of interacting particle systems or numerical stochastic simulation \cite{holyst2001social, ben2003bifurcations, le2007generic, 8c5aa4c6-94cc-393f-a938-57db54441872, gomez2012bounded, meng2018opinion}. A more recent analysis of the Deffuant-Weisbuch model that takes into account the network's topology is \cite{fennell2021generalized}. Our model is not meant to  tackle confirmation bias at the level of the opinions being shared, but rather at the level of the individuals who share them, and the main appeal of our model is that it accounts for the combined effects of the network topology, the media framing effect, and people's selective exposure, while still producing computable formulas for means and variances for the opinion process. 

As mentioned in the introduction, the main focus of this paper is not the explanatory and predictive power of our model, but rather its behavior on graphs of various levels of edge density. So, before we discuss that aspect of the related literature, we just want to mention that there is yet another family of classical opinion models taking values on finite sets (rather than continuous intervals), like the voter model \cite{liggett1999stochastic}, where opinions can be either 0 or 1, or the Axelrod model  \cite{axelrod1997dissemination}, where each vertex has a fixed set of features and traits, leading to a notion of ``similarity" between vertices. The former can be used to model consensus through asynchronous updates, whereas the latter achieves a diversity of opinions by using a trait updating rule that depends on how similar the features of the two interacting vertices are, a concept known as {\em homophily}. The underlying graph in both of these models is usually a lattice, although the voter model has also been analyzed in heterogeneous graphs in \cite{sood2005voter}. The work in \cite{redner2019reality} contains a review of interesting variants of the voter model. 

Our opinion model falls in the category of interacting particle systems in discrete time with synchronous updates and continuous state space, although only vertices in the same community are exchangeable. The approximating process $\{ \mathcal{R}^{(k)}: k \geq 0\}$ is a classical mean-field approximation, where the complex interactions between neighboring vertices are replaced with their average behavior. Our main theorem proves propagation of chaos by coupling the trajectories, as described in the surveys \cite{chaintron2022propagationapplications,chaintron2021propagation}. The literature on mean-field approximations is so vast, and covers such a wide range of fields, that it would be impossible to mention all the related models for which results similar to ours have been derived. The closest work to the techniques used in this paper is the analysis of the personalized PageRank algorithm done in \cite{avrachenkov2018mean}, where a stochastic block model is the only source of randomness in what otherwise would be a deterministic linear recursion. The main results in \cite{avrachenkov2018mean} are closely related to our Theorem~\ref{MFA_generalthm} for density parameters $\theta_n/\log n \to \infty$, albeit for a different recursion and under an $l_2$-norm (rather than the $l_\infty$-norm we use). Our work shows that the approximation itself continues to hold beyond the $\log n$ threshold, for any $\theta_n \to \infty$, but with lesser precision (a weaker norm). Relative to the majority of mean-field results in the interacting particles literature, our work is different in the explicit role that the underlying random graph plays, its ability to seamlessly cover the entire range of edge densities (other than the sparse case), the explicit characterization of the limiting process as the number of particles goes to infinity, and the exchange of limits with the stationary version of the limiting process. Other works that analyze stochastic processes on graphs with various levels of edge density are \cite{Bud_Muk_Wu_19}, which studies the supermarket model on graphs, and \cite{Bha_Bud_Wu_19}, which analyzes weakly interacting diffusions on time varying random graphs.  It is also worth pointing out that mean-field approximations for bounded confidence models such as the Deffuant-Weisbuch model and some of its variants have been given in \cite{le2007generic, 8c5aa4c6-94cc-393f-a938-57db54441872,gomez2012bounded}.  

Finally, we would like to point out that the analysis of our model in the semi-sparse regime, i.e., $\theta_n$ below the $\log n$ threshold, is based on a coupling of the underlying random graph with a marked multi-type branching process. This technique is based on the notion of local weak convergence of random graph sequences \cite{10.1214/EJP.v6-96, aldous2004objective} (see also \cite{van2024random2}). Although for non-sparse random graphs the local weak limit is not a locally bounded graph, the dSBM can still be coupled with a marked multi-type Galton-Watson tree (with average degree that grows with $n$). In fact, a strong coupling, in the sense of the intermediate coupling in \cite{olvera2022strong}, still holds, which is what enables the work in this paper. For sparse graphs, local weak convergence or strong couplings have been used to study the PageRank algorithm in \cite{chen2014pagerank, garavaglia2020local, banerjee2022pagerank}, general discrete-time stochastic recursions on sparse graphs in \cite{fraiman2022stochastic}, interacting diffusions in \cite{lacker2019local, lacker2020marginal}, and continuous jump process models \cite{ganguly2022hydrodynamic, ganguly2022interacting, cocomello2023exact}. The techniques in this paper will likely extend to more general classes of stochastic recursions on random graphs with growing edge densities.

\section{Proofs}\label{S.Proofs}

This section contains the proofs for all the theorems in Section~\ref{main_results}. The proof of our main result, Theorem~\ref{MFA_generalthm}, requires two different approaches depending on the speed at which $\theta_n \to \infty$, however, both approaches start with the same intermediate approximation. We will start by presenting the intermediate approximation in this section, and then complete the analysis for the two different density regimes; in Section~\ref{SS.SemiDense} for the case when $\theta_n \geq \gamma \log n$ for some $\gamma > 0$, and in Section~\ref{SS.SemiSparse} for the case when $\limsup_{n\to\infty}\log n/\theta_n>0$.
Finally, as mentioned earlier, Theorems~\ref{thm_LWCprob} and \ref{thm_convergence_in_prob} are consequences of Theorem~\ref{MFA_generalthm}, so we give their proofs in Section~\ref{SS.LWCprob}. 

Recall that the opinion process will be constructed on a fixed directed graph $G_n$ sampled according to Section~\ref{subsection_network}, where each vertex $i \in V_n$ has an attribute $\mathbf{A}_i = (J_i, \mathbf{Q}_i)$ and an extended attribute $\mathbf{Y}_i = (J_i, \mathbf{Q}_i, \mathbf{B}_i)$, with $\mathbf{B}_i = (B_{ij})_{1 \leq j \leq n}$ the unnormalized weights. 

In order for us to derive the intermediate approximation, we first note that the opinion recursion (\ref{eqn:recursion_of_opinions_autoregressive}) can be rewritten in matrix notation as 
\begin{equation}\label{opinion_recursion_matrix_form}
    R^{(k)}=AR^{(k-1)}+W^{(k)},
\end{equation}
where $R^{(k)} \in [-1,1]^{n \times \ell}$ and $W^{(k)} \in [-1,1]^{n \times \ell}$ are the matrices having $(i,j)$th components $R_{ij}^{(k)}$ and $W_{ij}^{(k)}$, respectively, and $A \in [0,1]^{n\times n}$ the matrix satisfying
$$A_{ij} = c\cdot C_{ij}, \quad i \neq j, \qquad A_{ii} = 1-c-d,$$
with $C_{ij}$ constructed according to \eqref{eq:CommunityMatrix} using the random unnormalized weighs $\{B_{ij}: i,j \in V_n\}$ (the use of upper case is to remind the reader that these are random). 
Iterating \eqref{opinion_recursion_matrix_form} gives the explicit solution
$$R^{(k)} = \sum_{t=0}^{k-1} A^t W^{(k-t)} + A^k R^{(0)}.$$
Using the observation that $A = c\,C + (1-c-d) I$, where $C$ is the matrix having $(i,j)$th component $C_{ij}$, and $I$ is the identity matrix in $\mathbb{R}^{n\times n}$, we get
$$A^t = (c\,C + (1-c-d)I)^t = \sum_{s=0}^t \binom{t}{s} (1-c-d)^{t-s}c^s C^s = \sum_{s=0}^t a_{s,t} C^s, \qquad t\geq 0,$$
and, therefore,
\begin{equation} \label{eq:SolvedRecursion}
R^{(k)} = \sum_{t=0}^{k-1} \sum_{s=0}^t a_{s,t} C^s W^{(k-t)} + \sum_{s=0}^k a_{s,k} C^s R^{(0)}, \qquad k \geq 1.
\end{equation}

Now, construct the matrix $\tilde M \in \mathbb{R}^{n\times n}$ according to
$$\tilde M_{ij} =  \frac{\beta_{J_i,J_j} \kappa(J_j, J_i)}{n \left( \beta_{J_i,1} \pi_1^{(n)}\kappa(1,J_i)+\dots+\beta_{J_i,K} \pi_K^{(n)}\kappa(K,J_i) \right)} 1(i\neq j),$$
and recall that $\beta_{r,s} = E[ B_{ij} | J_i = r, J_j = s]$, with $\tilde M_{ij} \equiv 0$ if the denominator is zero.  
The intuition behind this definition of $\tilde{M}$ is that it is the ``approximate" conditional mean of $C$ given the community labels $\mathscr{J}_n$, meaning that for $i, j\in V_n$, $i\neq j$, and $n$ sufficiently large (so that the minimum in $p_{ij}^{(n)}$ can be dropped), we have
\[ \tilde{M}_{ij} = \dfrac{\beta_{J_i,J_j} p_{ij}^{(n)} }{n \left( \pi_1^{(n)}\beta_{J_i,1} p_{1i}^{(n)} +\dots+\pi_K^{(n)}\beta_{J_i,K}p_{Ki}^{(n)} \right) } =
\dfrac{\mathbb{E}_n[B_{ij}1(j\to i)]}{\mathbb{E}_n\left[\, \sum_{r=1}^n B_{ir}1(r\to i) \right]}.
\]
Define $\breve{W} = \mathbb{E}_n\left[ W^{(0)} \right]$, $\breve{R} = \mathbb{E}_n\left[ R^{(0)} \right]$, and construct the new process $\{\tilde{R}^{(k)}: k\geq0\}$ defined by
$$ \tilde{R}^{(0)} = R^{(0)},$$
\begin{equation}\label{interm_process}
    \tilde{R}^{(k)} =  \sum_{t=0}^{k-1} (1-c-d)^t W^{(k-t)} + 1(k \geq 2) \sum_{t=1}^{k-1}  \sum_{s=1}^t a_{s,t} \tilde M^s \breve{W}  + \sum_{s=1}^k a_{s,k} \tilde M^s \breve{R} + (1-c-d)^k R^{(0)}, \quad k \geq 1.
\end{equation}

\noindent
Intuitively, $\tilde{R}^{(k)}$ replaces all neighbor contributions with their approximate means, i.e., every term of the form $C^s X$ with $s \geq 1$ and $X$ a random matrix is replaced with $\tilde M^s \mathbb{E}_n[X]$. It is important to point out that the rows of $\tilde{R}^{(k)}$, which we will denote $\{ \mathbf{\tilde R}_i^{(k)}: i \in V_n\}$,  become independent as a consequence. Our first result computes a bound for $\mathbb{E}_n\left[ \left\| R^{(k)} - \tilde{R}^{(k)} \right\|_\infty \right]$ and $\max_{i \in V_n} \mathbb{E}_n\left[ \left\| \mathbf{R}_i^{(k)} - \mathbf{\tilde R}_i^{(k)} \right\|_i \right]$ .

\begin{theorem}\label{T.interm_pnorm} 
For any $k \geq 1$, we have
\begin{align*}
\max_{i \in V_n} \mathbb{E}_n\left[ \left\| \mathbf{R}_i^{(k)} - \mathbf{\tilde R}_i^{(k)} \right\|_1 \right] &\leq  1(k \geq 2)  \sum_{t=1}^{k-1} \sum_{s=1}^t a_{s,t}  \max_{i\in V_n} \mathbb{E}_n\left[ \left\| (C^s W^{(0)})_{i\bullet} - (\tilde M^s \breve{W})_{i\bullet} \right\|_1 \right] \\
&\hspace{5mm} + \sum_{s=1}^k a_{s,k} \max_{i\in V_n} \mathbb{E}_n\left[  \left\| (C^s R^{(0)})_{i\bullet} - (\tilde M^s \breve{R})_{i\bullet} \right\|_1 \right]. 
\end{align*}
Furthermore, for $h(X) = \mathbb{E}_n\left[ \left\| (C-\tilde M) X \right\|_\infty \right]$, we have  
    \begin{align*}
  \mathbb{E}_n\left[ \left\| R^{(k)} - \tilde{R}^{(k)} \right\|_\infty \right]  &\leq  \max_{r\geq 1}  h( \tilde{M}^r \breve{W}  )  \sum_{t=0}^{k-1} (1-d)^{t-1} t + \mathbb{E}_n \left[  \left\| C W^{(0)} - \tilde M \breve{W} \right\|_\infty \right] \sum_{t=0}^{k-1} (1-d)^t  \\
&\hspace{5mm} + \max_{r\geq 1}  h( \tilde{M}^r \breve{R} ) \, k (1-d)^{k-1}   + \mathbb{E}_n \left[  \left\| C R^{(0)} - \tilde M \breve{R} \right\|_\infty \right] (1-d)^k  .
\end{align*}
\end{theorem}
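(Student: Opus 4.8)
The plan is to derive both bounds from a single algebraic identity for the difference $R^{(k)} - \tilde R^{(k)}$. Comparing the explicit solution \eqref{eq:SolvedRecursion} with the intermediate process \eqref{interm_process}, the $s = 0$ contributions (which carry the genuine signals $W^{(k-t)}$ and the term $(1-c-d)^t R^{(0)}$) coincide in both and cancel, leaving only the $s \geq 1$ terms:
$$R^{(k)} - \tilde R^{(k)} = \sum_{t=1}^{k-1}\sum_{s=1}^t a_{s,t}\left(C^s W^{(k-t)} - \tilde M^s \breve W\right) + \sum_{s=1}^k a_{s,k}\left(C^s R^{(0)} - \tilde M^s \breve R\right).$$
Since the external signals are conditionally i.i.d.\ over time given $\mathscr{J}_n$ and $\breve W = \mathbb{E}_n[W^{(m)}]$ for every $m$, the pair $(C, W^{(k-t)})$ has the same conditional law as $(C, W^{(0)})$ given $\mathscr{J}_n$, so each occurrence of $W^{(k-t)}$ may be replaced by $W^{(0)}$ after taking $\mathbb{E}_n$.

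For the first (row-$\ell_1$) bound I would take the $\|\cdot\|_1$ norm of the $i$th row of the identity above, apply the triangle inequality term by term, pass $\mathbb{E}_n$ inside, perform the $W^{(0)}$-replacement, and then use subadditivity of the maximum to move $\max_{i \in V_n}$ inside the double sums. This is routine and yields the first displayed inequality directly, the factor $1(k \geq 2)$ accounting for the empty signal sum when $k = 1$.

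The second (induced-$\ell_\infty$) bound is the substantive part, and the key device is a telescoping decomposition that isolates a single coupled factor $CW^{(0)} - \tilde M \breve W$ while forcing every remaining $(C - \tilde M)$ to act on a quantity that is deterministic given $\mathscr{J}_n$. Concretely, I would establish the identity
$$C^s X - \tilde M^s \breve X = C^{s-1}\left(CX - \tilde M \breve X\right) + \sum_{j=1}^{s-1} C^{s-1-j}(C - \tilde M)\tilde M^j \breve X,$$
valid for $\breve X = \mathbb{E}_n[X]$ and any $s \geq 1$ (a one-line telescoping cancellation). Taking it with $(X,\breve X) = (W^{(0)}, \breve W)$ and $(R^{(0)}, \breve R)$, applying $\|\cdot\|_\infty$ and submultiplicativity of the induced norm together with $\|C\|_\infty \leq 1$ and $\|\tilde M\|_\infty \leq 1$ (both matrices have row sums at most one), the leading powers of $C$ drop out. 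For each $j \geq 1$ the matrix $\tilde M^j \breve W$ is $\mathscr{J}_n$-measurable, so $\mathbb{E}_n[\|(C - \tilde M)\tilde M^j \breve W\|_\infty] = h(\tilde M^j \breve W) \leq \max_{r \geq 1} h(\tilde M^r \breve W)$, while the leading term contributes $\mathbb{E}_n[\|CW^{(0)} - \tilde M \breve W\|_\infty]$; this gives the per-term estimate
$$\mathbb{E}_n\left[\left\|C^s W^{(0)} - \tilde M^s \breve W\right\|_\infty\right] \leq \mathbb{E}_n\left[\left\|CW^{(0)} - \tilde M \breve W\right\|_\infty\right] + (s-1)\max_{r \geq 1} h(\tilde M^r \breve W),$$
and analogously for the $R^{(0)}$ terms.

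Finally I would insert these estimates into the triangle-inequality bound for $\|R^{(k)} - \tilde R^{(k)}\|_\infty$ and evaluate the binomial sums. Using $a_{s,t} = \binom{t}{s}(1-c-d)^{t-s}c^s$ with the two elementary identities $\sum_{s=0}^t a_{s,t} = (1-d)^t$ and $\sum_{s=0}^t s\,a_{s,t} = tc(1-d)^{t-1}$, one gets $\sum_{s=1}^t a_{s,t} \leq (1-d)^t$ and $\sum_{s=1}^t (s-1)a_{s,t} \leq tc(1-d)^{t-1} \leq t(1-d)^{t-1}$; summing over $t$ and discarding the subtracted $(1-c-d)$-terms produces exactly the coefficients $\sum_{t=0}^{k-1}(1-d)^t$, $\sum_{t=0}^{k-1}t(1-d)^{t-1}$, $(1-d)^k$, and $k(1-d)^{k-1}$ in the statement. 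The main obstacle is arranging the telescoping so that the random, coupled factor $CW^{(0)} - \tilde M \breve W$ is separated cleanly from the deterministic factors $\tilde M^j \breve W$: a naive expansion leaves $C$ appearing simultaneously inside $(C - \tilde M)$ and inside a factor acting on the random $W^{(0)}$, which cannot be matched to the definition of $h$, whereas the grouping above pairs the signal with exactly one copy of $C$ and resolves this.
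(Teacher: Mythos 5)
Your proposal is correct and takes essentially the same route as the paper's proof: the same cancellation of the $s=0$ terms in the difference $R^{(k)}-\tilde R^{(k)}$, the identical telescoping identity $C^sX-\tilde M^s\breve X = C^{s-1}(CX-\tilde M\breve X)+\sum_{r=1}^{s-1}C^{s-r-1}(C-\tilde M)\tilde M^r\breve X$ combined with $\|C\|_\infty=1$, and equivalent evaluations of the binomial sums $\sum_{s}a_{s,t}$ and $\sum_{s}s\,a_{s,t}$. The only cosmetic difference is that you justify explicitly the replacement of $W^{(k-t)}$ by $W^{(0)}$ under $\mathbb{E}_n$ (via conditional equidistribution of $(C,W^{(k-t)})$ and $(C,W^{(0)})$), a step the paper leaves implicit.
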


\begin{proof}
Recall that $a_{s,t} = \binom{t}{s} (1-c-d)^{t-s}c^s$ and note that for any $i \in V_n$ we have
\begin{align*}
\left\| \mathbf{R}_i^{(k)} - \mathbf{\tilde R}^{(k)}_i \right\|_1 &= \left\| 1(k\geq 2) \sum_{t=1}^{k-1} \sum_{s=1}^t a_{s,t} \left( C^s W^{(k-t)} - \tilde M^s \breve W \right)_{i\bullet} + \sum_{s=1}^k a_{s,k} \left( C^s R^{(0)} - \tilde M^s \breve R \right)_{i\bullet}  \right\|_1 \\
&\leq 1(k\geq 2) \sum_{t=1}^{k-1} \sum_{s=1}^t a_{s,t}  \left\| \left( C^s W^{(k-t)} - \tilde M^s \breve W \right)_{i\bullet} \right\|_1 + \sum_{s=1}^k a_{s,k} \left\| \left( C^s R^{(0)} - \tilde M^s \breve R \right)_{i\bullet}  \right\|_1.
\end{align*}
To obtain the first statement of the theorem simply take expectations on both sides to get
\begin{align*}
 \max_{i \in V_n}   \mathbb{E}_n\left[ \left\| \mathbf{R}_i^{(k)} - \mathbf{\tilde R}^{(k)}_i \right\|_1 \right] &\leq  1(k\geq 2) \sum_{t=1}^{k-1} \sum_{s=1}^t a_{s,t}  \max_{i \in V_n} \mathbb{E}_n\left[ \left\| \left( C^s W^{(k-t)} - \tilde M^s \breve W \right)_{i\bullet} \right\|_1 \right] \\
    &\hspace{5mm} + \sum_{s=1}^k a_{s,k}  \max_{i \in V_n} \mathbb{E}_n\left[  \left\| \left( C^s R^{(0)} - \tilde M^s \breve R \right)_{i\bullet}  \right\|_1 \right].
\end{align*}

To continue to the second statement, note that from the properties of the norm $\| \cdot \|_\infty$, we obtain that
\begin{align*}
\left\| R^{(k)} - \tilde R^{(k)} \right\|_\infty &\leq 1(k\geq 2) \sum_{t=1}^{k-1} \sum_{s=1}^t a_{s,t}  \left\| C^s W^{(k-t)} - \tilde M^s \breve W  \right\|_\infty + \sum_{s=1}^k a_{s,k} \left\|  C^s R^{(0)} - \tilde M^s \breve R   \right\|_\infty.
\end{align*}
Furthermore, since for any matrices $X, \breve X \in \mathbb{R}^{n \times \ell}$ and any $s \geq 1$ we have
\begin{align*}
    C^s X - \tilde M^s \breve X &= C^{s-1}(C X - \tilde M \breve X) + (C^{s-1} - \tilde M^{s-1}) \tilde M \breve X \\ &=
    C^{s-1} (C X - \tilde M \breve X) + \sum_{r=1}^{s-1} C^{s-r-1} (C - \tilde M) \tilde M^r \breve X,
\end{align*}
then
\begin{align*}
 \left\| C^s X - \tilde M^s \breve X \right\|_\infty &\leq \sum_{r=1}^{s-1} \left\| C^{s-r-1} (C-\tilde M) \tilde M^r \breve X  \right\|_\infty + \left\| C^{s-1} (C X - \tilde M \breve X) \right\|_\infty   \\
&\leq \sum_{r=1}^{s-1} \| C \|_\infty^{s-r-1} \left\| (C-\tilde M) \tilde M^r \breve X \right\|_\infty + \| C\|_\infty^{s-1} \left\| CX - \tilde M \breve X \right\|_\infty \\
&= \sum_{r=1}^{s-1}  \left\| (C-\tilde M) \tilde M^r \breve X \right\|_\infty +  \left\| CX - \tilde M \breve X \right\|_\infty,
\end{align*}
where we used the fact that $\| C \|_\infty = 1$. We conclude that
\begin{align*}
&\mathbb{E}_n\left[ \left\| R^{(k)} - \tilde R^{(k)} \right\|_\infty \right] \\
&\leq 1(k \geq 2) \sum_{t=1}^{k-1} \sum_{s=1}^t a_{s,t} \mathbb{E}_n\left[  \left\| C^s W^{(k-t)} - \tilde M^s \breve{W} \right\|_\infty  \right]+ \sum_{s=1}^k a_{s,k} \mathbb{E}_n\left[  \left\| C^s R^{(0)} - \tilde M^s \breve{R} \right\|_\infty \right] \\
&\leq 1(k \geq 2)  \sum_{t=1}^{k-1} \sum_{s=1}^t a_{s,t} \left( \sum_{r=1}^{s-1} \mathbb{E}_n\left[  \left\| (C- \tilde M) \tilde M^r \breve W \right\|_\infty \right]   + \mathbb{E}_n \left[  \left\| C W^{(k-t)}  - \tilde M \breve W \right\|_\infty \right]    \right)  \\
&\hspace{5mm} + \sum_{s=1}^k a_{s,k}  \left( \sum_{r=1}^{s-1} \mathbb{E}_n\left[  \left\| (C- \tilde M) \tilde M^r \breve{R}  \right\|_\infty \right]   + \mathbb{E}_n \left[  \left\| C R^{(0)} - \tilde M \breve{R} \right\|_\infty \right]    \right)   \\
&\leq  \max_{r\geq 1}  h( \tilde{M}^r \breve{W}  ) 1(k \geq 2)  \sum_{t=1}^{k-1} \sum_{s=1}^t a_{s,t} (s-1) + \mathbb{E}_n \left[  \left\| C W^{(0)} - \tilde M \breve{W}  \right\|_\infty \right]  1(k \geq 2)  \sum_{t=1}^{k-1} \sum_{s=1}^t a_{s,t}  \\
&\hspace{5mm} + \max_{r\geq 1}  h( \tilde{M}^r \breve{R} )  \sum_{s=1}^k a_{s,k}(s-1)   + \mathbb{E}_n \left[  \left\| C R^{(0)} - \tilde M \breve{R}  \right\|_\infty \right]  \sum_{s=1}^k a_{s,k}.
\end{align*}
To compute the double sums more explicitly, let $\text{Bin}(n,p)$ denote a binomial random variable with parameters $(n,p)$, and note that for $k \geq 1$,
\begin{align} \label{eq:BinMean}
 \sum_{t=1}^{k} \sum_{s=1}^t a_{s,t}  s &=  \sum_{t=1}^{k} \sum_{s=1}^t \binom{t}{s} (1-c-d)^{t-s}  c^{s} s = \sum_{t=1}^k (1-d)^t  E[ \text{Bin}(t, c/(1-d)) ]= c \sum_{t=1}^k (1-d)^{t-1} t,
\end{align}
and
\begin{align} \label{eq:BinProb}
 \sum_{t=1}^{k} \sum_{s=1}^t a_{s,t}   &=  \sum_{t=1}^{k} \sum_{s=1}^t \binom{t}{s} (1-c-d)^{t-s}  c^{s}  = \sum_{t=1}^k (1-d)^t  P( \text{Bin}(t, c/(1-d)) \geq 1) \nonumber \\
 &= \sum_{t=1}^k (1-d)^t  \left(1 - (1-c-d)^t /(1-d)^t \right) =  \sum_{t=1}^k \left( (1-d)^t - (1-c-d)^t \right). 
 \end{align}
 Similarly, the single sums can be computed to be
 \begin{equation}\label{eq:BinMean2}
     \sum_{s=1}^k a_{s,k}  s = \sum_{s=1}^k \binom{k}{s} (1-c-d)^{k-s} c^{s} s = (1-d)^k  E[\text{Bin}(k,c/(1-d))] = c k (1-d)^{k-1}
 \end{equation}
 and
\begin{equation}\label{eq:BinProb2}
    \sum_{s=1}^k a_{s,k}  = \sum_{s=1}^k \binom{k}{s} (1-c-d)^{k-s} c^{s} = (1-d)^k P(\text{Bin}(k,c/(1-d)) \geq 1) =  (1-d)^k  - (1 - c - d)^k.
\end{equation}
Consequently,
\begin{align*}
\mathbb{E}_n\left[ \left\| R^{(k)} - \tilde R^{(k)} \right\|_\infty \right] 
&\leq  \max_{r\geq 1}  h( \tilde{M}^r \breve{W}  ) 1(k \geq 2)  \sum_{t=1}^{k-1} (1-d)^{t-1} t \\
&\hspace{5mm} +\mathbb{E}_n \left[  \left\| C W^{(0)} - \tilde M \breve{W}  \right\|_\infty \right]  1(k \geq 2) \sum_{t=1}^{k-1} (1-d)^t + \\ 
&\hspace{5mm} +\max_{r\geq 1}  h( \tilde{M}^r \breve{R} ) \, k (1-d)^{k-1}   + \mathbb{E}_n \left[  \left\| C R^{(0)} - \tilde M \breve{R} \right\|_\infty \right] (1-d)^k.
\end{align*}
\end{proof}

Once the intermediate approximation $\{ \tilde{R}^{(k)}: k \geq 0\}$ given by \eqref{interm_process} is obtained, its connection to the final approximation $\{ \mathcal{R}^{(k)}: k \geq 0\}$  given by \eqref{eq:Limit} becomes apparent, since for all $s \geq 1$ we have 
$$(\tilde M^s \breve{W})_{i \bullet} \approx (M^s \bar{W})_{J_i \bullet} \qquad \text{and} \qquad (\tilde M^s \breve{R})_{i \bullet} \approx  (M^s \bar{R})_{J_i \bullet} ,$$
for large $n$. The following result provides a bound for the norm between these two processes.

\begin{theorem}\label{T.interm_meanfield}
For any $\theta_n \to \infty$, 
\[ \sup_{k \geq 0} \left\| \tilde{R}^{(k)} - \mathcal{R}^{(k)} \right\|_\infty \leq \frac{\ell c}{d^2} \cdot \mathcal{E}_n, \]
where $\mathcal{E}_n = \max_{1\leq r,s \leq K} \left| \frac{\pi_s^{(n)} \pi_r - \pi_s \pi_r^{(n)}}{\pi_r^{(n)} \pi_s} \right|$. 
\end{theorem}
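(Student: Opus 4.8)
The plan is to subtract the two processes row by row and exploit that they differ only in their ``mean-field'' pieces. Comparing \eqref{interm_process} with row $i$ of \eqref{eq:Limit}, the first sum $\sum_{t=0}^{k-1}(1-c-d)^t \mathbf{W}_i^{(k-t)}$ and the last term $(1-c-d)^k\mathbf{R}_i^{(0)}$ are \emph{identical} in both processes, since row $i$ of $\tilde{R}^{(k)}$ carries exactly the signals $\mathbf{W}_i^{(k-t)}$ and the initial opinion $\mathbf{R}_i^{(0)}$ attached to vertex $i$. These cancel, leaving
\[ (\tilde{R}^{(k)} - \mathcal{R}^{(k)})_{i\bullet} = 1(k\geq 2)\sum_{t=1}^{k-1}\sum_{s=1}^t a_{s,t}\left[(\tilde M^s \breve W)_{i\bullet} - (M^s \bar W)_{J_i\bullet}\right] + \sum_{s=1}^k a_{s,k}\left[(\tilde M^s \breve R)_{i\bullet} - (M^s \bar R)_{J_i\bullet}\right]. \]
It therefore suffices to bound $\|\tilde M^s \breve W - M^s \bar W\|_\infty$ and $\|\tilde M^s \breve R - M^s \bar R\|_\infty$ uniformly in $s$ and then sum against the weights $a_{s,t}$.

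First I would record that $\breve R$ and $\breve W$ are constant across each community at the level of their rows: by the initialization assumption $\breve R_{i\bullet} = \mathbb{E}_n[\mathbf{R}_i^{(0)}] = \bar R_{J_i\bullet}$, and by the conditionally i.i.d.\ structure of the external signals $\breve W_{i\bullet} = \bar W_{J_i\bullet}$, the only vertex-dependent piece beyond $J_i$ being the event $\{d_i^-=0\}$, whose contribution I treat as a lower-order correction (this is where $\theta_n\to\infty$ enters). Next, a direct computation shows that $\tilde M$ maps community-constant matrices to community-constant matrices: if $X_{i\bullet}=x_{J_i\bullet}$ then $(\tilde M X)_{i\bullet} = (M^{(n)}x)_{J_i\bullet}$ up to an $O(1/n)$ self-exclusion term, where $M^{(n)}\in[0,1]^{K\times K}$ is the \emph{empirical} analogue of $M$ obtained by replacing each $\pi_s$ with $\pi_s^{(n)}$ in \eqref{average_matrix_M}. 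Iterating, $(\tilde M^s\breve W)_{i\bullet}$ reduces to $((M^{(n)})^s\bar W)_{J_i\bullet}$, so the problem collapses to comparing the $s$-th powers of the two $K\times K$ stochastic matrices $M^{(n)}$ and $M$.

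The heart of the matter is the stability estimate $\|M^{(n)}-M\|_\infty \leq \mathcal{E}_n$. Setting $\rho_s = \pi_s^{(n)}/\pi_s$, the definition of $\mathcal{E}_n$ reads $\mathcal{E}_n = \max_{r,s}|\rho_s-\rho_r|/\rho_r$, and \eqref{average_matrix_M} gives $M^{(n)}_{rs} = M_{rs}\rho_s/\sigma_r$ with $\sigma_r = \sum_{s'}M_{rs'}\rho_{s'}$. Hence $M^{(n)}_{rs}-M_{rs} = M_{rs}\sigma_r^{-1}\sum_{s'}M_{rs'}(\rho_s-\rho_{s'})$, and using $|\rho_s-\rho_{s'}|\leq \mathcal{E}_n\rho_{s'}$ I can sum over $s$ to obtain $\sum_s|M^{(n)}_{rs}-M_{rs}|\leq \mathcal{E}_n$ \emph{exactly}, the factor $\sigma_r$ cancelling precisely because $\mathcal{E}_n$ carries $\pi_r^{(n)}$ in its denominator. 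Since $M$ and $M^{(n)}$ are substochastic, $\|M\|_\infty,\|M^{(n)}\|_\infty\leq 1$, so the telescoping identity $(M^{(n)})^s - M^s = \sum_{j=0}^{s-1}(M^{(n)})^j(M^{(n)}-M)M^{s-1-j}$ yields $\|(M^{(n)})^s - M^s\|_\infty \leq s\,\mathcal{E}_n$. Combined with $\|\bar W\|_\infty,\|\bar R\|_\infty\leq \ell$ (entries in $[-1,1]$), this gives $\|\tilde M^s\breve W - M^s\bar W\|_\infty \leq \ell\, s\,\mathcal{E}_n$ and the same bound for the $R$ term.

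Finally I would assemble the sums. Using the identities \eqref{eq:BinMean} and \eqref{eq:BinMean2}, namely $\sum_{s=1}^t a_{s,t}\,s = c\,t(1-d)^{t-1}$ and $\sum_{s=1}^k a_{s,k}\,s = c\,k(1-d)^{k-1}$, the two contributions combine neatly: the $W$-sum produces $\ell c\,\mathcal{E}_n\sum_{t=1}^{k-1}t(1-d)^{t-1}$ and the $R$-sum supplies exactly the missing $t=k$ term $\ell c\,\mathcal{E}_n\,k(1-d)^{k-1}$, so that
\[ \|\tilde R^{(k)} - \mathcal{R}^{(k)}\|_\infty \leq \ell c\,\mathcal{E}_n\sum_{t=1}^{k}t(1-d)^{t-1} \leq \ell c\,\mathcal{E}_n\sum_{t=1}^{\infty}t(1-d)^{t-1} = \frac{\ell c}{d^2}\,\mathcal{E}_n, \]
uniformly in $k$, which is the claim. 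I expect the main obstacle to be the stability estimate $\|M^{(n)}-M\|_\infty\leq \mathcal{E}_n$ together with the bookkeeping needed to pass from $\tilde M$ on $\mathbb{R}^n$ to $M^{(n)}$ on $\mathbb{R}^K$ (discarding the $O(1/n)$ self-exclusion term and the $\{d_i^-=0\}$ contribution to $\breve W$); everything else is either an exact cancellation or a summation of the binomial weights.
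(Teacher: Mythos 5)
Your proposal follows the paper's proof essentially step for step: cancel the $\mathbf{W}$ and $\mathbf{R}^{(0)}$ terms, collapse $\tilde M^s \breve{W}$ to powers of an empirical $K\times K$ matrix (your $M^{(n)}$ is exactly the paper's $\breve M$ from \eqref{eq:BreveM}), prove the stability estimate $\| \breve M - M \|_\infty \leq \mathcal{E}_n$, telescope to get $\| \breve M^s - M^s \|_\infty \leq s\,\mathcal{E}_n$, and sum against the weights $a_{s,t}$ via \eqref{eq:BinMean} to reach $\ell c\, \mathcal{E}_n/d^2$. The only difference is that the paper treats the reduction you hedge on as an exact identity---it proves $(\tilde M^s \breve{W})_{ij} = (\breve M^s \bar{W})_{J_i j}$ by induction, rather than ``up to $O(1/n)$ self-exclusion and $\{d_i^-=0\}$ corrections''---which is how it lands the clean constant with no residual error terms.
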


\begin{proof}
Note that for any $1 \leq j \leq K$, 
\begin{align*}
&\left\| \tilde R^{(k)}  - \mathcal{R}^{(k)}  \right\|_\infty  \\
&=  \max_{1\leq i\leq n} \sum_{j=1}^\ell \left| 1(k\geq2) \sum_{t=1}^{k-1}\sum_{s=1}^t a_{s,t} (\tilde{M}^s \breve{W})_{ij}  +\sum_{s=1}^k a_{s,k} (\tilde{M}^s \breve{R})_{ij}  - \right. \\ &\hspace{40mm}
\left. -1(k\geq2) \sum_{t=1}^{k-1}\sum_{s=1}^t a_{s,t} (M^s \bar{W})_{J_i j}-\sum_{s=1}^ka_{s,k} (M\bar{R})_{J_i j} \right| \\ &=
\max_{1 \leq i \leq n} \sum_{j=1}^\ell \left| 1(k\geq 2) \sum_{t=1}^{k-1} \sum_{s=1}^t a_{s,t} \left( (\tilde M^s \breve{W})_{ij} -  (M^s \bar{W})_{J_i j}  \right)  + \sum_{s=1}^k a_{s,k} \left( (\tilde M^s \breve{R})_{ij} -  (M^s \bar{R})_{J_i j} \right) \right| \\ &\leq
1(k\geq 2) \sum_{t=1}^{k-1} \sum_{s=1}^t a_{s,t} \max_{1\leq i \leq n} \sum_{j=1}^\ell \left| (\tilde M^s \breve{W})_{ij} -  (M^s \bar{W} )_{J_i j}   \right| +  \sum_{s=1}^k a_{s,k} \max_{1 \leq i \leq n} \sum_{j=1}^\ell \left| (\tilde M^s \breve{R} )_{ij} -  (M^s \bar{R} )_{J_i j} \right| .
\end{align*}

Note that $\tilde M \in \mathbb{R}^{n\times n}$ is such that all the rows $\mathcal{S}_r = \{ \tilde M_{i \bullet} : J_i = r \}$ are equal to each other, all the columns $\{ \tilde M_{\bullet j}: J_j = r \}$ are equal to each other, and the cardinality of the $\mathcal{S}_r$ is $n \pi_r^{(n)}$, for $1 \leq r \leq K$. Similarly, the vector $\breve{W}_{\bullet j} \in \mathbb{R}^n$ satisfies that all the components $\{ \breve{W}_{i j}: J_i = r\}$ are equal to each other and the order in which the vertex labels appear is the same as the columns of $\tilde M$. Define the matrix $\breve M \in \mathbb{R}^{K \times K}$ according to
\begin{equation} \label{eq:BreveM}
\breve M_{rs} =  \frac{\beta_{r, s}  \pi_s^{(n)} \kappa(s, r)}{ \beta_{r, 1} \pi_1^{(n)} \kappa(1,r) + \dots + \beta_{r, K} \pi_K^{(n)} \kappa(K, r)}, \qquad 1 \leq r,s \leq K,
\end{equation}
with $\breve M_{rs} \equiv 0$ if the denominator is zero, and note that
$$\breve M_{rs} = \sum_{j=1}^n \tilde M_{ij} 1(J_j = s)  \qquad \text{for any $i$ such that $J_i = r$}.$$
It follows that for any $i \in V_n$,
\begin{equation}\label{Mtilde_Mbreve}
    (\tilde M \breve{W} )_{ij}  = \sum_{l=1}^n \tilde M_{il} \breve{W}_{lj} = \sum_{r=1}^K \sum_{l=1}^n  \tilde M_{il} \breve{W}_{l j} 1(J_l = r) = \sum_{r=1}^K \breve M_{J_i,r} \bar{W}_{rj} = (\breve M \bar{W})_{J_i j}.
\end{equation}
Now suppose that $(\tilde M^{s-1} \breve{W})_{ij} = (\breve{M}^{s-1} \bar{W})_{J_i j}$ for $s \geq 2$, and note that
\begin{align*}
(\tilde M^s \breve{W})_{ij} &= \sum_{l=1}^n \tilde M_{il} (\tilde M^{s-1} \breve{W})_{lj} = \sum_{r=1}^K  \sum_{l=1}^n \tilde M_{il} (\breve{M}^{s-1} \bar{W})_{J_l j} 1(J_l = r) =\sum_{r=1}^K \breve{M}_{J_i r} (\breve{M}^{s-1} \bar{W})_{r j} = (\breve{M}^s \bar W)_{J_i j},
\end{align*}
therefore, by induction, we conclude that
$$(\tilde M^s \breve{W})_{ij} = (\breve M^s \bar{W})_{J_i j} \qquad \text{for all } s \geq 1 \text{ and all } i \in V_n.$$
The same arguments also yield $(\tilde M^s \breve{R})_{ij} = (\breve M^s \bar{R})_{J_i j}$ for all $s \geq 1$ and all $i \in V_n$. 
Hence,
\[
\max_{1\leq i \leq n} \sum_{j=1}^\ell \left| (\tilde M^s \breve{W})_{ij} -  (M^s \bar{W})_{J_i j}   \right| = \max_{1\leq i \leq n} \sum_{j=1}^\ell \left| (\breve M^s \bar{W})_{J_i j} - (M^s \bar{W})_{J_i j} \right| = \| ( \breve M^s - M^s) \bar W \|_\infty \]
and
\[
 \max_{1\leq i \leq n} \sum_{j=1}^\ell \left| (\tilde M^s \bar{R})_{ij} -  (M^s \bar{R})_{J_i j}   \right| = \| (\breve M^s - M^s) \bar{R} \|_\infty.
\]
The first inequality now becomes
\begin{align*}
\left\| \tilde R^{(k)}  - \mathcal{R}^{(k)}  \right\|_\infty 
&\leq  1(k\geq 2) \sum_{t=1}^{k-1} \sum_{s=1}^t a_{s,t}  \| (\breve M^s - M^s) \bar{W} \|_\infty  +  \sum_{s=1}^k a_{s,k}  \| (\breve M^s  - M^s) \bar{R} \|_\infty \\
&\leq 1(k\geq 2) \sum_{t=1}^{k-1} \sum_{s=1}^t a_{s,t}  \| \breve M^s  - M^s \|_\infty \|  \bar W  \|_\infty  +  \sum_{s=1}^k a_{s,k}  \| \breve M^s  - M^s \|_\infty \| \bar{R} \|_\infty \\
&\leq \ell \sum_{t=1}^{k} \sum_{s=1}^t a_{s,t}  \| \breve M^s  - M^s \|_\infty,
\end{align*}
where in the last step we used the observation that $\| \bar W \|_\infty \leq  \ell$ and $\| \bar{R} \|_\infty \leq \ell$. For the remaining norm, note that  
\begin{align*}
x_s &:= \| \breve M^s  - M^s \|_\infty \leq \| \breve M^s  - \breve M^{s-1} M \|_\infty + \| \breve M^{s-1} M  - M^s \|_\infty \\
&\leq \| \breve M \|_\infty^{s-1} \| \breve M - M \|_\infty + \| \breve M^{s-1} - M^{s-1} \|_\infty \| M \|_\infty \leq \| \breve M - M \|_\infty  + x_{s-1} \\
&\leq s \| \breve M - M \|_\infty,
\end{align*}
since $x_0=0$ and $\|M\|_\infty=\|\breve{M}\|_\infty=1$.
Finally, note that for $\mathcal{I}$ the set of non-zero rows of $M$ (same as the non-zero rows of $\breve M$), we have
\begin{align*}
\| \breve M - M \|_\infty &= \max_{r \in \mathcal{I}} \sum_{s=1}^K  \left|  \frac{ \beta_{r, s}  \pi_s^{(n)} \kappa(s, r)}{ \beta_{r, 1} \pi_1^{(n)} \kappa(1,r) + \dots + \beta_{r, K} \pi_K^{(n)} \kappa(K, r)} -  \frac{ \beta_{r, s}  \pi_s \kappa(s, r)}{ \beta_{r, 1} \pi_1 \kappa(1,r) + \dots + \beta_{r, K} \pi_K \kappa(K, r)} \right| \\[5pt] &=
\max_{r \in \mathcal{I}} \dfrac{\sum_{s=1}^K \beta_{r,s} \kappa(s,r) \left| (\pi_s^{(n)} \pi_1 - \pi_s \pi_1^{(n)}) \beta_{r, 1}  \kappa(1,r) + \dots + (\pi_s^{(n)} \pi_K - \pi_s \pi_K^{(n)})  \beta_{r, K}  \kappa(K, r))  \right| }{(\beta_{r, 1} \pi_1^{(n)} \kappa(1,r) + \dots + \beta_{r, K} \pi_K^{(n)} \kappa(K, r) ) (  \beta_{r, 1} \pi_1 \kappa(1,r) + \dots + \beta_{r, K} \pi_K \kappa(K, r))} \\[5pt] &=
\max_{r \in \mathcal{I}} \dfrac{\sum_{s=1}^K\beta_{r,s}\pi_s\kappa(s,r)\sum_{l=1}^K\left| \frac{\pi_s^{(n)} \pi_l - \pi_s \pi_l^{(n)}}{\pi_l^{(n)} \pi_s} \right|\beta_{r,l}\pi_l^{(n)}\kappa(l,r)}{\sum_{m=1}^K\beta_{r,m}\pi_m^{(n)}\kappa(m,r)\sum_{n=1}^K\beta_{r,n}\pi_n\kappa(n,r)} \\[5pt] &\leq
\max_{1\leq r,s \leq K} \left| \frac{\pi_s^{(n)} \pi_r - \pi_s \pi_r^{(n)}}{\pi_r^{(n)} \pi_s} \right| = \mathcal{E}_n.
\end{align*}
Combining our estimates, we deduce
\begin{align*}
\left\| \tilde{R}^{(k)}  - \mathcal{R}^{(k)}   \right\|_\infty 
&\leq \ell \mathcal{E}_n\sum_{t=1}^{k} \sum_{s=1}^t a_{s,t} s = \ell c\,\mathcal{E}_n \sum_{t=1}^k (1-d)^{t-1} t  \leq \ell c\,\mathcal{E}_n\sum_{t=1}^\infty (1-d)^{t-1}t \leq \frac{\ell c}{d^2} \cdot \mathcal{E}_n ,
\end{align*}
where the last equality comes from \eqref{eq:BinMean}. Taking the supremum over $k$ completes the proof. 
\end{proof}

The proof of Theorem~\ref{MFA_generalthm} is based on the estimation in Theorem~\ref{T.interm_pnorm} of the expected norms
\begin{equation} \label{eq:InfinityNorms}
\mathbb{E}_n\left[ \left\| (C - \tilde M) \breve{X} \right\|_\infty \right] \qquad \text{and} \qquad \mathbb{E}_n\left[ \left\| C X- \tilde{M} \breve{X} \right\|_\infty \right] 
\end{equation}
for $X$ conditionally independent of $C$ given $\mathscr{F}_n = \sigma\left( \mathbf{Y}_i: i \in V_n \right)$ and having mean $\breve X = \mathbb{E}_n[X]$, when $\theta_n \geq \gamma \log n$, and of the expected norms
\begin{equation} \label{eq:OneNorms}
\max_{i \in V_n} \mathbb{E}_n\left[ \left\| (C^s X)_{i\bullet} - (\tilde{M}^s \breve{X})_{i\bullet} \right\|_1 \right] 
\end{equation}
for $s \geq 1$, when $\theta_n/\log n \to 0$ as $n \to \infty$. This is done in the next two sections.

\subsection{The semi-sparse to dense regimes}\label{SS.SemiDense}

When the edge density in the graph is sufficiently large, the concentration of the matrix $C$ around the matrix $\tilde M$ is very strong. It is worth pointing out that $\tilde M$ is not the mean of $C$, but rather, its more tractable asymptotic mean. For the norm involving $R^{(0)}$, the concentration occurs around its mean matrix $\bar{R}$, and the strength of this concentration is determined by the matrix $\tilde M$. The main result in this section provides estimates for the two expected norms in \eqref{eq:InfinityNorms}. Its proof relies on the following concentration inequality for Binomial or Poisson random sums, and its consequence on ratios of random sums.

\begin{prop} \label{P.NewConcentration}
Let $\{ Y_i^{(r)}: i \geq 1, 1\leq r \leq K\}$ be independent random variables on $[-H,H]$, with $\{ Y_i^{(r)}: i \geq 1\}$ i.i.d.~for each $1\leq r \leq K$, and $E[(Y_1^{(r)})^2] \leq v_r$. Let $\{ N_r: 1\leq r \leq K\}$ be independent random variables on $\mathbb{N}$, independent of $\{Y_i^{(r)}: i \geq 1, 1 \leq r \leq K\}$, and satisfying $E[e^{s N_r}] \leq e^{E[N_r] (e^s-1)}$ for $s \in \mathbb{R}$. Define $S_r = \sum_{i=1}^{N_r} Y_i^{(r)}.$
Then, for any $\epsilon>0$, 
\[ P\left(  \sum_{r=1}^K (S_r - E[S_r]) > \epsilon  \mu \right) \leq \text{exp}\left(  - \frac{( \epsilon \mu)^2}{2 \nu}   +   H   \frac{(\epsilon \mu)^3}{2 \nu^2}   \right)  , \]
where $\mu = \sum_{r=1}^K E[N_r] E[|Y_1^{(r)}|]$ and $\nu = \sum_{r=1}^K E[N_r] v_r$. 
\end{prop}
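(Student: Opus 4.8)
The plan is to run an exponential Chernoff bound and reduce everything to a compound (Bennett-type) moment estimate, the only twist being that the number of summands in each $S_r$ is random and governed by the Poisson-type MGF hypothesis. Write $t = \epsilon \mu$ and fix $\lambda > 0$. Applying Markov's inequality to $e^{\lambda(\cdot)}$ and using the independence of the $S_r$ across $r$, I would first reduce to a product of one-block moment generating functions,
\[ P\left( \sum_{r=1}^K (S_r - E[S_r]) > t \right) \le e^{-\lambda t} \prod_{r=1}^K E\left[ e^{\lambda(S_r - E[S_r])} \right], \]
so that it suffices to control each factor $E[e^{\lambda(S_r - E[S_r])}]$ separately.

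The next step handles the random number of summands. For each $r$, conditioning on $N_r$ and using that the $Y_i^{(r)}$ are i.i.d.\ and independent of $N_r$, I set $\phi_r(\lambda) := E[e^{\lambda Y_1^{(r)}}] > 0$ and obtain
\[ E\left[ e^{\lambda S_r} \right] = E\left[ \phi_r(\lambda)^{N_r} \right] = E\left[ e^{N_r \log \phi_r(\lambda)} \right] \le \exp\left( E[N_r](\phi_r(\lambda) - 1) \right), \]
where the last inequality invokes the hypothesis $E[e^{sN_r}] \le e^{E[N_r](e^s - 1)}$ with $s = \log \phi_r(\lambda) \in \mathbb{R}$. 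Since $E[S_r] = E[N_r] E[Y_1^{(r)}]$, centering converts this into $E[e^{\lambda(S_r - E[S_r])}] \le \exp\big( E[N_r]\, E[e^{\lambda Y_1^{(r)}} - 1 - \lambda Y_1^{(r)}] \big)$. I then bound the inner expectation by Taylor expansion, using $E[(Y_1^{(r)})^k] \le E[|Y_1^{(r)}|^k] \le H^{k-2} E[(Y_1^{(r)})^2] \le H^{k-2} v_r$ for $k \ge 2$ (valid termwise because $\lambda^k > 0$), to get
\[ E\left[ e^{\lambda Y_1^{(r)}} - 1 - \lambda Y_1^{(r)} \right] = \sum_{k \ge 2} \frac{\lambda^k E[(Y_1^{(r)})^k]}{k!} \le \frac{v_r}{H^2}\left( e^{\lambda H} - 1 - \lambda H \right). \]
Multiplying by $E[N_r]$, summing over $r$, and recalling $\nu = \sum_{r} E[N_r] v_r$ yields the Bennett-type estimate
\[ P\left( \sum_{r=1}^K (S_r - E[S_r]) > t \right) \le \exp\left( -\lambda t + \frac{\nu}{H^2}\left(e^{\lambda H} - 1 - \lambda H\right) \right). \]

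To finish I would choose $\lambda = t/\nu$ and write $u = \lambda H = H t / \nu$, rather than optimizing exactly (which would produce the genuine Bennett rate $h(u) = (1+u)\log(1+u) - u$ and not the stated cubic expression). Here I split on the size of $u$: if $u \ge 1$, then the claimed bound $\exp\big(-t^2/(2\nu) + H t^3/(2\nu^2)\big)$ has nonnegative exponent, hence is $\ge 1$, and the inequality holds trivially; if $u < 1$, I use the elementary inequality $e^u - 1 - u \le u^2/2 + u^3/2$, valid on $[0,1]$ since $\sum_{k \ge 3} u^k/k! \le u^3 \sum_{k \ge 3} 1/k! = u^3(e - 5/2) \le u^3/2$. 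Substituting this back and using $\frac{\nu}{H^2}\cdot \frac{u^2}{2} = \frac{t^2}{2\nu}$ and $\frac{\nu}{H^2}\cdot \frac{u^3}{2} = \frac{Ht^3}{2\nu^2}$ turns the exponent $-t^2/\nu + \frac{\nu}{H^2}(e^{u}-1-u)$ into exactly $-t^2/(2\nu) + H t^3/(2\nu^2)$, which is the assertion with $t = \epsilon \mu$.

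The main obstacle is the compound-sum step: correctly pushing the exponential moment bound through the random number of summands and invoking the Poisson-type MGF hypothesis at $s = \log \phi_r(\lambda)$, which is what makes the variance proxy $\nu$ (and not a cruder bound) emerge. A secondary but essential observation is that the stated inequality only needs to be established for $u < 1$, since it is vacuous once $u \ge 1$; this is precisely what licenses replacing the sharp but unwieldy Bennett rate by the simple cubic correction, at the cost of a bound that is informative only in the moderate-deviation regime $H\epsilon\mu < \nu$.
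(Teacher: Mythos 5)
Your proof is correct and follows essentially the same route as the paper's: a Chernoff bound, the compound-sum step pushing the exponential moment through $N_r$ via the hypothesis $E[e^{sN_r}]\le e^{E[N_r](e^s-1)}$, and the Taylor estimate $E[e^{\lambda Y_1^{(r)}}-1-\lambda Y_1^{(r)}]\le \frac{v_r}{H^2}(e^{\lambda H}-1-\lambda H)$, arriving at the same exponent $-\lambda \epsilon\mu+\frac{\nu}{H^2}(e^{\lambda H}-1-\lambda H)$. The only difference is the final optimization: the paper takes the Bennett-type choice $\lambda=\frac1H\log(1+H\epsilon\mu/\nu)$ together with $\log(1+x)\ge x-x^2/2$, which gives the stated cubic bound uniformly in $\epsilon$, whereas you take $\lambda=\epsilon\mu/\nu$ and split on $u=H\epsilon\mu/\nu\gtrless 1$ --- observing that the claimed bound is vacuous for $u\ge 1$ and using $e^u-1-u\le u^2/2+u^3/2$ on $[0,1]$ --- which is equally valid.
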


\begin{proof}
Start by conditioning on $\{N_r: 1\leq r \leq K\}$ and use Chernoff's bound to obtain that for any $\theta > 0$,
\begin{align*}
P\left( \sum_{r=1}^K (S_r - E[S_r]) > \epsilon \mu \right) &= E\left[  P\left( \left. \sum_{r=1}^K (S_r - E[S_r]) > \epsilon \mu \right| N_1, \dots, N_K \right) \right] \\
&\leq E\left[ e^{-\theta \epsilon \mu}  \prod_{r=1}^K E\left[ \left. e^{\theta (S_r- E[S_r])} \right| N_r \right] \right] = e^{-\theta \epsilon \mu}  \prod_{r=1}^K e^{-\theta E[S_r]}  E\left[ E\left[ e^{\theta Y_1^{(r)}}  \right]^{N_r}  \right] .
\end{align*}
Using the third order Taylor approximation for $e^{\theta x}$, we obtain 
\begin{align*}
E[ e^{\theta Y_1^{(r)}}] &\leq 1 + \theta E[Y_1^{(r)}] + \sum_{k=2}^\infty \frac{\theta^k E[|Y_1^{(r)}|^k]}{k!}  \leq 1 + \theta E[Y_1^{(r)}] + \left( \sum_{k=2}^\infty \frac{\theta^k H^{k-2}}{k!} \right)  E[(Y_1^{(r)})^2]  \\
&\leq 1 + \theta E[Y_1^{(r)}] +  \frac{v_r }{H^2} (e^{\theta H} -\theta H - 1) =: 1+ \lambda_r(\theta). 
\end{align*}
It follows from the assumption on the moment generating function of $N_r$ and the observation $E[S_r] = E[N_r] E[Y_1^{(r)}]$, that
\begin{align*}
P\left( \sum_{r=1}^K (S_r - E[S_r]) > \epsilon \mu \right) &\leq e^{-\theta \epsilon \mu} \prod_{r=1}^K e^{-\theta E[S_r]}   E\left[ (1+\lambda_r(\theta))^{N_r}  \right] = e^{-\theta \epsilon \mu}  \prod_{r=1}^K e^{-\theta E[S_r]} E\left[ e^{N_r\log(1+\lambda_r(\theta) )}  \right] \\
&\leq e^{-\theta \epsilon \mu}  \prod_{r=1}^K e^{ -\theta E[S_r] +  E[N_r] (e^{\log(1+\lambda_r(\theta))} -1) } = e^{-\theta \epsilon \mu +  \sum_{r=1}^K E[N_r] (\lambda_r(\theta) - \theta E[Y_1^{(r)}]  )} \\
&= e^{-\theta \epsilon \mu +   \frac{\nu}{H^2} (e^{\theta H} - \theta H - 1)} .
\end{align*}

Picking $\theta =  \frac{1}{H} \log\left(1 + H \epsilon \mu/\nu \right) \geq \epsilon \mu/\nu - (H \epsilon^2 \mu^2)/(2\nu^2)$, we get
\begin{align*}
P\left( \sum_{r=1}^K (S_r - E[S_r]) > \epsilon \mu \right) &\leq e^{-\theta \epsilon \mu + \frac{\nu}{H} ( \epsilon\mu/\nu-\theta )} \leq  e^{-\left(\frac{\epsilon \mu}{\nu} - \frac{H \epsilon^2 \mu^2}{2\nu^2} \right)(\epsilon \mu + \nu/H) +  \frac{\epsilon \mu}{H}  } = e^{-\frac{\epsilon^2 \mu^2}{2\nu}  + \frac{H \epsilon^3 \mu^3}{2\nu^2}  }.
\end{align*}
\end{proof}

Next, we use Proposition~\ref{P.NewConcentration} to bound the event that ratios of random sums are far from the ratios of their means. 

\begin{lemma} \label{L.Ratios}
Let $\{ B_i^{(r)}, X_i^{(r)}: i \geq 1, 1\leq r \leq K\}$ be independent random variables, with $B_i^{(r)} \in [0,H]$, $X_i^{(r)} \in [-1,1]$, and such that $\{ (B_i^{(r)}, X_i^{(r)}): i \geq 1\}$ are i.i.d.~for each $1\leq r \leq K$. Let $\{ N_r: 1\leq r \leq K\}$ be independent random variables on $\mathbb{N}$, independent of the $\{(B_i^{(r)}, X_i^{(r)}): i \geq 1, 1 \leq r \leq K\}$, and satisfying $E[e^{s N_r}] \leq e^{E[N_r] (e^s-1)}$ for $s \in \mathbb{R}$. Define $S_r = \sum_{i=1}^{N_r} B_i^{(r)}$ and $\tilde S_r = \sum_{i=1}^{N_r} X_i^{(r)} B_i^{(r)}$. Then, for any $\epsilon>0$, 
\[ P\left( \left|  \frac{\tilde S_1 + \dots + \tilde S_K}{S_1+\dots+ S_K} - \frac{ E[\tilde S_1] + \dots + E[\tilde S_K]}{E[S_1] + \dots + E[S_K]}   \right| > \epsilon \right) \leq 4 \text{exp}\left( - \frac{(\epsilon/2)^2 \mu^2 }{2 \nu} + \frac{H (\epsilon/2)^3 \mu^3}{2\nu^2}  \right), \]
where $\mu = \sum_{r=1}^K E[N_r] E[B_1^{(r)}]$ and $\nu = \sum_{r=1}^K E[N_r] E[(B_1^{(r)})^2]$. 
\end{lemma}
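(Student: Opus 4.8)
The plan is to reduce the two‑sided control of the ratio to four one‑sided deviation estimates for the numerator sum $\tilde S := \tilde S_1 + \dots + \tilde S_K$ and the denominator sum $S := S_1 + \dots + S_K$ about their means, and then to invoke Proposition~\ref{P.NewConcentration} for each of the four. Set $b := E[S] = \sum_{r=1}^K E[N_r] E[B_1^{(r)}] = \mu$ and $a := E[\tilde S] = \sum_{r=1}^K E[N_r] E[X_1^{(r)} B_1^{(r)}]$, and note that $|a| \le \mu = b$ since $|X_i^{(r)}| \le 1$. Because $b > 0$, the object to control is $\tilde S/S - a/b$.

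The first step is a purely deterministic containment. Using $b = \mu$, the identity
\[ \frac{\tilde S}{S} - \frac{a}{b} = \frac{\tilde S - a}{S} - \frac{a}{b}\cdot\frac{S - b}{S} \]
together with $|a|/b \le 1$ gives $\left|\tilde S/S - a/b\right| \le (|\tilde S - a| + |S - b|)/S$. Off the denominator's lower‑tail event one has $S \ge (1-\delta)\mu > 0$, so whenever both $|\tilde S - a| \le \delta\mu$ and $|S - b| \le \delta\mu$ the ratio deviation is at most $2\delta/(1-\delta)$. Choosing the threshold $\delta$ as a fixed multiple of $\epsilon$ — essentially $\delta = \epsilon/2$, up to the elementary adjustment absorbing the $1/(1-\delta)$ factor — this forces $\left|\tilde S/S - a/b\right| \le \epsilon$. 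Hence the event in the statement is contained in the union of the four one‑sided events $\{\tilde S - a > \delta\mu\}$, $\{a - \tilde S > \delta\mu\}$, $\{S - b > \delta\mu\}$, $\{b - S > \delta\mu\}$.

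The second step applies Proposition~\ref{P.NewConcentration} to each. For the denominator I take $Y_i^{(r)} = B_i^{(r)} \in [0,H]$ (and $Y_i^{(r)} = -B_i^{(r)}$ for the lower tail); for the numerator I take $Y_i^{(r)} = X_i^{(r)} B_i^{(r)} \in [-H,H]$ (and its negative). The crucial bookkeeping is to use the common second‑moment bound $v_r = E[(B_1^{(r)})^2]$ in all four applications: this is exact for the denominator and valid for the numerator since $E[(X_1^{(r)} B_1^{(r)})^2] \le E[(B_1^{(r)})^2]$, and it renders the variance proxy $\nu = \sum_{r} E[N_r] E[(B_1^{(r)})^2]$ identical across the four bounds. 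Since the conclusion of Proposition~\ref{P.NewConcentration} depends on its threshold only through its value $\epsilon\mu$, I may apply it with the common threshold $t = \delta\mu$, even though the numerator's intrinsic parameter $\sum_r E[N_r] E[|X_1^{(r)} B_1^{(r)}|]$ is smaller than $\mu$. Each probability is then at most $\exp\!\left(-t^2/(2\nu) + H t^3/(2\nu^2)\right)$ with $t = (\epsilon/2)\mu$, the hypotheses on the $N_r$ and the independence/i.i.d.\ structure transferring verbatim from the lemma to the proposition; a union bound over the four events produces the factor $4$ and the stated exponential.

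The main obstacle is the deterministic containment, specifically the constant in the threshold: the triangle inequality yields only $2\delta/(1-\delta)$ rather than $2\delta$ on the good event, so strictly one must take $\delta$ slightly below $\epsilon/2$ (the sharp value being $\epsilon/(2+\epsilon)$) to guarantee a deviation of at most $\epsilon$. An alternative that sidesteps the triangle‑inequality loss is to treat each one‑sided event directly: $\{\tilde S/S > a/b + \epsilon\}$ equals $\{b\tilde S - (a+\epsilon b)S > 0\}$, and the left‑hand side is the single centered random sum $\sum_{r}\sum_{i \le N_r} B_i^{(r)}\big(b X_i^{(r)} - a - \epsilon b\big)$, whose mean is $-\epsilon b^2$; applying Proposition~\ref{P.NewConcentration} after rescaling by $b(2+\epsilon)$ (to bound $|Y| \le H$) recovers a bound of the same form. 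Either route costs only a harmless constant, so the clean threshold $\epsilon/2$ is the natural target recorded in the statement.
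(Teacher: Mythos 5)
Your overall strategy coincides with the paper's: reduce the ratio deviation to concentration of the numerator sum $\tilde S := \tilde S_1+\dots+\tilde S_K$ and the denominator sum $S := S_1+\dots+S_K$ around their means $a := E[\tilde S]$ and $b := E[S]=\mu$, apply Proposition~\ref{P.NewConcentration} with the common variance proxy $v_r = E[(B_1^{(r)})^2]$ (correct, since $E[(X_1^{(r)}B_1^{(r)})^2]\le E[(B_1^{(r)})^2]$), feed the threshold into the proposition by value rather than through the numerator's intrinsic mean parameter (this matches the paper's rescaling $\epsilon\mu/(2\tilde\mu)$), and finish with a union bound over four one-sided events. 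All of that is sound.

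The gap is in your deterministic containment step, and it is genuine: your decomposition cannot deliver the stated constant $\epsilon/2$. Writing $\tilde S/S - a/b = (\tilde S - a)/S - (a/b)(S-b)/S$ leaves the random denominator $S$ in place, so you must restrict to the lower-tail event $S \ge (1-\delta)\mu$ and pay the factor $1/(1-\delta)$; the containment then forces $\delta \le \epsilon/(2+\epsilon) < \epsilon/2$, as you note. Since the exponent $g(t) = -t^2\mu^2/(2\nu)+Ht^3\mu^3/(2\nu^2)$ is decreasing in $t$ throughout the regime where the bound is nontrivial, the bound you obtain at $\delta=\epsilon/(2+\epsilon)$ is strictly weaker than the one claimed; there is no ``elementary adjustment'' that recovers $\epsilon/2$ within your decomposition. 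Your alternative route does not sidestep the loss either: to apply the proposition to $\sum_r\sum_{i\le N_r}B_i^{(r)}(bX_i^{(r)}-a-\epsilon b)$ you must rescale by $b(2+\epsilon)$ to meet the $[-H,H]$ hypothesis, and the threshold becomes $\epsilon\mu/(2+\epsilon)$ once more. The missing idea is the paper's cross-multiplication, which exploits the pointwise bound $|\tilde S_t|\le S_t$ (not merely $|a|\le b$): inserting the term $S\tilde S$,
\[ |b\tilde S - aS| \le |\tilde S|\,|b-S| + S\,|\tilde S - a| \le S\left(|S-b| + |\tilde S - a|\right), \]
so the random $S$ cancels against the denominator $Sb$ and
\[ \left|\frac{\tilde S}{S}-\frac{a}{b}\right| \le \frac{|S-E[S]| + |\tilde S - E[\tilde S]|}{\mu}, \]
with no lower-tail event and no $1/(1-\delta)$ factor. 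Splitting at $\epsilon\mu/2$ then yields exactly the four one-sided events at threshold $\epsilon\mu/2$ and hence the stated bound. (The weaker inequality you prove, with $\epsilon/(2+\epsilon)$ in place of $\epsilon/2$, would in fact suffice for the paper's downstream applications, but it is not the inequality stated in the lemma.)
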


\begin{proof}
To start, note that since $|\tilde S_r| \leq S_r$ for each $r \in \{1, \dots, K\}$, then
\begin{align*}
&\left|  \frac{\tilde S_1 + \dots + \tilde S_K}{S_1+\dots+ S_K} - \frac{ E[\tilde S_1] + \dots + E[\tilde S_K]}{E[S_1] + \dots + E[S_K]}   \right|  \\
&=  \frac{1}{(S_1+\dots + S_K) E[S_1 + \dots + S_K]} \left| \sum_{r=1}^K  \sum_{t=1}^K E[S_r] \tilde S_t - \sum_{r=1}^K  \sum_{t=1}^K S_r E[ \tilde S_t] \right|  \\
&\leq  \frac{1}{(S_1+\dots + S_K) \mu} \left( \left| \sum_{r=1}^K  \sum_{t=1}^K E[S_r] \tilde S_t - \sum_{r=1}^K  \sum_{t=1}^K S_r \tilde S_t \right| + \left| \sum_{r=1}^K  \sum_{t=1}^K S_r \tilde S_t  - \sum_{r=1}^K  \sum_{t=1}^K S_r E[ \tilde S_t] \right|  \right) \\
&= \frac{1}{(S_1+\dots + S_K) \mu} \left( \left| \sum_{t=1}^K \tilde S_t \right| \left| \sum_{r=1}^K (E[S_r] - S_r)    \right| + \sum_{r=1}^K S_r \left|  \sum_{t=1}^K ( \tilde S_t - E[\tilde S_t]) \right|  \right) \\
&\leq   \frac{1}{ \mu} \left( \left| \sum_{r=1}^K (E[S_r] - S_r)    \right| +  \left|  \sum_{t=1}^K ( \tilde S_t - E[\tilde S_t]) \right|  \right) .
\end{align*} 
Next, define the events
\[ F = \left\{ \left| \sum_{r=1}^K (S_r - E[S_r]) \right| > \epsilon \mu/2 \right\} \qquad \text{and} \qquad G = \left\{ \left| \sum_{r=1}^K (\tilde S_r - E[\tilde S_r]) \right| > \epsilon \mu/2 \right\},   \]
and use the union bound to obtain that
\begin{align*}
P\left( \left|  \frac{\tilde S_1 + \dots + \tilde S_K}{S_1+\dots+ S_K} - \frac{ E[\tilde S_1] + \dots + E[\tilde S_K]}{E[S_1] + \dots + E[S_K]}   \right| > \epsilon \right) &\leq P\left(  \left| \sum_{r=1}^K (E[S_r] - S_r)    \right| +  \left|  \sum_{t=1}^K ( \tilde S_t - E[\tilde S_t]) \right|  > \epsilon \mu \right) \\[5pt]
&\leq P(F^c) + P(G^c ) .
\end{align*}
Finally, let $\tilde \mu = \sum_{r=1}^K E[N_r] E[|X_1^{(r)}|] E[B_1^{(r)}]$, and use Proposition~\ref{P.NewConcentration} with $Y_i^{(r)} = B_i^{(r)}$ and $Y_i^{(r)} = - B_i^{(r)}$ to obtain that
\[P(F^c) \leq 2 \text{exp}\left( - \frac{(\epsilon/2)^2 \mu^2}{2 \nu} + \frac{H (\epsilon/2)^3 \mu^3}{2 \nu^2}  \right),  \]
and then again with $Y_i^{(r)} = -X_i^{(r)} B_i^{(r)}$ and $Y_i^{(r)} = X_i^{(r)} B_i^{(r)}$ to get
\[ P(G^c) \leq 2  \text{exp}\left( - \frac{(\epsilon \mu/(2\tilde \mu))^2 \tilde\mu^2}{2 \nu} + \frac{H (\epsilon \mu/(2\tilde \mu))^3 \tilde \mu^3}{2 \nu^2}  \right) = 2 \text{exp}\left( - \frac{(\epsilon/2)^2 \mu^2 }{2 \nu} + \frac{H (\epsilon/2)^3 \mu^3}{2\nu^2}  \right).   \] 
This completes the proof.
\end{proof}

Now, we are able to prove the main result of this section. Recall that $\beta_{r,s} = E[ B_{ij} | J_i = r, J_j = s]$, $v_{r,s} = E[B_{ij}^2 | J_i = r, J_j = s]$ for $1 \leq r,s \leq K$,
\[ \mu_r^{(n)} = \sum_{s=1}^K \beta_{r,s} \pi_s^{(n)}\kappa(s,r) \qquad \text{and} \qquad \nu_r^{(n)} = \sum_{s=1}^K v_{r,s} \pi_s^{(n)}\kappa(s,r) , \]
for $1\leq r\leq K$ and large enough $n$, $\Delta_n = \max\limits_{r \in \mathcal{I}} \nu_r^{(n)}/(\mu_r^{(n)})^2$ and $\Lambda_n = \max\limits_{r \in \mathcal{I} } \mu_r^{(n)}/\nu_r^{(n)}$, where $\mathcal{I}$ is the set of nonzero rows of $M$.

\begin{theorem}\label{T.InfinityNorms} 
Let $X \in \mathbb{R}^{n\times \ell}$ be a random matrix conditionally independent of $C$ given $\mathscr{F}_n$ and having mean $\breve X = \mathbb{E}_n[X]$. Furthermore, suppose $X$ has conditionally independent rows $\{ \mathbf{X}_i: i \in V_n\}$ given $\mathscr{F}_n$ and satisfies $\| \mathbf{X}_i \|_\infty \leq 1$ a.s., and for each $1 \leq r \leq K$, its rows $\{ \mathbf{X}_i: J_i =r \}$ are  identically distributed.  Then, for $\theta_n \geq (2 H \Lambda_n/\gamma)^2 \Delta_n \log n$ and $0 < \gamma < 1/2$, there exists a constant $\Gamma' < \infty$ such that
\begin{align*}
\mathbb{E}_n\left[  \left\|  C X - \tilde M \breve X \right\|_\infty \right] &\leq \Gamma' \sqrt{\frac{\log n}{\theta_n}} ,\\
\mathbb{E}_n\left[ \left\| (C - \tilde M) \breve X \right\|_\infty \right]  &\leq \Gamma' \sqrt{\frac{\log n}{\theta_n}}.
\end{align*}
\end{theorem}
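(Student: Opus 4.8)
The plan is to prove both inequalities at once, since the second is simply the first applied to the deterministic matrix $\breve X$ (whose rows are bounded by $1$ in $\ell_\infty$-norm, are constant within communities, and are trivially independent of $C$, so all hypotheses hold). I therefore concentrate on $\mathbb{E}_n\left[ \| CX - \tilde M \breve X\|_\infty\right]$. Because $\|\cdot\|_\infty$ is the maximal absolute row sum and $\ell$ is a fixed finite number, it suffices to establish a tail bound for a single scalar deviation $(CX)_{ij} - (\tilde M \breve X)_{ij}$, uniform over $i \in V_n$ and $1 \le j \le \ell$, and then to control the maximum over the $n$ rows.

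First I would fix a vertex $i$ with $J_i = r$; if $r \notin \mathcal{I}$ the $i$th rows of both $C$ and $\tilde M$ vanish and there is nothing to prove, so assume $r \in \mathcal{I}$. Unwinding the normalization in $C$ gives $(CX)_{ij} = \bigl(\sum_l B_{il} 1(l\to i) X_{lj}\bigr)\big/\bigl(\sum_m B_{im} 1(m\to i)\bigr)$, a ratio of random sums. Grouping the potential inbound neighbors by community and using that, given $\mathscr{J}_n$, the edge indicators are independent Bernoulli$(\kappa(s,r)\theta_n/n)$, the weights are i.i.d.\ within communities, and the marks $\{X_{lj}\}$ are i.i.d.\ within communities and independent of the edges and weights, this ratio fits the form of Lemma~\ref{L.Ratios} exactly: the counts $N_s = \#\{l \to i : J_l = s\}$ are Binomial and hence satisfy the required moment generating function bound, and a short computation gives $\mu = \theta_n \mu_r^{(n)}$ and $\nu = \theta_n \nu_r^{(n)}$. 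The ratio-of-means center furnished by the lemma coincides with $(\tilde M \breve X)_{ij}$ up to the deletion of the self-term $l=i$, an $O(1/n)$ discrepancy that is negligible beside $\sqrt{\log n/\theta_n}$.

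With these identifications Lemma~\ref{L.Ratios} produces a bound $\le 4\exp\bigl(-\tfrac{(\epsilon/2)^2\theta_n (\mu_r^{(n)})^2}{2\nu_r^{(n)}} + \tfrac{H(\epsilon/2)^3 \theta_n (\mu_r^{(n)})^3}{2(\nu_r^{(n)})^2}\bigr)$. The quadratic term is $\ge \epsilon^2 \theta_n/(8\Delta_n)$ by definition of $\Delta_n$, while the ratio of the cubic correction to the quadratic term is $\tfrac{H\epsilon}{2}\cdot \tfrac{\mu_r^{(n)}}{\nu_r^{(n)}} \le \tfrac{H\Lambda_n}{2}\epsilon$. \textbf{This balance is the crux of the argument, and where I expect the main obstacle.} The hypothesis $\theta_n \ge (2H\Lambda_n/\gamma)^2 \Delta_n \log n$ is tailored precisely to it: at the scale $\epsilon \asymp \sqrt{\Delta_n \log n/\theta_n}$ the threshold forces $H\Lambda_n \sqrt{\Delta_n \log n/\theta_n} \le \gamma$, so the cubic correction is at most a $\gamma/4 < 1/8$ fraction of the quadratic one, leaving a genuine sub-Gaussian tail $\exp(-c\, \epsilon^2 \theta_n/\Delta_n)$ for $\epsilon \lesssim 1/(H\Lambda_n)$. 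Verifying that $\Delta_n$ correctly sets the Gaussian variance scale while $\Lambda_n$ controls the correction, and that the two interlock through this single threshold, is the delicate bookkeeping step.

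It remains to pass from the entrywise tail to the matrix norm without losing the rate. I would take $\epsilon = C_1 \sqrt{\Delta_n \log n/\theta_n}$ with $C_1$ a constant that $\gamma < 1/2$ permits to exceed the value needed to overcome the union-bound factor $n$, and then \emph{integrate} the entrywise tail over the $n\ell$ entries rather than apply a crude union bound against the trivial a.s.\ deviation bound; this yields a contribution of order $\sqrt{\Delta_n \log n/\theta_n} = O(\sqrt{\log n/\theta_n})$. For deviations past constant order, where the cubic term no longer helps, I would instead invoke the sharper Poisson-type estimate inside the proof of Proposition~\ref{P.NewConcentration}, whose bound is exponentially small in $\theta_n$ and therefore negligible once $\theta_n \gtrsim \log n$. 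Since $\Delta_n$ and $\Lambda_n$ converge to positive constants, collecting the pieces gives the claimed $\Gamma' \sqrt{\log n/\theta_n}$, and rerunning the identical argument with the deterministic marks $\breve X_{lj} \in [-1,1]$ delivers the second inequality.
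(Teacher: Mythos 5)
Your proposal is correct, and its core coincides with the paper's proof: the same reduction of the second inequality to the first (deterministic $X$), the same entrywise ratio-of-random-sums representation with binomial community counts satisfying the Poisson-type moment generating function bound, the same application of Lemma~\ref{L.Ratios} with $\mu = \theta_n\mu_r^{(n)}$ and $\nu = \theta_n\nu_r^{(n)}$, and the same choice $\epsilon \asymp \sqrt{\Delta_n\log n/\theta_n}$ played off against the cubic correction through the hypothesis on $\theta_n$. The one place you genuinely deviate is the passage from entrywise tails to the expected norm, and the difference is instructive. The paper does exactly what you chose to avoid: it introduces events $F_{it} = \{|(CX)_{it}-(\tilde M\breve X)_{it}|\leq 2\epsilon\}$, bounds $\mathbb{E}_n\left[\|CX-\tilde M\breve X\|_\infty\right] \leq 2\ell\epsilon + 2\sum_{i,t}\mathbb{P}_n(F_{it}^c)$ (the crude union bound against the trivial a.s.\ bound of $2$), and then pays for the missing factor of $\epsilon$ in the residual term by splitting into two regimes, $\theta_n \leq (\log n)^3$ and $(\log n)^3 < \theta_n \leq n/\kappa_*$: near the lower threshold the tail is only $n^{-2(1-\gamma)}$, which is not $O(\sqrt{\log n/\theta_n})$ uniformly in the density (e.g.\ $\gamma$ close to $1/2$ and $\theta_n$ close to $n$), but in the dense regime the correction term improves from $\gamma$ to $O(1/\log n)$, rescuing the bound. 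Your tail-integration variant recovers precisely the factor the crude bound loses, so the comparison with $\sqrt{\log n/\theta_n}$ becomes automatic and no case split on $\theta_n$ is needed; the price, which you correctly identify, is that you need tail control past the sub-Gaussian window $u \lesssim 1/(H\Lambda_n)$ where the cubic term in Lemma~\ref{L.Ratios} kills the bound. Your fix via the Bennett-type estimate inside Proposition~\ref{P.NewConcentration} works (and your ``$\theta_n \gtrsim \log n$'' suffices because the hypothesis supplies exactly the constant $(2H\Lambda_n/\gamma)^2\Delta_n$ needed to beat the union-bound factor $n$ there); an even cheaper route is to note that the tail is monotone in $u$, so the whole integral over $[c_0,2]$ with $c_0 \asymp 1/(H\Lambda_n)$ is bounded by a constant times the Lemma~\ref{L.Ratios} probability at $u = c_0$, which the threshold hypothesis makes at most $n^{-1/(4\gamma^2)}$ with $1/(4\gamma^2) > 1$. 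So your argument is sound; it trades the paper's two-regime bookkeeping for a slightly more careful one-shot tail estimate.
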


\begin{proof}
To start, note that since $C$ and $\tilde M$ have the same deterministically zero rows (if any), we can simply ignore them throughout the proof. Now let $\mathcal{I}_*$ denote the set of non-zero rows of $\tilde M$, fix $\epsilon > 0$, and define the events
\[ F_{it} = \left\{ \left| (C X)_{it} - (\tilde M \breve X)_{it} \right|  \leq 2 \epsilon \right\}, \quad i \in \mathcal{I}_*, \, 1\leq t \leq \ell. \]
Next, note that
\begin{align*}
\mathbb{E}_n\left[ \left\| CX - \tilde M \breve X \right\|_\infty \right] &= \mathbb{E}_n\left[ \max_{i \in \mathcal{I}_*} \sum_{t=1}^\ell  \left| (CX)_{it} - (\tilde M \breve X)_{it} \right| \right] \\
&\leq \mathbb{E}_n\left[ \max_{i \in \mathcal{I}_*} \sum_{t=1}^\ell  \left| (CX)_{it} - (\tilde M \breve X)_{it} \right| 1\left( F_{it} \right)  \right] \\
&\hspace{5mm} + \mathbb{E}_n\left[ \max_{i \in \mathcal{I}_*} \sum_{t=1}^\ell  \left( |(CX)_{it}| + |(\tilde M \breve X)_{it}| \right) 1\left(  F_{it}^c \right)  \right] \\
&\leq \mathbb{E}_n\left[ \max_{i \in \mathcal{I}_*} \sum_{t=1}^\ell 2\epsilon 1\left(  F_{it} \right)  \right] + \mathbb{E}_n\left[ \max_{i \in \mathcal{I}_*} 2\sum_{t=1}^\ell 1(F_{it}^c)  \right] \\
&\leq 2\ell \epsilon + 2 \sum_{i \in \mathcal{I}_*} \mathbb{E}_n\left[  \sum_{t=1}^\ell 1\left( F_{it}^c \right)  \right],
\end{align*}
where in the last step we used the observation that $$\sum_{t=1}^\ell |(CX)_{it}| 1(F_{it}^c) \leq \sum_{t=1}^\ell \sum_{j=1}^n C_{ij} |X_{jt}| 1(F_{it}^c) \leq  \sum_{t=1}^\ell \| C_{i\bullet} \|_1 1(F_{it}^c) \leq \sum_{t=1}^\ell 1(F_{it}^c) ,$$
and similarly, $\sum_{t=1}^\ell |(\tilde M \breve X)_{it}| 1(F_{it}^c) \leq \sum_{t=1}^\ell 1(F_{it}^c)$. It follows that
\[ \mathbb{E}_n\left[ \left\| CX - \tilde M \breve X \right\|_\infty \right] \leq 2\epsilon \ell + 2 \sum_{i \in \mathcal{I}_*} \sum_{t=1}^\ell \mathbb{P}_n \left( F_{it}^c \right) .  \]
To bound $\mathbb{P}_n\left( F_{it}^c \right)$, start by noting that
\begin{align*}
 \left| (C X)_{it} - (\tilde M \breve X)_{it} \right|  &= \left|\sum_{j=1}^n C_{ij} X_{jt} - \sum_{j=1}^n \tilde M_{ij} \breve X_{jt} \right| =   \left| \frac{\sum_{j=1}^n X_{jt} B_{ij}1(j \to i)}{\sum_{r=1}^n B_{ir} 1(r \to i)} - \frac{\sum_{j=1}^n \breve X_{jt} \beta_{J_i,J_j}p_{ji}^{(n)}}{\sum_{r=1}^n \beta_{J_i,J_r} p_{ri}^{(n)}} \right| .
\end{align*}
Next, note that we can rewrite
\[ \sum_{j=1}^n X_{jt} B_{ij}1(j \to i) = \sum_{r=1}^K \sum_{j=1}^n X_{jt} B_{i j}1(j \to i) 1(J_j = r) =: \sum_{r=1}^K  S_{i,r}^{(t)} \]
and
\[ \sum_{j=1}^n B_{ij}1(j \to i) = \sum_{r=1}^K \sum_{j=1}^n B_{i j}1(j \to i) 1(J_j = r) =: \sum_{r=1}^K S_{i,r}. \]
Moreover, conditionally on $J_i$, we have that $S_{i,r}^{(t)} \stackrel{\mathcal{D}}{=} \sum_{j=1}^{N_{i,r}} X_j^{(t,r)} B_j^{(i,r)}$ and $S_{i,r} \stackrel{\mathcal{D}}{=}  \sum_{j=1}^{N_{i,r}} B_j^{(i,r)}$, with $N_{i,r}$ a binomial with parameters $(n\pi_r^{(n)}, \kappa(r,J_i)\theta_n/n)$, independent of the i.i.d.~sequences $\{B_j^{(i,r)}: j \geq 1\}$ and $\{ X_j^{(t,r)}: j \geq 1\}$, which satisfy  $B_j^{(i,r)} \stackrel{\mathcal{D}}{=} (B_{ij} |J_i,  J_j = r )$, $X_j^{(t,r)} \stackrel{\mathcal{D}}{=} (X_{jt} | J_j = r)$, and $\bar x_j = \breve X_{jt}$ for $J_j = r$. It follows from Lemma~\ref{L.Ratios} that
\begin{align*}
\mathbb{P}_n\left( F_{it}^c \right) &= P\left( \left| \frac{S_{i,1}^{(t)}+ \dots +  S_{i,K}^{(t)}}{S_{i,1} + \dots + S_{i,K} } - \frac{E[S_{i,1}^{(t)}] + \dots + E[ S_{i,K}^{(t)}]}{E[S_{i,1}] + \dots + E[S_{i,K}]} \right| > 2\epsilon \right) \\
&\leq 4 \text{exp}\left( - \frac{\epsilon^2 (\theta_n \mu_{J_i}^{(n)})^2 }{2 \theta_n \nu_{J_i}^{(n)}} + \frac{H \epsilon^3 (\theta_n \mu_{J_i}^{(n)})^3}{2 (\theta_n\nu_{J_i}^{(n)})^2}  \right) = 4 \text{exp}\left( - \frac{\epsilon^2 \theta_n ( \mu_{J_i}^{(n)})^2 }{ 2 \nu_{J_i}^{(n)}} \left(1 -  \frac{H \epsilon \mu_{J_i}^{(n)}}{\nu_{J_i}^{(n)}} \right)  \right)
\end{align*}
where 
\begin{align*}
 \mu_{J_i}^{(n)} &= \sum_{r=1}^K \pi_r^{(n)} \kappa(r,J_i)  \beta_{J_i,r} = \theta_n^{-1}  \sum_{r=1}^K E[N_{i,r}] E[B_1^{(i,r)}] \\
\nu_{J_i}^{(n)} &=  \sum_{r=1}^K \pi_r^{(n)} \kappa(r,J_i)  v_{J_i,r} = \theta_n^{-1} \sum_{r=1}^K E[N_{i,r}] E[(B_1^{(r)})^2] .
\end{align*}

Therefore, 
\begin{align*}
\mathbb{E}_n\left[ \left\| CX - \tilde M \breve X \right\|_\infty \right] &\leq 2\ell \epsilon + 8 \sum_{i \in \mathcal{I}_*} \sum_{t=1}^\ell \text{exp}\left( - \frac{\epsilon^2 \theta_n ( \mu_{J_i}^{(n)})^2 }{ 2 \nu_{J_i}^{(n)}} \left(1 -  \frac{H \epsilon \mu_{J_i}^{(n)}}{\nu_{J_i}^{(n)}} \right)  \right) \\
&= 2\ell \epsilon + 8 \ell \sum_{r \in \mathcal{I}} n\pi_r^{(n)} \text{exp}\left( - \frac{\epsilon^2 \theta_n ( \mu_{r}^{(n)})^2 }{ 2 \nu_{r}^{(n)}} \left(1 -  \frac{H \epsilon \mu_{r}^{(n)}}{\nu_{r}^{(n)}} \right)  \right) \\
&\leq  2\ell \epsilon + 8 \ell \sum_{r \in \mathcal{I}} n\pi_r^{(n)} \text{exp}\left( - \frac{\epsilon^2 \theta_n ( \mu_{r}^{(n)})^2 }{ 2\nu_{r}^{(n)}} \left(1 -   H \epsilon \Lambda_n \right)  \right)  \\
&\leq 2\ell \epsilon + 8 \ell  n \, \text{exp}\left( - \frac{\epsilon^2 \theta_n  }{ 2 \Delta_n} \left(1 -  H \epsilon \Lambda_n \right)  \right),
\end{align*}
where $\Delta_n = \max_{r \in \mathcal{I}} \nu_r^{(n)}/(\mu_r^{(n)})^2$ and $\Lambda_n = \max_{r \in \mathcal{I}} \mu_r^{(n)}/\nu_r^{(n)}$. 
Next, choose $\epsilon = \sqrt{4 \Delta_n \log n/\theta_n}$ to obtain 
\begin{align*}
    n \, \text{exp}\left( - \frac{\epsilon^2 \theta_n  }{2  \Delta_n} \left(1 -  H \epsilon \Lambda_n \right)  \right) &= n\, \text{exp}\left( -  2\log n  \left( 1  -  \frac{2  H \Lambda_n \sqrt{ \Delta_n \log n} }{ \theta_n^{1/2} } \right)  \right) \\ &=
    \text{exp}\left( -  \log n  \left( 1  -  \frac{4  H \Lambda_n \sqrt{ \Delta_n \log n} }{ \theta_n^{1/2} } \right) \right) ,
\end{align*}
which in turn yields
\[ \mathbb{E}_n \left[   \left\| C X - \tilde M \breve X  \right\|_\infty  \right]  \leq 4 \ell  \sqrt{ \Delta_n \log n/\theta_n} + 8 \ell  \, \text{exp}\left( -  \log n    \left(1 -  \frac{4 H \Lambda_n \sqrt{ \Delta_n \log n} }{ \theta_n^{1/2} } \right)  \right) . \] 
Since the second norm corresponds to the case when $X$ is deterministic, the same argument also yields
\[ \mathbb{E}_n \left[   \left\| (C  - \tilde M) \breve X  \right\|_\infty  \right] \leq 4\ell  \sqrt{ \Delta_n \log n/\theta_n} + 8 \ell \, \text{exp}\left( -  \log n    \left(1 -  \frac{4 H \Lambda_n \sqrt{ \Delta_n \log n} }{ \theta_n^{1/2} } \right)  \right) . \]
To complete the proof, note that if $(2H \Lambda_n/\gamma)^2 \Delta_n \log n \leq \theta_n \leq (\log n)^3$, then 
\begin{align*}
    &\sqrt{ \Delta_n \log n/\theta_n}  +  \text{exp}\left( -  \log n    \left(1 -  \frac{4 H \Lambda_n \sqrt{ \Delta_n \log n} }{ \theta_n^{1/2} } \right)  \right) \\ 
    &\hspace{5mm} \leq \Delta_n^{1/2}\sqrt{\frac{\log n}{\theta_n}} + \exp\left( -\left( 1 -2\gamma \right)\log n  \right) =
    \Delta_n^{1/2}\sqrt{\frac{\log n}{\theta_n}}+n^{-1+2\gamma} = O\left( \sqrt{\frac{\log n}{\theta_n}} \right)
\end{align*}
as $n \to \infty$, while for $(\log n)^3 < \theta_n \leq n/ \kappa_*$, with $\kappa_* = \max_{1 \leq r,s\leq K} \kappa(r,s)$, 
\begin{align*}
    &\sqrt{ \Delta_n \log n/\theta_n}  +  \text{exp}\left( -  \log n    \left(1 -  \frac{4 H \Lambda_n \sqrt{ \Delta_n \log n} }{ \theta_n^{1/2} } \right)  \right) \\ 
    &\hspace{5mm}\leq \Delta_n^{1/2}\sqrt{\frac{\log n}{\theta_n}}+ n^{-1} e^{4H \Lambda_n \sqrt{\Delta_n}} = O\left( \sqrt{\frac{\log n}{\theta_n}} \right)
\end{align*}
as $n \to \infty$. The statement of the theorem now follows. 
%
%
\end{proof}

We can now give the proof to the first part of Theorem~\ref{MFA_generalthm}. 

\begin{proof}[Proof of Theorem~\ref{MFA_generalthm}]
Suppose $\theta_n \geq (6H \Lambda_n)^2 \Delta_n \log n$. We will prove that
\[\sup_{k \geq 0} \mathbb{E}_n\left[ \left\| R^{(k)} - \mathcal{R}^{(k)} \right\|_\infty \right] = O\left( \sqrt{\frac{\log n}{\theta_n} } + \mathcal{E}_n \right) \]
as $n \to \infty$, where $\mathcal{E}_n = \max_{1\leq r,s \leq K} \left| \frac{\pi_s^{(n)} \pi_r - \pi_s \pi_r^{(n)}}{\pi_r^{(n)} \pi_s} \right|$. To start, use the triangle inequality to obtain 
\begin{align*}
\sup_{k \geq 0} \mathbb{E}_n\left[ \left\| R^{(k)} - \mathcal{R}^{(k)} \right\|_\infty \right] &\leq \sup_{k \geq 0}  \mathbb{E}_n\left[ \left\| R^{(k)} - \tilde{R}^{(k)} \right\|_\infty \right] +  \mathbb{E}_n\left[ \sup_{k \geq 0} \left\| \tilde{R}^{(k)} - \mathcal{R}^{(k)} \right\|_\infty \right].
\end{align*}
Theorem~\ref{T.interm_meanfield} yields
$$\sup_{k \geq 0} \left\| \tilde{R}^{(k)} - \mathcal{R}^{(k)} \right\|_\infty \leq \frac{c\ell}{d^2}  \, \mathcal{E}_n.$$
For the remaining term, use Theorem~\ref{T.interm_pnorm} to obtain that
\begin{align*}
 \mathbb{E}_n\left[ \left\| R^{(k)} - \tilde{R}^{(k)} \right\|_\infty \right]  &\leq \max_{r\geq 1}  h( \tilde{M}^r \breve{W}  )  \sum_{t=0}^{k-1} (1-d)^{t-1} t + \mathbb{E}_n \left[  \left\| C W^{(0)} - \tilde M \breve{W} \right\|_\infty \right] \sum_{t=0}^{k-1} (1-d)^t  \\
&\hspace{5mm} + \max_{r\geq 1}  h( \tilde{M}^r \breve{R} ) \, k (1-d)^{k-1}   + \mathbb{E}_n \left[  \left\| C R^{(0)} - \tilde M \breve{R}  \right\|_\infty \right] (1-d)^k 
\end{align*}
where $h(X) = \mathbb{E}_n\left[ \left\| (C - \tilde M) X \right\|_\infty \right]$. Now use Theorem~\ref{T.InfinityNorms} to obtain that, since $\left\| \tilde{M}^r \breve{W} \right\|_\infty \leq \ell$ and $\left\| \tilde{M}^r \breve{R} \right\|_\infty \leq \ell$ for all $r \geq 0$, then
\begin{align*}
&\max\left\{ \max_{r \geq 1} h( \tilde{M}^r \breve{W} ), \, \max_{r \geq 1} h( \tilde{M}^r \breve{R} ), \,  \mathbb{E}_n \left[  \left\| C W^{(0)} - \tilde M \breve{W} \right\|_\infty \right] , \,  \mathbb{E}_n \left[  \left\| C R^{(0)} - \tilde M \breve{R} \right\|_\infty \right]  \right\} \leq \ell \Gamma' \sqrt{\frac{\log n}{\theta_n} }.
\end{align*}
Therefore,
\begin{align*}
 \mathbb{E}_n\left[ \left\| R^{(k)} - \tilde{R}^{(k)} \right\|_\infty \right]  &\leq \ell \Gamma'  \sqrt{\frac{\log n}{\theta_n} }  \left(\sum_{t=0}^{k-1} (1-d)^{t-1} t +  \sum_{t=0}^{k-1} (1-d)^t + k(1-d)^{k-1} + (1-d)^k \right),
 \end{align*}
and since the supremum over $k$ is finite, there exists some other constant $\Gamma < \infty$ such that
$$\sup_{k \geq 0} \left(  \mathbb{E}_n\left[ \left\| R^{(k)} - \tilde{R}^{(k)} \right\|_\infty \right] + \left\| \tilde{R}^{(k)} - \mathcal{R}^{(k)} \right\|_\infty \right)   \leq \Gamma \left(  \sqrt{\frac{\log n}{\theta_n} } + \mathcal{E}_n \right).$$
This completes the proof for the first part of the theorem.
\end{proof}

We now move to the density regime where $\log n/\theta_n$ may not converge to zero. In that case, we can still prove concentration of the opinion matrix $R^{(k)}$ to the mean-field limit $\mathcal{R}^{(k)} $ as $n\to\infty$, but in a weaker mode of convergence.

\subsection{The subcritical to sparse regimes} \label{SS.SemiSparse}

In this section we complete the proof of Theorem~\ref{MFA_generalthm} by analyzing the expected norms \eqref{eq:OneNorms}. When $\limsup_{n \to \infty} (\log n)/\theta_n > 0$, the number of neighbors is not enough to provide a full concentration of the matrix $C$ around its asymptotic mean $\tilde M$, since up to an $o(n)$ number of vertices will not be sufficiently close to their limits. Instead of using the supremum norm as in the denser case, we use an averaged $l_1$-norm, which is what we typically use in sparse graphs where the local limiting structure of the dSBM is what enables the analysis. The key observation in this section is that for any matrix $X \in \mathbb{R}^{n\times\ell}$,  $(C^s X)_{ij}$ is a function of the inbound neighborhood of vertex $i \in V_n$ of length $s$, which is approximately equal to $(\tilde{M}^s X)_{ij}$. In order for us to make this connection, we need to describe the local behavior of neighborhoods in the marked graph $G_n =(V_n, E_n; \mathscr{A}_n)$, where $\mathscr{A}_n = \{ \mathbf{Y}_i: i \in V_n\}$ are the extended vertex attributes.

\subsubsection{Coupling with a multi-type Galton-Watson tree} \label{CouplingPBT}

Pick a vertex $I_n$, chosen uniformly at random from the vertex set $V_n$. The idea behind local weak convergence is to couple the exploration of the inbound neighborhood of $I_n$ with the construction of a rooted graph, whose root $\emptyset$ corresponds to $I_n$. We point out that in our setting, the local weak limit of the dSBM is a non locally-finite graph, so the coupling we need is one between the exploration of the inbound neighborhood of $I_n$ with a branching tree whose degree distribution grows with $n$. We start by introducing  some terminology for trees.

We enumerate nodes in a rooted tree using the Ulam-Harris notation, which contains the ancestry path of any node all the way to the root. For that, let any node in generation $k \geq 1$ of the tree have a label of the form $\bi=(i_1, \dots, i_k)\in\mathbb{N}_+^k$,  and define the concatenation operation $(\bi, j)=(i_1, \dots, i_k, j)$ for a node in generation $k+1$. The root is always denoted by $\emptyset$, but is omitted from the label of any other node; to simplify the notation, we omit the parentheses for nodes in the first generation. We denote by $\mathcal{U}= \{ \emptyset \} \cup \bigcup_{k=1}^{\infty}\mathbb{N}_+^k$ the set of all possible node labels. The generation of a node $\bi\in\mathcal{U}$ is denoted by $|\bi|$, for $\bi \neq \emptyset$. 

The local weak limit of a sparse dSBM with $K$ communities is a marked $K$-type Galton-Watson tree. Recall that our network is not a sparse graph, so instead we are using an \textit{intermediate} coupling (see \cite{olvera2022strong} for a detailed description of the construction). In order to obtain a description of the coupled tree, we start by reminding the reader that vertices in $G_n$ have extended attributes:
\begin{equation} \label{eq:FullMark}
\mathbf{Y}_i = (J_i, \mathbf{Q}_i, \mathbf{B}_i) \in \{ 1, \dots, K\} \times [-1,1]^\ell \times [0,H]^n,
\end{equation}
and $\mathscr{F}_n = \sigma\left( \mathbf{Y}_i: i \in V_n \right)$. In the $K$-type Galton-Watson tree, each node $\bi$ will have an {\em attribute}:
\[ \boldsymbol{\hat A}_\bi = \left(  \hat J_\bi, \boldsymbol{\hat Q}_\bi \right) \in \{1, \dots, K\} \times [-1,1]^\ell, \]
and a {\em full mark} of the form:
$$\boldsymbol{\hat X}_\bi = \left( \hat J_\bi, \boldsymbol{\hat Q}_\bi, \{ \hat N_\bi^{(r)} \}_{r=1}^K, \{ \hat B_{(\bi, s)} \}_{s=1}^{\hat N_\bi} \right) \in \{1, \dots, K\} \times [-1,1]^\ell \times \mathbb{N}^K \times [0, H]^\infty,$$
where $\hat J_\bi$ is the type of node $\bi$, $\hat N_\bi^{(r)}$ is the number of type $r$ offspring that node $\bi$ has, and $\hat N_\bi = \hat N_\bi^{(1)} + \dots + \hat N_\bi^{(K)}$.  

We will construct the tree conditionally on the community labels $\mathscr{J}_n = \{J_i: i \in V_n\}$. The unmarked tree will be denoted $\mathcal{T} = \bigcup_{k=0}^\infty \mathcal{A}_k \subset \mathcal{U}$, where $\mathcal{A}_k$ is the set of nodes in its $k$th generation, and each node $\bi \in \mathcal{T}$ will have a type $\hat J_\bi$. To start, set $\mathcal{A}_0 = \{ \emptyset \}$ and choose the type of the root $\emptyset$ according to:
\[ \mathbb{P}_n\left(\hat{J}_{\emptyset}=r\right)=\dfrac{1}{n}\sum\limits_{i=1}^n1(J_i=r)=\pi_r^{(n)}. \]
From here on, any node $\bi \in \mathcal{A}_k$, will have an offspring vector $(\hat N_\bi^{(1)}, \dots, \hat N_\bi^{(K)})$, conditionally independent of everything else given its type, and having distribution:
\[ \mathbb{P}_n\left(\hat N_\bi^{(1)} = n_1, \dots, \hat N_\bi^{(K)}= n_K\,\big\vert\,\hat{J}_\bi=r \right) = \prod\limits_{s=1}^K e^{-q_{sr}^{(n)} }\dfrac{(q_{sr}^{(n)} )^{n_s}}{n_s!}, \]
where $q_{sr}^{(n)} = \kappa(s,r) \pi_s^{(n)} \theta_n$. In other words, given $\hat J_\bi=r$, the numbers of offspring of each type $s$ of a node of type $r$ are independent Poisson random variables with means $q_{sr}^{(n)}$, $1\leq s \leq K$. To prevent the labels from conveying any information, we usually permute the $\hat N_\bi$ offspring uniformly at random, and then set 
\[\mathcal{A}_{k+1} = \left\{ (\bi, j): \bi \in \mathcal{A}_k, 1 \leq j \leq \hat N_\bi \right\}. \]
Note that the conditional distribution of $\hat J_{(\bi,j)}$ for $|\bi| = k$, $k \geq 0$, depends on $\hat J_\bi$ and is given by
\[\mathbb{P}_n\left( \left. \hat J_{(\bi,j)} = s \right| \hat J_\bi = r \right) = \mathbb{E}_n\left[ \left. \frac{\hat N_\bi^{(s)} 1(\hat N_\bi > 0)}{\hat N_\bi} \right| \hat J_\bi =r \right] = \frac{q_{sr}^{(n)}}{\sum_{t=1}^K q_{tr}^{(n)}} = \frac{\kappa(s,r) \pi_s^{(n)}}{\sum_{t=1}^K \kappa(t,r) \pi_t^{(n)}}. \]
Moreover, conditionally on $\hat J_\bi$, the $\{ \hat J_{(\bi,j)}: j \geq 1\}$ are independent of each other. 


Now that the unmarked tree $\mathcal{T} = \bigcup_{k=0}^\infty \mathcal{A}_k$ has been constructed, we assign to each node $\bi \in \mathcal{T}$ a mark $\left(\boldsymbol{\hat Q}_\bi, \{ \hat B_{(\bi, s)} \}_{s=1}^{\hat N_\bi} \right)$. The internal belief vector $\boldsymbol{\hat Q}_\bi$ depends only on $\hat J_\bi$ and is distributed according to $F_{\hat J_\bi}$. Finally, the unnormalized weights satisfy for $x \in \mathbb{R}$,
\begin{equation} \label{eq:TreeWeights}
 \mathbb{P}_n\left( \left. \hat B_{(\bi,j)} \leq x \right|  \hat J_\bi, \hat J_{(\bi,j)} \right) = G_{\hat J_\bi, \hat J_{(\bi,j)} } ( x), 
 \end{equation}
independently of everything else. 

\begin{defin} \label{D.Isomorphism}
    Two simple vertex-marked directed graphs, $G=(V, E; \mathscr{A})$ and $G'=(V', E'; \mathscr{A}')$, whose vertices have marks $\mathbf{a}_i$, $i \in V$, and $\mathbf{a}_i'$, $i \in V'$, respectively, are said to be \textit{isomorphic} if there exists a bijection $\vartheta: V\rightarrow V'$ such that $(i, j)\in E$ if, and only if, $(\vartheta(i), \vartheta(j))\in E'$, for nodes $i, j\in V$, and for all $i \in V$ the vertex mark $\mathbf{a}_i = \mathbf{a}_{\vartheta(i)}'$. We write $G\simeq G'$ to express that the graphs $G$ and $G'$ are isomorphic.
\end{defin}

For each $i \in V_n$, the intermediate coupling will construct a marked tree $\mathcal{T}_{\emptyset(i)}(\hat J) \stackrel{\mathcal{D}}{=} (\mathcal{T}(\hat J) | \hat J_\emptyset = J_i)$ using as marks in Definition~\ref{D.Isomorphism} the types, which correspond to the community labels in the dSBM (we do not need the internal opinions nor the unnormalized weights for this purpose, since their distribution is determined by the types). Note that the subscript $\emptyset(i)$ emphasizes that the root of the tree corresponds to vertex $i$ in the graph. For any fixed $s \geq 0$, let $\mathcal{G}_i^{(s)}(J)$ denote the induced subgraph of $G_n$ determined by the vertices in $V_n$ that have directed paths of length at most $s$ connecting them to vertex $i$, including their community labels, and let $\mathcal{T}_{\emptyset(i)}^{(s)}(\hat J)$ denote the restriction of $\mathcal{T}_{\emptyset(i)}(\hat J)$ to its first $s$ generations. Then, using the same type of intermediate coupling as in \cite{lee2020pagerank}, one obtains the following theorem.

\begin{theorem} \label{T.Coupling}
For the dSBM under Assumption~\ref{regularity_condition_communities}, with $\theta_n \leq (\log n)^b$ for some $b > 0$,  and any fixed $k \geq 0$, there exists a sequence of marked $K$-type Galton-Watson trees $\{ \mathcal{T}_{\emptyset(i)}(\hat J): i \in V_n\}$ such that $\mathcal{T}_{\emptyset(i)}(\hat J) \stackrel{\mathcal{D}}{=} (\mathcal{T}(\hat J) | \hat J_\emptyset = J_i)$ and
\[ \frac{1}{n} \sum_{i=1}^n \mathbb{P}_n\left( \mathcal{G}_i^{(k)}(J) \not\simeq \mathcal{T}_{\emptyset(i)}^{(k)}(\hat J) \right) \xrightarrow{P} 0, \qquad n \to \infty. \]
\end{theorem}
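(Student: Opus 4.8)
The plan is to adapt the intermediate coupling of \cite{olvera2022strong, lee2020pagerank} to our marked dSBM, and to show that the only two obstructions to isomorphism — cycles created during the exploration, and a mismatch between the (without-replacement) binomial neighbor counts and their Poisson limits — have vanishing aggregate probability once $\theta_n \leq (\log n)^b$. First I would set up a simultaneous breadth-first exploration. Conditionally on $\mathscr{J}_n$, I explore the inbound $k$-neighborhood of a fixed vertex $i$ generation by generation: having revealed the explored set up to generation $t$, for each vertex $u$ in generation $t$ and each type $s$ I reveal the as-yet-unexplored type-$s$ vertices pointing an edge into $u$. Since each such edge is present independently with probability $\kappa(s,J_u)\theta_n/n$, the number of newly revealed type-$s$ parents of $u$ is binomial with parameters $(m_s, \kappa(s,J_u)\theta_n/n)$, where $m_s$ is the number of currently unexplored type-$s$ vertices. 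In parallel I grow the tree $\mathcal{T}_{\emptyset(i)}$, with root of type $J_i$, assigning to the node corresponding to $u$ a $\text{Poisson}(q_{s,J_u}^{(n)})$ number of type-$s$ offspring, $q_{s,r}^{(n)} = \kappa(s,r)\pi_s^{(n)}\theta_n$, and couple the binomial and Poisson counts by a maximal coupling for each pair $(u,s)$. Because parents are revealed by type and offspring are generated by type, the types — and hence the marks, whose laws are determined by the types — match automatically whenever the counts do, so $\mathcal{G}_i^{(k)}(J)$ and $\mathcal{T}_{\emptyset(i)}^{(k)}(\hat J)$ are isomorphic in the sense of Definition~\ref{D.Isomorphism} unless the coupling breaks.

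Next I would bound the two failure modes. The coupling of the type-$s$ counts fails with probability at most the total-variation distance between $\text{Binomial}(m_s,\kappa(s,J_u)\theta_n/n)$ and $\text{Poisson}(q_{s,J_u}^{(n)})$, plus the correction for sampling without replacement. The former is $O(\theta_n^2/n)$ per vertex by the standard $\sum_s m_s p_s^2$ bound; the latter, together with the probability that a revealed parent already lies in the explored set (a collision, which is exactly what creates a cycle and destroys the tree structure), is controlled by the size $Z$ of the explored set times the edge density, i.e.\ $O(Z\theta_n/n)$ per exploration step. A first-moment estimate gives $\mathbb{E}_n[Z]=\mathbb{E}_n[|\mathcal{A}_0|+\dots+|\mathcal{A}_k|]=O(\theta_n^k)$, since the expected number of parents of any vertex is $\sum_s q_{s,r}^{(n)}=\theta_n\sum_s\kappa(s,r)\pi_s^{(n)}=O(\theta_n)$, with implied constant depending only on $K$, the kernel, and $\max_r\sum_s\pi_s^{(n)}\kappa(s,r)$. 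Summing the per-step bound over the $O(\theta_n^k)$ explored vertices yields
\[ \mathbb{P}_n\left( \mathcal{G}_i^{(k)}(J) \not\simeq \mathcal{T}_{\emptyset(i)}^{(k)}(\hat J) \right) \leq C(\mathscr{J}_n)\,\frac{\theta_n^{2k+1}}{n}, \]
uniformly over $i\in V_n$, where $C(\mathscr{J}_n)$ depends only on the empirical proportions and the kernel.

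Finally I would conclude. Averaging the uniform bound over $i$ leaves the right-hand side unchanged, and since $\theta_n \leq (\log n)^b$ we have $\theta_n^{2k+1}/n = O((\log n)^{b(2k+1)}/n) \to 0$ for each fixed $k$. Under Assumption~\ref{regularity_condition_communities} the proportions $\pi_r^{(n)}$ converge in probability to $\pi_r>0$, so $C(\mathscr{J}_n)$ is bounded in probability, and therefore the average converges to $0$ in probability, as claimed. The hard part, and the reason the classical sparse-graph argument does not transfer verbatim, is the unbounded growth of the neighborhoods: the explored set has size $\Theta(\theta_n^k)$ rather than $O(1)$, so both the collision count and the without-replacement drift grow with $n$, and the proof works precisely because the restriction $\theta_n \leq (\log n)^b$ keeps $\theta_n^{2k+1}=o(n)$ for fixed $k$. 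Controlling these growing neighborhoods and the resulting error terms uniformly in $i$ — rather than merely for a single uniformly chosen root, as in the standard local weak convergence setting — is the delicate point that the intermediate coupling of \cite{olvera2022strong} is designed to handle.
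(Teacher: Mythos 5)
The paper never proves Theorem~\ref{T.Coupling} itself: it states the result as a consequence of the intermediate coupling of \cite{olvera2022strong} and \cite{lee2020pagerank}, so your proposal---a breadth-first exploration of the inbound neighborhood, coupled generation by generation with the $K$-type Poisson tree conditionally on $\mathscr{J}_n$, with failure controlled by binomial-versus-Poisson total variation, without-replacement drift, and collisions---is precisely the argument the paper outsources to those references. Your identification of the failure modes, the per-vertex error rates $O(\theta_n^2/n)$ and $O(Z\theta_n/n)$, the resulting bound $C(\mathscr{J}_n)\theta_n^{2k+1}/n$, and the conclusion from $\theta_n\leq(\log n)^b$ under Assumption~\ref{regularity_condition_communities} all match that construction.

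One step in your write-up needs repair. The total failure probability you accumulate is a sum of per-step bounds of order $Z\theta_n/n$ over a \emph{random} number $Z$ of explored vertices, i.e.\ a quantity of order $Z^2\theta_n/n$; to turn this into the claimed bound on $\mathbb{P}_n\bigl( \mathcal{G}_i^{(k)}(J) \not\simeq \mathcal{T}_{\emptyset(i)}^{(k)}(\hat J) \bigr)$ you need $\mathbb{E}_n[Z^2]=O(\theta_n^{2k})$, not merely the first-moment estimate $\mathbb{E}_n[Z]=O(\theta_n^k)$ that you invoke---``summing the per-step bound over the $O(\theta_n^k)$ explored vertices'' silently replaces $\mathbb{E}_n[Z^2]$ by $\mathbb{E}_n[Z]^2$, which is not legitimate because the per-step errors are correlated with the size of the explored set. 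The fix is standard: dominate the exploration by the $K$-type branching process with Poisson offspring means $q^{(n)}_{sr}=O(\theta_n)$, for which the usual Galton--Watson variance recursion (offspring mean and variance both $O(\theta_n)$) gives $\mathbb{E}_n[Z^2]=O(\theta_n^{2k})$; alternatively, truncate on the event $\{Z\leq \theta_n^{k}\log n\}$, which holds with probability $1-o(1)$ uniformly in $i$. With that second-moment input, your bound holds as stated and the remainder of your argument---uniformity in $i$ through $J_i$, boundedness in probability of $C(\mathscr{J}_n)$ under Assumption~\ref{regularity_condition_communities}, and $\theta_n^{2k+1}/n\to 0$ for fixed $k$---goes through.
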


Note that when the coupling holds for a vertex $i$ up to generation $s$, i.e.,  $\mathcal{G}_i^{(s)}(J) \simeq \mathcal{T}_{\emptyset(i)}^{(s)}(\hat J)$, we can copy the internal belief vectors and the unnormalized weights from the graph, so $\boldsymbol{\hat Q}_\bi = \mathbf{Q}_{\vartheta(\bi)}$ and $\hat B_{(\bi,j)} = B_{\vartheta(\bi), \vartheta((\bi,j))}$, where $\vartheta$ is the bijection defining the isomorphism. When the coupling fails, we simply sample them independently according to $F_{\hat J_\bi}$ and \eqref{eq:TreeWeights}, respectively. Once we have the unnormalized weights, we define
$$\hat C_{(\bi,j)} = \frac{\hat B_{(\bi,j)}}{\sum_{s=1}^{\hat N_\bi} \hat B_{(\bi,s)}} \cdot 1\left( \sum_{s=1}^{\hat N_\bi} \hat B_{(\bi,s)} > 0\right).$$

On the tree, the weights $\{\hat{C}_\bi: \bi\in \mathcal{T}\}$ are used to construct the weights
\[ \Pi_{(\bi, j)}= \Pi_\bi\hat{C}_{(\bi,j)}, \qquad \Pi_\emptyset=1. \]

\subsubsection{Proof of Theorem~\ref{MFA_generalthm} for semi-sparse graphs}

Our goal is to bound the norm
\[
\max_{i \in V_n} \mathbb{E}_n\left[ \left\| (C^s X)_{i\bullet} - (\tilde{M}^s \breve{X})_{i\bullet} \right\|_1 \right] = \max_{i \in V_n} \sum_{j=1}^t \mathbb{E}_n\left[ \left| (C^s X)_{ij} - (\tilde M^s \breve X)_{ij} \right| \right]
\]
for $X \in [-1,1]^{n \times \ell}$ a random matrix, conditionally independent of $C$ given $\mathscr{F}_n$ and having mean $\breve X = \mathbb{E}_n[X]$. 

To simplify the notation, note that if we set $\mathbf{X} = X_{\bullet j}$ and $\mathbf{\bar x} = \breve X_{\bullet j} =  \mathbb{E}_n[ \mathbf{X}]$, then
\[\mathbb{E}_n\left[ \left| (C^s X)_{ij} - (\tilde M^s \breve X)_{ij} \right| \right] = \mathbb{E}_n\left[ \left| (C^s \mathbf{X})_{i} - (\tilde M^s \mathbf{\bar x})_{i} \right| \right], \]
so it suffices to analyze expectations of this form. We start by defining for each $i \in V_n$ and any $s \geq 1$, the event that the coupling with the $K$-type Galton-Watson tree holds:
\begin{align*}
D_{s,i} = \left\{ \mathcal{G}_i^{(s)}(J) \simeq \mathcal{T}_{\emptyset(i)}^{(s)}(\hat J) \right\}.
\end{align*}

\begin{lemma} \label{L.TreeWeights}
Let $\mathbf{X} = (X_1, \dots, X_n)^\top \in [-1,1]^n$ be a random vector such that $X_i$ has distribution $H_{\mathbf{A}_i}$, independently of everything else; let $\mathbf{\bar x} =  (\bar x_1, \dots, \bar x_n)^\top = \mathbb{E}_n[\mathbf{X}]$ denote its mean.  Furthermore, suppose $\bar x_i = x_r$ for all $i$ such that $J_i =r$, $1\leq r \leq K$. Then, for any $i \in V_n$ and $s \geq 1$,
\[ \mathbb{E}_n\left[  \left| (C^s \mathbf{X})_i - (\tilde M^s \mathbf{\bar x})_i \right| \right] \leq \mathbb{E}_n\left[ \left. \left| \sum_{|\bi| = s} \Pi_\bi \hat X_\bi  - (\breve M \mathbf{x})_{J_i} \right| \right| \hat J_\emptyset = J_i \right] + 2 \mathbb{P}_n\left(  D_{s,i}^c  \right), \]
where $\breve M$ is defined in \eqref{eq:BreveM}, $\mathbf{x} = (x_1, \dots, x_K)^\top$, the $\Pi_\bj$ are defined on the marked tree $\mathcal{T}(\hat J)$, and $\hat X_\bi$ is distributed according to $H_{\boldsymbol{\hat A}_\bi}$, independently of everything else. 
\end{lemma}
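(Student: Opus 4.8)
The plan is to transfer the graph-side quantity $(C^s\mathbf{X})_i$ onto the coupled multi-type Galton–Watson tree, where it becomes exactly $\sum_{|\bi|=s}\Pi_\bi \hat X_\bi$, and then to control the discrepancy by isolating the event $D_{s,i}$ on which the coupling of Theorem~\ref{T.Coupling} succeeds. Throughout I will use the identity $(\tilde M^s \mathbf{\bar x})_i = (\breve M^s \mathbf{x})_{J_i}$ established inside the proof of Theorem~\ref{T.interm_meanfield}, so that the deterministic centering on both sides agrees. The argument rests on two ingredients: an exact pathwise identity on $D_{s,i}$, and the observation that both $(C^s\mathbf{X})_i$ and the tree sum are bounded by $1$ in absolute value, so their contribution off $D_{s,i}$ costs at most $2\,\mathbb{P}_n(D_{s,i}^c)$.

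For the pathwise identity, I would first expand
$$(C^s\mathbf{X})_i = \sum_{j_1,\dots,j_s} C_{i j_1} C_{j_1 j_2}\cdots C_{j_{s-1} j_s}\, X_{j_s},$$
which is a sum over inbound directed paths $i \leftarrow j_1 \leftarrow \cdots \leftarrow j_s$ of length $s$ rooted at $i$, since $C_{uv}\neq 0$ only when $v\to u$. On $D_{s,i}=\{\mathcal{G}_i^{(s)}(J)\simeq \mathcal{T}_{\emptyset(i)}^{(s)}(\hat J)\}$ the depth-$s$ inbound neighborhood of $i$ is tree-like and isomorphic, via the bijection $\vartheta$, to the first $s$ generations of $\mathcal{T}_{\emptyset(i)}(\hat J)$, so these paths are in bijection with the generation-$s$ nodes $\bi$. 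The crucial point is that for every internal vertex $\vartheta(\bi)$ with $|\bi|\le s-1$, the isomorphism of \emph{induced subgraphs} guarantees that all of its inbound neighbors lie in $\mathcal{G}_i^{(s)}(J)$ and correspond exactly to the children $\{(\bi,m)\}_{m=1}^{\hat N_\bi}$; hence the normalizing sum $\sum_{l\to \vartheta(\bi)} B_{\vartheta(\bi),l}$ equals $\sum_{m=1}^{\hat N_\bi}\hat B_{(\bi,m)}$, so that $C_{\vartheta(\bi),\vartheta((\bi,m))}=\hat C_{(\bi,m)}$. Multiplying along each path gives $C_{i j_1}\cdots C_{j_{s-1}j_s}=\Pi_\bi$, and since the marks are copied on the coupling event, $X_{j_s}=\hat X_\bi$. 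Therefore $(C^s\mathbf{X})_i = \sum_{|\bi|=s}\Pi_\bi\hat X_\bi$ on $D_{s,i}$.

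For the boundedness, recall that $\|C\|_\infty = 1$ forces $\|(C^s)_{i\bullet}\|_1 \le 1$, so $|(C^s\mathbf{X})_i|\le \|\mathbf{X}\|_\infty \le 1$; likewise $\sum_{|\bi|=s}\Pi_\bi \le \Pi_\emptyset = 1$ yields $|\sum_{|\bi|=s}\Pi_\bi\hat X_\bi|\le 1$, and $\|\breve M\|_\infty = 1$ with $|x_r|\le 1$ gives $|(\breve M^s\mathbf{x})_{J_i}|\le 1$. I would then split
$$\mathbb{E}_n\!\left[\,|(C^s\mathbf{X})_i - (\breve M^s\mathbf{x})_{J_i}|\,\right] = \mathbb{E}_n\!\left[\,|(C^s\mathbf{X})_i - (\breve M^s\mathbf{x})_{J_i}|\,1(D_{s,i})\,\right] + \mathbb{E}_n\!\left[\,|(C^s\mathbf{X})_i - (\breve M^s\mathbf{x})_{J_i}|\,1(D_{s,i}^c)\,\right],$$
bound the last expectation by $2\,\mathbb{P}_n(D_{s,i}^c)$, and on $D_{s,i}$ replace $(C^s\mathbf{X})_i$ by $\sum_{|\bi|=s}\Pi_\bi\hat X_\bi$. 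Dropping the indicator $1(D_{s,i})\le 1$ and using that $\mathcal{T}_{\emptyset(i)}(\hat J)$ has law $(\mathcal{T}(\hat J)\mid \hat J_\emptyset = J_i)$ turns the first term into the conditional expectation appearing in the statement, giving the claimed bound.

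The main obstacle I anticipate is making the weight-matching fully rigorous. It is not enough that the unmarked neighborhood is tree-like: one must verify that the \emph{normalization} defining each $C_{\vartheta(\bi),\vartheta((\bi,m))}$ is computed from precisely the same weights as $\hat C_{(\bi,m)}$, which is exactly why the coupling must expose the entire inbound neighborhood of each internal node (not merely a single path) and why it is stated through an isomorphism of induced subgraphs. A secondary subtlety is that the marks $\{\hat X_\bi\}$ must be jointly coupled with $\{X_{\vartheta(\bi)}\}$ so the pathwise identity holds surely on $D_{s,i}$, not just in distribution; this is arranged by copying the marks on the coupling event as in the construction following Theorem~\ref{T.Coupling}, using that $X_i$ depends only on $\mathbf{A}_i$ together with independent randomness.
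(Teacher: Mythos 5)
Your proposal is correct and follows essentially the same route as the paper: the pathwise identity $(C^s\mathbf{X})_i=\sum_{|\bi|=s}\Pi_\bi\hat X_\bi$ on the coupling event $D_{s,i}$, the identity $(\tilde M^s\mathbf{\bar x})_i=(\breve M^s\mathbf{x})_{J_i}$ imported from the proof of Theorem~\ref{T.interm_meanfield}, the split of the expectation over $D_{s,i}$ and $D_{s,i}^c$ with the off-event term bounded by $2\,\mathbb{P}_n(D_{s,i}^c)$ via the uniform bounds $|(C^s\mathbf{X})_i|\le 1$ and $|(\tilde M^s\mathbf{\bar x})_i|\le 1$. Your more explicit justification of the weight-matching (that the induced-subgraph isomorphism forces the normalizing sums, hence the $C$-entries, to agree with the $\hat C$-entries) is a detail the paper leaves implicit, and your use of $(\breve M^s\mathbf{x})_{J_i}$ rather than the paper's $(\breve M\mathbf{x})_{J_i}$ is the correct reading of what is evidently a typographical slip in the statement.
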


\begin{proof}
We start by considering one vertex $i \in V_n$ and construct its coupled tree $\mathcal{T}_{\emptyset(i)}(\hat J)  \stackrel{\mathcal{D}}{=} ( \mathcal{T}(\hat J) | \hat J_\emptyset = J_i)$. Note that on the event $D_{s,i}$ there is a unique path connecting vertex $i$ to each of its neighbors at distance $s$, and these paths are the same as those in the tree, so
\begin{equation}\label{CsX_ingeneral}
   (C^s\mathbf{X})_i = \sum_{j_1=1}^n \cdots \sum_{j_s = 1}^n C_{ij_1}C_{j_1j_2}\cdots C_{j_{s-1}j_s} X_{j_s} = \sum_{|\bi| = s} \Pi_\bi \hat X_\bi,
\end{equation}
with $\hat X_\bi = X_{\vartheta(\bi)}$, with $\vartheta$ the bijection determining the event $D_{s,i}$. Since the distribution of $X_{\vartheta(\bi)}$ depends only on $\mathbf{A}_{\vartheta(\bi)}$ and $\mathbf{A}_{\vartheta(\bi)} = \boldsymbol{\hat A}_\bi$, then $(\hat X_\bi | \boldsymbol{\hat A}_\bi)$ is distributed according to $H_{\boldsymbol{\hat A}_\bi}$. 

The induction proof given within the proof of Theorem~\ref{T.interm_meanfield} shows that
\[(\tilde M^s \mathbf{\bar x})_i = (\breve M \mathbf{x})_{J_i}. \]
Consequently,
\begin{align*}
\mathbb{E}_n\left[  \left| (C^s \mathbf{X})_i - (\tilde M^s \mathbf{\bar x})_i \right| \right] &=  \mathbb{E}_n\left[  \left| (C^s \mathbf{X})_i - (\tilde M^s \mathbf{\bar x})_i  \right| \right] \\
&\leq  \mathbb{E}_n\left[  1(D_{s,i}) \left| (C^s \mathbf{X})_i - (\tilde M^s \mathbf{\bar x})_i  \right|  \right] +  \mathbb{E}_n\left[  1(D_{s,i}^c) \left( |(C^s \mathbf{X})_i | + | (\tilde M^s \mathbf{\bar x})_i | \right)  \right] \\
&\leq  \mathbb{E}_n\left[ \left. \left| \sum_{|\bi| = s} \Pi_\bi \hat X_\bi  - (\breve M \mathbf{x})_{J_i} \right| \right| \hat J_\emptyset = J_i \right] + 2 \mathbb{P}_n\left(  D_{s,i}^c  \right),
\end{align*}
where the weights $\Pi_\bi$ and the terminal values $\hat X_\bi$ are defined on the marked tree $\mathcal{T}_{\emptyset(i)}(\hat J)$, and we have used the observation that $\| C^s \mathbf{X} \|_\infty \leq 1$ and $\| \tilde M^s \mathbf{\bar x} \|_\infty \leq 1$. 
\end{proof}

Next, define the vector $\mathbf{a}_s := (a_s(1),\dots,a_s(K))^{\top}$ given by:
\begin{equation} \label{eq:Arecursion}
a_s(r) := \mathbb{E}_n\left[ \left. \left| \sum_{|\bi|=s}\hat{\Pi}_\bi\hat{X}_\bi-(\breve{M}^s\mathbf{x})_r \right| \right| \hat{J}_\emptyset= r \right]. 
\end{equation}

Before proving the main theorem of this section, we give a bound for $\| \mathbf{a}_1 \|_\infty$.

\begin{lemma} \label{L.OneGeneration}
For $\mathbf{a}_1$ defined according to \eqref{eq:Arecursion}, and provided $\theta_n\to \infty$ as $n \to \infty$, there exists a constant $\Gamma' < \infty$ that does not depend on the choice of $\{\hat X_i\}$ such that 
\[ \| \mathbf{a}_1 \|_\infty  \leq \Gamma' \theta_n^{-1/2} . \] 
\end{lemma}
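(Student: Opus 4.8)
The plan is to recognize that, conditionally on $\hat J_\emptyset = r$, the first--generation sum $\sum_{|\bi|=1}\hat\Pi_\bi\hat X_\bi$ is exactly a ratio of random sums of the type controlled by Lemma~\ref{L.Ratios}, and that its centering $(\breve M\mathbf{x})_r$ is precisely the ratio of the corresponding means. Writing $\hat N_\emptyset^{(s)}$ for the number of type-$s$ offspring of the root (an independent Poisson variable with mean $q_{sr}^{(n)} = \kappa(s,r)\pi_s^{(n)}\theta_n$), and setting $S_s = \sum_{j=1}^{\hat N_\emptyset^{(s)}}\hat B_j^{(s)}$ and $\tilde S_s = \sum_{j=1}^{\hat N_\emptyset^{(s)}}\hat X_j^{(s)}\hat B_j^{(s)}$ with $\hat B_j^{(s)} \sim G_{r,s}$ and $\mathbb{E}_n[\hat X_j^{(s)}] = x_s$, the definition of the tree weights gives
\begin{equation*}
\sum_{|\bi|=1}\hat\Pi_\bi\hat X_\bi = \frac{\tilde S_1 + \dots + \tilde S_K}{S_1 + \dots + S_K}\,1\!\left(\textstyle\sum_s S_s > 0\right).
\end{equation*}
Since $\hat X$ and $\hat B$ are conditionally independent given the types, the ratio of means is $\frac{\sum_s q_{sr}^{(n)}x_s\beta_{r,s}}{\sum_s q_{sr}^{(n)}\beta_{r,s}} = \frac{\sum_s\beta_{r,s}\pi_s^{(n)}\kappa(s,r)x_s}{\mu_r^{(n)}} = (\breve M\mathbf{x})_r$, so $a_1(r)$ is exactly the expected absolute deviation of this ratio from its centering. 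For the degenerate rows $r \notin \mathcal{I}$ one has $q_{sr}^{(n)} = 0$ for all $s$ and $(\breve M\mathbf{x})_r = 0$, whence $a_1(r) = 0$; it therefore suffices to treat $r \in \mathcal{I}$.

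Next I would apply Lemma~\ref{L.Ratios}, whose moment generating function hypothesis on the offspring counts holds with equality for the Poisson variables $\hat N_\emptyset^{(s)}$. With the identifications above one gets $\mu = \theta_n\mu_r^{(n)}$ and $\nu = \theta_n\nu_r^{(n)}$, so that for every $\epsilon > 0$,
\begin{equation*}
\mathbb{P}_n\!\left(\left|\sum_{|\bi|=1}\hat\Pi_\bi\hat X_\bi - (\breve M\mathbf{x})_r\right| > \epsilon \,\Big|\, \hat J_\emptyset = r\right) \le 4\exp\!\left(-\frac{(\epsilon/2)^2\theta_n(\mu_r^{(n)})^2}{2\nu_r^{(n)}}\left(1 - \frac{H(\epsilon/2)\mu_r^{(n)}}{\nu_r^{(n)}}\right)\right),
\end{equation*}
up to the probability $\mathbb{P}_n(\sum_s S_s = 0)$ of the degenerate event where the ratio form does not literally apply; for $r\in\mathcal I$ the expected number of positively-weighted offspring grows linearly in $\theta_n$, so this probability is at most $e^{-c_r\theta_n}$ and its contribution to $a_1(r)$ is negligible. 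Using $(\mu_r^{(n)})^2/\nu_r^{(n)} \ge 1/\Delta_n$ and $\mu_r^{(n)}/\nu_r^{(n)} \le \Lambda_n$, the bracketed factor is at least $1 - H\epsilon\Lambda_n/2$, which is $\ge 1/2$ as soon as $\epsilon \le 1/(H\Lambda_n)$; in that range the exponent is at most $-\epsilon^2\theta_n/(16\Delta_n)$.

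Finally I would convert the tail bound into the expectation bound through $a_1(r) = \int_0^2 \mathbb{P}_n(|\cdots| > \epsilon \mid \hat J_\emptyset = r)\,d\epsilon$, using that both the ratio and its centering lie in $[-1,1]$ so the integrand vanishes past $\epsilon = 2$. I would split the integral at $\epsilon^\ast = \min\{1/(H\Lambda_n),\, 2\}$. On $[0,\epsilon^\ast]$ the Gaussian bound above applies and, extending the integral to $[0,\infty)$, contributes $\int_0^\infty 4\exp(-\epsilon^2\theta_n/(16\Delta_n))\,d\epsilon = O(\theta_n^{-1/2})$. On the bounded interval $[\epsilon^\ast,2]$ (non-empty only when $1/(H\Lambda_n) < 2$), monotonicity of the survival function together with its value at $\epsilon^\ast$ bounds the integrand by $4\exp(-\theta_n/(16\Delta_n H^2\Lambda_n^2))$, whose integral is $o(\theta_n^{-1/2})$ since $\theta_n\to\infty$ and $\Delta_n,\Lambda_n$ converge to positive constants. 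As all constants depend only on $H$, $\Delta_n$, $\Lambda_n$ and not on the laws of the $\hat X_i$, taking the maximum over $r\in\mathcal I$ yields $\|\mathbf a_1\|_\infty \le \Gamma'\theta_n^{-1/2}$ uniformly in the choice of $\{\hat X_i\}$.

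The main obstacle is the cubic correction term $H(\epsilon/2)^3\mu^3/(2\nu^2)$ inherited from Lemma~\ref{L.Ratios}: it destroys the Gaussian decay once $\epsilon$ reaches constant order $\gtrsim 1/(H\Lambda_n)$, so the bound cannot be integrated naively over all of $[0,2]$. The split at $\epsilon^\ast$ isolates the Gaussian bulk (which produces the $\theta_n^{-1/2}$ rate) from a constant-length tail, on which one must instead exploit monotonicity and the growth $\theta_n\to\infty$ to show the contribution decays at a strictly faster, exponential rate. A secondary and minor point is the careful accounting of the degenerate event $\{\sum_s S_s = 0\}$, which is negligible precisely because the offspring means grow like $\theta_n$.
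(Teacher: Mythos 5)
Your proposal is correct and follows essentially the same route as the paper: both reduce $a_1(r)$ to the deviation of a ratio of Poisson random sums, invoke Lemma~\ref{L.Ratios} with $\mu = \theta_n\mu_r^{(n)}$ and $\nu = \theta_n\nu_r^{(n)}$, split the expectation at a cutoff of order $1/(H\Lambda_n)$ to keep the cubic correction term under control, and integrate the resulting Gaussian tail to get $O(\theta_n^{-1/2})$ plus an exponentially small remainder, with constants depending only on $H$, $\Delta_n$, $\Lambda_n$ and hence uniform in $\{\hat X_i\}$. The only immaterial differences are your cutoff $1/(H\Lambda_n)$ versus the paper's $2/(3H\Lambda_n)$, and your explicit treatment of the degenerate event $\{\sum_s S_s = 0\}$, which the paper leaves implicit in the indicator defining $\hat C$.
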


\begin{proof}
Note that if the $r$th row of $\breve M$ is zero, then $a_1(r) = 0$, and since the non-zero rows of $\breve M$ coincide with those of $M$, it suffices to consider $r \in \mathcal{I}$. For $r \in \mathcal{I}$, 
\[ a_1(r) = \mathbb{E}_n\left[ \left. \left| \sum_{i=1}^{\hat N_\emptyset} \hat C_i \hat{X}_i-(\breve{M}\mathbf{x})_r \right| \right| \hat{J}_\emptyset= r \right], \]
and let 
\[ S_{j} := \sum_{i=1}^{\hat N_\emptyset} \hat B_i \hat X_i 1(\hat J_i = j) \qquad \text{and} \qquad \tilde S_{j} := \sum_{i=1}^{\hat N_\emptyset} \hat B_i \hat X_i 1(\hat J_i = j), \]
so that 
\[ \sum_{i=1}^{\hat N_\emptyset} \hat C_i \hat{X}_i = \sum_{i=1}^{\hat N_\emptyset} \frac{\hat B_i}{\sum_{s=1}^{\hat N_\emptyset} \hat B_s}  \hat{X}_i = \frac{\tilde S_1 + \dots + \tilde S_K}{S_1 + \dots + S_K}. \]
Define the event
\[F = \left\{ \left| \frac{ \tilde S_1 + \dots + \tilde S_K}{S_1 + \dots + S_K} - (\breve{M} \mathbf{x})_{\hat J_\emptyset} \right| \leq 2/(3H\Lambda_n)  \right\}, \]
and condition on $\hat J_\emptyset$ to obtain that
\begin{align*}
&\mathbb{E}_n\left[ \left. \left| \sum_{i=1}^{\hat N_\emptyset} \hat C_i \hat{X}_i-(\breve{M}\mathbf{x})_r \right| \right| \hat J_\emptyset = r \right] \\
&\leq \mathbb{E}_n\left[ \left. 1(F) \left| \sum_{i=1}^{\hat N_\emptyset} \hat C_i \hat{X}_i-(\breve{M}\mathbf{x})_r \right| \right| \hat J_\emptyset = r \right] + \mathbb{E}_n\left[ \left. 1(F^c) \left( \sum_{i=1}^{\hat N_\emptyset} \hat C_i |\hat{X}_i| + |(\breve{M}\mathbf{x})_r| \right) \right| \hat J_\emptyset = r \right]   \\
&= \int_0^{2/(3H\Lambda_n)} \mathbb{P}_n\left( \left. \left| \sum_{i=1}^{\hat N_\emptyset} \hat C_i \hat{X}_i-(\breve{M}\mathbf{x})_r \right| > t \right| \hat J_\emptyset = r\right) dt + 2\,\mathbb{P}_n\left( F^c \left| \hat J_\emptyset = r \right. \right)  ,
\end{align*}
where we used the observation that $\sum_{i=1}^{\hat N_\emptyset} \hat C_i | \hat X_i| \leq 1$ and $|(\breve M \mathbf{x})_r| \leq \| \breve M \mathbf{x} \|_\infty \leq \| \breve M \|_\infty \| \mathbf{x} \|_\infty \leq 1$, and therefore, the integrand is bounded by one. 

Next, note that conditionally on $\hat J_\emptyset =r$, we have that $\tilde S_j \stackrel{\mathcal{D}}{=} \sum_{i=1}^{N_j} B_i^{(r,j)} X_i^{(j)}$, where $N_j$ is a Poisson random variable with mean $q_{jr}^{(n)} = \kappa(j,r) \pi_j^{(n)} \theta_n$, and is independent of the i.i.d.~sequences $\{ B_i^{(r,j)}: i \geq 1\}$ and $\{ X_i^{(j)}: i \geq 1\}$, with $B_i^{(r,j)} \stackrel{\mathcal{D}}{=} (\hat B_i |\hat J_\emptyset = r, \hat J_i = j)$ and $X_i^{(j)} \stackrel{\mathcal{D}}{=} (\hat X_i | \hat J_i = j)$.  Similarly, $S_j \stackrel{\mathcal{D}}{=} \sum_{i=1}^{N_r} B_i^{(r,j)}$, and all the random variables $\{ S_{j}, \tilde S_j: 1 \leq j \leq K\}$ are conditionally independent of each other given $\hat J_\emptyset$. It follows from Lemma~\ref{L.Ratios} that for $r \in \mathcal{I}$, 
\begin{align*}
\mathbb{P}_n\left( \left. \left| \sum_{i=1}^{\hat N_\emptyset} \hat C_i \hat{X}_i-(\breve{M}\mathbf{x})_r \right| > t \right| \hat J_\emptyset =r \right) &\leq 4 \, \text{exp}\left( -\frac{(t/2)^2 (\theta_n \mu_r^{(n)})^2}{2 \theta_n \nu_r^{(n)}} + \frac{H (t/2)^3 (\theta_n \mu_r^{(n)})^3}{2 ( \theta_n \nu_r^{(n)})^2} \right) \\
&\leq 4 \,  \text{exp}\left( -\frac{(t/2)^2 \theta_n }{2 \Delta_n} \left( 1 - H \Lambda_n t/2 \right)  \right) ,
\end{align*}
as in the proof of Theorem \ref{T.InfinityNorms},
and
\[ \mathbb{P}_n\left( F^c \left| \hat J_\emptyset = r \right. \right)  \leq 4 \,  \text{exp}\left( -\frac{(1/(3H\Lambda_n))^2 \theta_n }{2 \Delta_n} \left( 1 - H \Lambda_n /(3H\Lambda_n) \right)  \right) = 4 \, \text{exp} \left( -\frac{\theta_n}{27 \Delta_n ( H \Lambda_n)^2} \right). \]
Since the bounds do not depend on $\hat J_\emptyset$, we obtain that
\begin{align*}
 \| \mathbf{a}_1 \|_\infty &\leq 4 \int_0^{2/(3H\Lambda_n)} \text{exp}\left( -\frac{(t/2)^2 \theta_n }{2 \Delta_n} \left( 1 - H \Lambda_n t/2 \right)  \right) dt + 8 \, \text{exp} \left( -\frac{\theta_n}{27 \Delta_n ( H \Lambda_n)^2} \right) \\
 &\leq 4  \int_0^{2/(3H\Lambda_n)} \text{exp}\left( -\frac{(t/2)^2 \theta_n }{3 \Delta_n}   \right) dt + 8 \, \text{exp} \left( -\frac{\theta_n}{27 \Delta_n ( H \Lambda_n)^2} \right) \\
 &= \frac{4}{\sqrt{\theta_n/(6\Delta_n)}} \int_0^{2 \sqrt{\theta_n/(6\Delta_n)}/ (3H\Lambda_n)} e^{-z^2/2} dz + 8 \, \text{exp} \left( -\frac{\theta_n}{27 \Delta_n ( H \Lambda_n)^2} \right) \\
 &\leq \frac{8 \sqrt{3 \Delta_n \pi}  }{\sqrt{\theta_n}} \int_0^\infty \frac{e^{-z^2/2}}{\sqrt{2\pi}} dz  + 8 \, \text{exp} \left( -\frac{\theta_n}{27 \Delta_n ( H \Lambda_n)^2} \right) \\
 &= \frac{4 \sqrt{3\Delta_n \pi}}{\sqrt{\theta_n}} + 8 \, \text{exp} \left( -\frac{\theta_n}{27 \Delta_n ( H \Lambda_n)^2} \right) = O\left( \theta_n^{-1/2} \right)
 \end{align*}
as $n \to \infty$. Noting that the last estimates do not depend on the choice of the $\{ \hat X_i\}$ completes the proof. 
\end{proof}

As a corollary to the previous lemma, we obtain the following bound for $\| \mathbf{a}_s \|_\infty $.

\begin{cor}\label{C.Recursive}
For $s \geq 1$ and $\mathbf{a}_s$ defined according to \eqref{eq:Arecursion}, and provided $\theta_n\to \infty$ as $n \to \infty$, there exists a constant $\Gamma' < \infty$ that does not depend on the choice of $\{\hat X_i\}$ such that 
 \[ \| \mathbf{a}_s \|_\infty \leq   \Gamma' s \theta_n^{-1/2} . \]  
\end{cor}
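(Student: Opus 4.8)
The plan is to establish a one-step recursive inequality of the form $\|\mathbf{a}_s\|_\infty \le \|\mathbf{a}_{s-1}\|_\infty + \Gamma'\theta_n^{-1/2}$ and then iterate it, using Lemma~\ref{L.OneGeneration} both as the base case $\|\mathbf{a}_1\|_\infty \le \Gamma'\theta_n^{-1/2}$ and, at each step, to control one of two error terms. So the only real content is producing the recursion from the branching structure of the coupled tree $\mathcal{T}(\hat J)$.

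First I would peel off the root's generation. Since $\hat\Pi_\emptyset = 1$ and $\hat\Pi_{(\bi,j)} = \hat\Pi_\bi\,\hat C_{(\bi,j)}$, writing $\hat\Pi^{(j)}_{\bi'}$ for the weight products computed \emph{inside} the subtree rooted at a child $j$ (with that child treated as a fresh root), one has
\[ \sum_{|\bi|=s}\hat\Pi_\bi \hat X_\bi = \sum_{j=1}^{\hat N_\emptyset}\hat C_j Y_j, \qquad Y_j := \sum_{|\bi'|=s-1}\hat\Pi^{(j)}_{\bi'}\hat X_{(j,\bi')}. \]
Setting $\mathbf{z} := \breve M^{s-1}\mathbf{x}$ and writing $z_{\hat J_j}$ for its $\hat J_j$-th entry, so that $(\breve M\mathbf{z})_r = (\breve M^s\mathbf{x})_r$, the triangle inequality splits the deviation as
\[ \Big|\sum_j \hat C_j Y_j - (\breve M^s\mathbf{x})_r\Big| \le \sum_j \hat C_j\,\big|Y_j - z_{\hat J_j}\big| + \Big|\sum_j \hat C_j z_{\hat J_j} - (\breve M\mathbf{z})_r\Big|. \]

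For the first term I would condition on the first generation, i.e.\ the offspring types and the normalized weights $\{\hat C_j\}$. By the recursive construction of the marked Galton--Watson tree, conditionally on $\hat J_j = s'$ the subtree rooted at $j$ is an independent copy of $\mathcal{T}(\hat J)$ with root type $s'$, and $Y_j$ is exactly its generation-$(s-1)$ weighted sum; hence $\mathbb{E}_n[\,|Y_j - z_{\hat J_j}|\mid \hat J_\emptyset = r,\ \text{first generation}\,] = a_{s-1}(\hat J_j)$. Because $\sum_j \hat C_j \le 1$ always, taking expectations yields $\mathbb{E}_n[\sum_j \hat C_j|Y_j - z_{\hat J_j}|\mid \hat J_\emptyset = r] \le \|\mathbf{a}_{s-1}\|_\infty$. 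For the second term, observe that $z_{\hat J_j} = (\breve M^{s-1}\mathbf{x})_{\hat J_j}$ is a deterministic function of the offspring type, bounded by $1$ in absolute value (since $\|\breve M\|_\infty = 1$), with conditional mean $z_{s'}$ given type $s'$; it is therefore an admissible choice of terminal values in Lemma~\ref{L.OneGeneration} applied with the vector $\mathbf{z}$ in place of $\mathbf{x}$, and since the constant there does not depend on the terminal values we get $\mathbb{E}_n[\,|\sum_j \hat C_j z_{\hat J_j} - (\breve M\mathbf{z})_r|\mid \hat J_\emptyset = r\,] \le \Gamma'\theta_n^{-1/2}$.

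Combining the two bounds gives $a_s(r) \le \|\mathbf{a}_{s-1}\|_\infty + \Gamma'\theta_n^{-1/2}$ uniformly in $r$, hence $\|\mathbf{a}_s\|_\infty \le \|\mathbf{a}_{s-1}\|_\infty + \Gamma'\theta_n^{-1/2}$, and iterating from the base case produces $\|\mathbf{a}_s\|_\infty \le \Gamma' s\,\theta_n^{-1/2}$. The step I expect to require the most care is the conditional-independence argument for the first term: one must verify through the Ulam--Harris/branching construction that, given the types, the subtree sums $\{Y_j\}$ are conditionally independent of one another and of the first-generation weights $\{\hat C_j\}$, so that $\mathbb{E}_n[\,|Y_j - z_{\hat J_j}|\mid\cdots\,]$ genuinely collapses to $a_{s-1}(\hat J_j)$ and the weights factor out cleanly, leaving the harmless factor $\sum_j \hat C_j \le 1$.
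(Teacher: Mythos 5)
Your proposal is correct and takes essentially the same route as the paper's proof: peel off the root generation, split by the triangle inequality into the subtree deviations (which collapse to $a_{s-1}(\hat J_j)$ by the branching property, with the weights factoring out via $\sum_j \hat C_j \le 1$) plus a one-generation term with deterministic terminal values $z_{\hat J_j} = (\breve M^{s-1}\mathbf{x})_{\hat J_j}$ that is handled by Lemma~\ref{L.OneGeneration}, then iterate the resulting recursion $\|\mathbf{a}_s\|_\infty \le \|\mathbf{a}_{s-1}\|_\infty + \Gamma'\theta_n^{-1/2}$. The only cosmetic difference is that you bound $\sum_j \hat C_j \le 1$ where the paper asserts equality (your version is in fact slightly more careful, since the weights sum to zero on the event $\hat N_\emptyset = 0$).
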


\begin{proof}
To derive a recursion for $\mathbf{a}_s$, $s \geq 1$, note that for each $j\in\{1,\dots,K\}$,
\begin{align*}
a_s(j) &= \mathbb{E}_n\left[ \left. \left| \sum_{r=1}^{\hat N_\emptyset} \hat C_r \sum_{|(r,\bi)|=s} \frac{\hat{\Pi}_{(r,\bi)}}{\hat C_{r}} \hat{X}_{(r,\bi)} - \sum_{r=1}^{\hat N_\emptyset} \hat C_r (\breve M^{s-1} \mathbf{x})_{\hat J_r} +  \sum_{r=1}^{\hat N_\emptyset} \hat C_r (\breve M^{s-1} \mathbf{x})_{\hat J_r} -(\breve{M}^s\mathbf{x})_j \right| \right| \hat{J}_\emptyset=j \right] \\
&\leq \mathbb{E}_n\left[ \left.\sum_{r=1}^{\hat N_\emptyset} \hat C_r \left| \sum_{|(r,\bi)|=s} \frac{\hat{\Pi}_{(r,\bi)}}{\hat C_{r}} \hat{X}_{(r,\bi)} - (\breve M^{s-1} \mathbf{x})_{\hat J_r}  \right| \right| \hat{J}_\emptyset=j \right]  \\
&\hspace{5mm} + \mathbb{E}_n\left[ \left. \left|   \sum_{r=1}^{\hat N_\emptyset} \hat C_r (\breve M^{s-1} \mathbf{x})_{\hat J_r} -(\breve{M}^s\mathbf{x})_j \right| \right| \hat{J}_\emptyset=j \right] \\
&= \mathbb{E}_n\left[ \left.\sum_{r=1}^{\hat N_\emptyset} \hat C_r \mathbb{E}_n\left[ \left. \left| \sum_{|(r,\bi)|=s} \frac{\hat{\Pi}_{(r,\bi)}}{\hat C_{r}} \hat{X}_{(r,\bi)} - (\breve M^{s-1} \mathbf{x})_{\hat J_r}  \right| \right| \hat N_\emptyset, \{ \hat C_r, \hat J_r \}_{r=1}^{\hat N_\emptyset} \right] \right| \hat{J}_\emptyset=j \right]  \\
&\hspace{5mm} + \mathbb{E}_n\left[ \left. \left|   \sum_{l=1}^K \sum_{r=1}^{\hat N_\emptyset} \hat C_r 1(\hat J_r = l)  (\breve M^{s-1} \mathbf{x})_{l} - \sum_{l=1}^K \breve{M}_{jl} (\breve{M}^{s-1} \mathbf{x})_l  \right| \right| \hat{J}_\emptyset=j \right] \\
&= \mathbb{E}_n\left[ \left.\sum_{r=1}^{\hat N_\emptyset} \hat C_r a_{s-1}(\hat{J}_r) \right| \hat{J}_\emptyset=j \right]   + \mathbb{E}_n\left[ \left. \left|   \sum_{l=1}^K  (\breve M^{s-1} \mathbf{x})_{l} \left( \sum_{r=1}^{\hat N_\emptyset} \hat C_r 1(\hat J_r = l) -   \breve{M}_{jl} \right)  \right| \right| \hat{J}_\emptyset=j \right]  \\
&\leq \max_{1 \leq j \leq K} a_{s-1}(j) \mathbb{E}_n\left[ \left. \sum_{r=1}^{\hat N_\emptyset} \hat C_r \right| \hat J_\emptyset = j \right] + \mathbb{E}_n\left[ \left. \left|   \sum_{l=1}^K  (\breve M^{s-1} \mathbf{x})_{l} \left( \sum_{r=1}^{\hat N_\emptyset} \hat C_r 1(\hat J_r = l) -   \breve{M}_{jl} \right)  \right| \right| \hat{J}_\emptyset=j \right] . 
\end{align*}
To simplify the bound, recall that $\sum_{r=1}^{\hat N_\emptyset} \hat C_r = 1$. To bound the second expectation, let $\hat Y_r = y_l =  (\breve M^{s-1} \mathbf{x})_l$ for all $r$ such that $\hat J_r = l$, define $\mathbf{y} = (y_1, \dots, y_K)^\top$, and note that 
\[  \mathbb{E}_n\left[ \left. \left|   \sum_{l=1}^K  (\breve M^{s-1} \mathbf{x})_{l} \left( \sum_{r=1}^{\hat N_\emptyset} \hat C_r 1(\hat J_r = l) -   \breve{M}_{jl} \right)  \right| \right| \hat{J}_\emptyset=j \right]  =  \mathbb{E}_n\left[ \left. \left|   \sum_{r=1}^{\hat N_\emptyset} \hat C_r \hat Y_r  -   (\breve{M} \mathbf{y})_j   \right| \right| \hat{J}_\emptyset=j \right] , \]
so by Lemma~\ref{L.OneGeneration} we get 
\[ \max_{1 \leq j \leq K} \mathbb{E}_n\left[ \left. \left|   \sum_{r=1}^{\hat N_\emptyset} \hat C_r \hat Y_r  -   (\breve{M} \mathbf{y})_j   \right| \right| \hat{J}_\emptyset=j \right] \leq  \Gamma' s \theta_n^{-1/2}   \]
for some constant $\Gamma'$ that does not depend on the $\{\hat Y_i\}$. Therefore, 
\[ \| \mathbf{a}_s \|_\infty \leq \| \mathbf{a}_{s-1} \|_\infty +  \Gamma' \theta_n^{-1/2} . \]
Iterating the recursion gives
\[ \| \mathbf{a}_s \|_\infty \leq  \Gamma' s \theta_n^{-1/2} . \]
\end{proof}

We now state the counterpart of Theorem~\ref{T.InfinityNorms} for this regime. 

\begin{theorem} \label{T.OneNorms}
Let $\mathbf{X} = (X_1, \dots, X_n)^\top \in [-1,1]^n$ be a random vector such that $X_i$ has distribution $H_{\mathbf{A}_i}$, independently of everything else; let $\mathbf{\bar x} =  (\bar x_1, \dots, \bar x_n)^\top = \mathbb{E}_n[\mathbf{X}]$ denote its mean.  Furthermore, suppose $\bar x_i = x_r$ for all $i$ such that $J_i =r$, $1\leq r \leq K$. Then, there exists a constant $\Gamma'$ that does not depend on the choice of $\mathbf{X}$ such that for any $i \in V_n$ and $s \geq 1$, 
\[  \mathbb{E}_n\left[ \left| (C^s \mathbf{X})_i - (\tilde M^s \mathbf{\bar x})_i \right| \right] \leq \frac{\Gamma' s}{\theta_n^{1/2}} +  2 \mathbb{P}_n\left( D_{s,i}^c \right). \]
\end{theorem}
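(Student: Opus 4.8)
The plan is to obtain Theorem~\ref{T.OneNorms} as a direct assembly of Lemma~\ref{L.TreeWeights} and Corollary~\ref{C.Recursive}, since the analytic work has already been carried out in those results. First I would observe that the vector $\mathbf{X}$ satisfies precisely the hypotheses of Lemma~\ref{L.TreeWeights}: its coordinates lie in $[-1,1]$, each $X_i$ is distributed according to $H_{\mathbf{A}_i}$ independently of everything else, and its conditional mean $\bar x_i$ depends only on the community label, namely $\bar x_i = x_r$ when $J_i = r$. Setting $\mathbf{x} = (x_1, \dots, x_K)^\top$, Lemma~\ref{L.TreeWeights} then gives, for each fixed $i \in V_n$ and $s \geq 1$,
\[ \mathbb{E}_n\left[ \left| (C^s \mathbf{X})_i - (\tilde M^s \mathbf{\bar x})_i \right| \right] \leq \mathbb{E}_n\left[ \left. \left| \sum_{|\bi| = s} \Pi_\bi \hat X_\bi - (\breve M^s \mathbf{x})_{J_i} \right| \right| \hat J_\emptyset = J_i \right] + 2 \mathbb{P}_n\left( D_{s,i}^c \right), \]
where the weights $\Pi_\bi$ and the terminal values $\hat X_\bi \sim H_{\boldsymbol{\hat A}_\bi}$ live on the coupled tree $\mathcal{T}_{\emptyset(i)}(\hat J)$.

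The key observation is that, upon conditioning on $\hat J_\emptyset = J_i$, the first term on the right-hand side is exactly $a_s(J_i)$ in the notation of \eqref{eq:Arecursion}. Bounding $a_s(J_i) \leq \| \mathbf{a}_s \|_\infty$ and invoking Corollary~\ref{C.Recursive}, which supplies a constant $\Gamma' < \infty$, independent of the choice of terminal marks, with $\| \mathbf{a}_s \|_\infty \leq \Gamma' s \theta_n^{-1/2}$, I would immediately conclude
\[ \mathbb{E}_n\left[ \left| (C^s \mathbf{X})_i - (\tilde M^s \mathbf{\bar x})_i \right| \right] \leq \frac{\Gamma' s}{\theta_n^{1/2}} + 2 \mathbb{P}_n\left( D_{s,i}^c \right), \]
which is the asserted bound.

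There is no genuine obstacle here beyond bookkeeping: the two substantive estimates — the single-generation ratio concentration (Lemma~\ref{L.OneGeneration}, built on Lemma~\ref{L.Ratios}) and its propagation across generations (Corollary~\ref{C.Recursive}) — are already established. The only point demanding care is the uniformity of $\Gamma'$ in the choice of $\mathbf{X}$; this is inherited verbatim from Lemma~\ref{L.OneGeneration} and Corollary~\ref{C.Recursive}, whose bounds hold for arbitrary bounded terminal values $\{\hat X_i\}$, and in particular for those copied from $\mathbf{X}$ on the coupling event $D_{s,i}$. I would also stress that the estimate is pointwise in $i$ and $s$, so no union bound is needed at this stage; the subsequent passage to $\max_{i \in V_n}$ in \eqref{eq:OneNorms} and the control of the coupling-failure term $\mathbb{P}_n(D_{s,i}^c)$ via Theorem~\ref{T.Coupling} are handled in the final assembly of Theorem~\ref{MFA_generalthm}.
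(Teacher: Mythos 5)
Your proposal is correct and is essentially verbatim the paper's own proof: apply Lemma~\ref{L.TreeWeights}, recognize the conditional tree expectation as $a_s(J_i)$ from \eqref{eq:Arecursion}, bound it by $\|\mathbf{a}_s\|_\infty$, and invoke Corollary~\ref{C.Recursive}, with uniformity of $\Gamma'$ in $\mathbf{X}$ inherited exactly as you describe. No gaps.
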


\begin{proof}
By Lemma~\ref{L.TreeWeights} and Corollary~\ref{C.Recursive}, we have that
\begin{align*}
  \mathbb{E}_n\left[ \left| (C^s \mathbf{X})_i - (\tilde M^s \mathbf{\bar x})_i \right| \right] &\leq \mathbb{E}_n\left[ \left. \left| \sum_{|\bi| = s} \Pi_\bi \hat X_\bi  - (\breve M \mathbf{x})_{J_i} \right| \right| \hat J_\emptyset = J_i \right] + 2  \mathbb{P}_n\left(  D_{s,i}^c  \right) \\
 &\leq  a_s(J_i)  + 2 \mathbb{P}_n\left(  D_{s,i}^c  \right) \leq  \| \mathbf{a}_s \|_\infty + 2 \mathbb{P}_n\left(  D_{s,i}^c  \right) \\
 &\leq  \Gamma' s \theta_n^{-1/2} + 2  \mathbb{P}_n\left(  D_{s,i}^c  \right). 
 \end{align*}
\end{proof}

We can now give the proof to the second part of Theorem~\ref{MFA_generalthm}. 

\begin{proof}[Proof of Theorem~\ref{MFA_generalthm}]
Suppose $\theta_n \to \infty$ as $n \to \infty$, and recall that we need to prove that
\[ \sup_{k \geq 0} \, \max_{i \in V_n} \mathbb{E}_n \left[ \left\| \mathbf{R}_i^{(k)} - \boldsymbol{\mathcal{R}}_i^{(k)} \right\|_1 \right] \xrightarrow{P} 0 \]
as $n \to \infty$. Note that by Remark~\ref{R.Density}(a), it suffices to assume that $\theta_n \leq (\log n)^b$ for some $b > 1$. 
To start, note that by Theorem~\ref{T.interm_pnorm}, we have that for any fixed $k \geq 0$,
\begin{align*}
 \max_{i \in V_n} \, \mathbb{E}_n\left[ \left\|  \mathbf{R}_i^{(k)} - \boldsymbol{\mathcal{R}}_i^{(k)} \right\|_1 \right] &\leq  \max_{i \in V_n}  \, \mathbb{E}_n\left[ \left\|  \mathbf{R}_i^{(k)} - \mathbf{\tilde R}_i^{(k)} \right\|_1 \right] +  \max_{i \in V_n} \, \mathbb{E}_n\left[ \left\|  \mathbf{\tilde R}_i^{(k)} - \boldsymbol{\mathcal{R}}_i^{(k)} \right\|_1 \right]  \\
  &\leq 1(k\geq 2) \sum_{t=1}^{k-1} \sum_{s=1}^t a_{s,t} \max_{i \in V_n} \, \mathbb{E}_n\left[ \left\| (C^s W^{(0)})_{i\bullet} - (\tilde M^s \breve W)_{i\bullet} \right\|_1 \right] \\
  &\hspace{5mm} + \sum_{s=1}^k a_{s,k} \max_{i \in V_n} \, \mathbb{E}_n\left[ \left\| (C^s R^{(0)})_{i\bullet} - (\tilde M^s \breve R)_{i\bullet} \right\|_1 \right] + \mathbb{E}_n\left[  \left\| \tilde R^{(k)} - \mathcal{R}^{(k)} \right\|_\infty \right].
\end{align*}
Moreover, by Theorem~\ref{T.interm_meanfield},
\[ \mathbb{E}_n\left[  \left\| \tilde R^{(k)} - \mathcal{R}^{(k)} \right\|_\infty \right] \leq \mathbb{E}_n\left[ \sup_{k \geq 0} \left\| \tilde R^{(k)} - \mathcal{R}^{(k)} \right\|_\infty \right] \leq \frac{\ell c}{d^2} \cdot \mathcal{E}_n. \]

Now use Theorem~\ref{T.OneNorms} to obtain that
\[ \mathbb{E}_n\left[ \left\| (C^s W^{(0)})_{i\bullet} - (\tilde M^s \breve W)_{i\bullet} \right\|_1 \right] = \sum_{j=1}^\ell \mathbb{E}_n\left[ \left\| (C^s W^{(0)}_{\bullet j})_{i} - (\tilde M^s \breve W_{\bullet j})_{i} \right\|_1 \right] \leq \ell \left( \Gamma' s \theta_n^{-1/2} + 2 \mathbb{P}_n(D_{s,i}^c) \right)  \]
and
\[ \mathbb{E}_n\left[ \left\| (C^s R^{(0)})_{i\bullet} - (\tilde M^s \breve R)_{i\bullet} \right\|_1 \right] \leq \ell \left( \Gamma' s \theta_n^{-1/2} + 2 \mathbb{P}_n(D_{s,i}^c) \right).  \]
It follows that
\begin{align*}
    \max_{i \in V_n} \, \mathbb{E}_n\left[ \left\|  \mathbf{R}_i^{(k)} - \boldsymbol{\mathcal{R}}_i^{(k)} \right\|_1 \right] &\leq 1(k\geq 2) \sum_{t=1}^{k-1} \sum_{s=1}^t a_{s,t} \max_{i \in V_n} \,  \ell \left( \Gamma' s \theta_n^{-1/2} + 2 \mathbb{P}_n(D_{s,i}^c) \right) \\
  &\hspace{5mm} + \sum_{s=1}^k a_{s,k} \max_{i \in V_n} \,  \ell \left( \Gamma' s \theta_n^{-1/2} + 2 \mathbb{P}_n(D_{s,i}^c) \right) + \frac{\ell c}{d^2} \cdot \mathcal{E}_n \\
  &= \ell \Gamma' \theta_n^{-1/2}  \sum_{t=1}^{k} \sum_{s=1}^t a_{s,t} s + 2  \sum_{t=1}^{k} \sum_{s=1}^t a_{s,t} \max_{i \in V_n} \mathbb{P}_n(D_{s,i}^c) + \frac{\ell c}{d^2} \cdot \mathcal{E}_n \\
  &= \ell \Gamma' \theta_n^{-1/2} c \sum_{t=1}^k (1-d)^{t-1}t  + 2  \sum_{t=1}^{k} \sum_{s=1}^t a_{s,t} \max_{i \in V_n} \mathbb{P}_n(D_{s,i}^c) + \frac{\ell c}{d^2} \cdot \mathcal{E}_n,
\end{align*}
where in the last equality we used \eqref{eq:BinMean}. Since $\sum_{t=1}^k (1-d)^{t-1} t \leq d^{-2}$ for all $k \geq 0$, and 
\begin{align*}
\sum_{t=1}^{k} \sum_{s=1}^t a_{s,t} \max_{i \in V_n} \mathbb{P}_n(D_{s,i}^c) &=  \sum_{s=1}^k \max_{i \in V_n} \mathbb{P}_n(D_{s,i}^c) c^s \sum_{t=s}^{k}  \binom{t}{s} (1-c-d)^{t-s}  \\
&=  \sum_{s=1}^k \max_{i \in V_n} \mathbb{P}_n(D_{s,i}^c) c^s (c+d)^{-s-1} \sum_{m=0}^{k-s}  \binom{m+s}{m} (1-c-d)^{m} (c+d)^{s+1} \\
&\leq \sum_{s=1}^\infty \max_{i \in V_n} \mathbb{P}_n(D_{s,i}^c) \frac{c^s}{(c+d)^{s+1}} =  \frac{c}{(c+d)d} \mathbb{E}_n\left[ \max_{i \in V_n} \mathbb{P}_n(D_{T,i}^c) \right],
\end{align*}
where $T$ is a geometric random variable on $\{1,2,\dots\}$ with success probability $p = d/(c+d)$, we have shown that
\[ \sup_{k\geq 0} \max_{i \in V_n} \, \mathbb{E}_n\left[ \left\|  \mathbf{R}_i^{(k)} - \boldsymbol{\mathcal{R}}_i^{(k)} \right\|_1 \right]  \leq \frac{\ell c}{d^2} \left(\Gamma' \theta_n^{-1/2} + \mathcal{E}_n \right) + \frac{2c}{(c+d)d} \mathbb{E}_n\left[ \max_{i \in V_n} \mathbb{P}_n(D_{T,i}^c) \right] .  \]

It only remains to show that $\max_{i \in V_n} \mathbb{P}_n(D_{T,i}^c) \xrightarrow{P} 0$ as $n \to \infty$ for any fixed $T$, since then the bounded convergence theorem would complete the proof. To see this is the case, note that all vertices $\{i \in V_n: J_i = r\}$, $1\leq r \leq K$, are exchangeable, so for any $i \in V_n$,
\begin{align*}
     \mathbb{P}_n(D_{T,i}^c) &= \sum_{r=1}^K \mathbb{P}_n(D_{T,i}^c) 1(J_i = r) =  \sum_{r=1}^K \frac{1}{\pi_r^{(n)} n} \sum_{m=1}^n \mathbb{P}_n(D_{T,m}^c) 1(J_m = r) = \frac{1}{n} \sum_{m=1}^n \mathbb{P}_n(D_{T,m}^c) \sum_{r=1}^K \frac{1(J_m = r)}{\pi_r^{(n)}},
\end{align*}
and therefore, by Theorem~\ref{T.Coupling} and Assumption~\ref{regularity_condition_communities},
\[ \max_{i \in V_n} \mathbb{P}_n(D_{T,i}^c) \leq  \frac{1}{n} \sum_{m=1}^n \mathbb{P}_n(D_{T,m}^c) \cdot \sum_{r=1}^K\frac{1}{\pi_r^{(n)}} \xrightarrow{P} 0 \]
 as $n\to \infty$. This completes the proof.
\end{proof}

The last section of the paper contains the proofs of Theorems~\ref{thm_LWCprob} and \ref{thm_convergence_in_prob}, which are a consequence of Theorem~\ref{MFA_generalthm}.

\subsection{Proofs of Theorems~\ref{thm_LWCprob} and \ref{thm_convergence_in_prob} } \label{SS.LWCprob}

We start by proving Theorem~\ref{thm_LWCprob}, which focuses on the trajectories of the process $\{ R^{(k)}: k \geq 0\}$.

\begin{proof}[Proof of Theorem~\ref{thm_LWCprob}]
Fix $k \geq 0$ and recall that $V_{k,i} \in [-1,1]^{\ell \times (k+1)}$ is the matrix having $j$th column $(V_{k,i})_{\bullet j} = \mathbf{R}_i^{(j-1)}$, and $\mathcal{V}_k \in [-1,1]^{\ell \times (k+1)}$ is the matrix having $j$th column $(\mathcal{V}_k)_{\bullet j} = \boldsymbol{\mathcal{R}}_\emptyset^{(j-1)}$. Assume $f: [-1,1]^{\ell \times (k+1)} \to \mathbb{R}$ is bounded and continuous with respect to $\| \cdot \|_1$ (the induced operator norm) . Since $f$ is continuous on a compact set, it is also uniformly continuous, so for every $\epsilon > 0$ there exists a $\delta > 0$ such that
\[ \left| f( X )-f( Y )\right| < \varepsilon \qquad
\text{ whenever } \qquad  \| X - Y \|_1 < \delta. \]
Let $\mathcal{V}_{k,i} \in [-1,1]^{\ell \times (k+1)}$ denote the matrix having $j$th column $(\mathcal{V}_{k,i})_{\bullet j} = \boldsymbol{\mathcal{R}}_i^{(j-1)}$, and let 
\[f^* = \max_{X \in [-1,1]^{\ell \times (k+1)}} |f(X)|.\]
It follows that for any $1 \leq r \leq K$,
\begin{align*}
   & \dfrac{1}{n}\sum_{i=1}^n \left| f\left( V_{k,i} \right) - f\left( \mathcal{V}_{k,i} \right)\right| 1(J_i = r)  \\ 
    &\hspace{5mm}\leq \varepsilon\cdot\dfrac{1}{n}\sum_{i=1}^n1\left( \| V_{k,i} - \mathcal{V}_{k,i} \|_1 <\delta \right)  + 2 f^*\cdot \dfrac{1}{n}\sum_{i=1}^n1\left( \| V_{k,i} - \mathcal{V}_{k,i} \|_1 \geq\delta\right) \\ 
    &\hspace{5mm}\leq \varepsilon+ \dfrac{2 f^*}{\delta} \cdot\dfrac{1}{n}\sum_{i=1}^n \| V_{k,i} - \mathcal{V}_{k,i} \|_1\\
    &\hspace{5mm}= \varepsilon+ \dfrac{2 f^*}{\delta} \cdot \frac{1}{n}\sum_{i=1}^n \max_{0\leq m\leq k} \left\| \mathbf{R}_i^{(m)} - \boldsymbol{\mathcal{R}}_i^{(m)} \right\|_1 \\ 
    &\hspace{5mm}\leq \varepsilon+ \dfrac{2 f^*}{\delta} \cdot \dfrac{1}{n} \sum_{m=0}^k\sum_{i=1}^n \left\|\textbf{R}_i^{(m)}-\boldsymbol{\mathcal{R}}_i^{(m)}\right\|_1 
\end{align*}
\noindent
Take expectations on both sides to obtain 
\begin{align*}
    \mathbb{E}_n\left[ \dfrac{1}{n}\sum_{i=1}^n \left| f\left( V_{k,i} \right) - f\left( \mathcal{V}_{k,i} \right)\right| 1(J_i = r) \right] &\leq \varepsilon+ \dfrac{2 f^*}{\delta} \cdot \dfrac{1}{n} \sum_{m=0}^k\sum_{i=1}^n \mathbb{E}_n\left[ \left\|\textbf{R}_i^{(m)}-\boldsymbol{\mathcal{R}}_i^{(m)}\right\|_1 \right] \\ &\leq 
    \varepsilon+ \dfrac{2 f^*}{\delta} \cdot \sum_{m=0}^k \max_{i\in V_n}\mathbb{E}_n\left[ \left\|\textbf{R}_i^{(m)}-\boldsymbol{\mathcal{R}}_i^{(m)}\right\|_1 \right] \\ &\leq 
    \varepsilon+\frac{2(k+1)f^*}{\delta}\cdot\sup_{m\geq0}\max_{i\in V_n}\mathbb{E}_n\left[ \left\|\textbf{R}_i^{(m)}-\boldsymbol{\mathcal{R}}_i^{(m)}\right\|_1\right].
\end{align*}
From Theorem \ref{MFA_generalthm}, we have that $\sup_{m\geq0}\max_{i\in V_n}\mathbb{E}_n\left[ \left\|\textbf{R}_i^{(m)}-\boldsymbol{\mathcal{R}}_i^{(m)}\right\|_1\right] \xrightarrow{P} 0$ as $n \to \infty$. The bounded convergence theorem then gives
$$\lim_{n \to \infty}  E\left[ \dfrac{1}{n}\sum_{i=1}^n \left| f\left( V_{k,i} \right) - f\left( \mathcal{V}_{k,i} \right)\right| 1(J_i = r) \right] \leq
\varepsilon+\frac{2(k+1)f^*}{\delta}\cdot \lim_{n \to \infty} E\left[\sup_{m\geq0}\max_{i\in V_n}\mathbb{E}_n\left[ \left\|\textbf{R}_i^{(m)}-\boldsymbol{\mathcal{R}}_i^{(m)}\right\|_1\right]\right]=\varepsilon,$$
which in turn implies that
$$\dfrac{1}{n}\sum_{i=1}^n \left| f\left( V_{k,i} \right) - f\left( \mathcal{V}_{k,i} \right)\right| 1(J_i = r) \xrightarrow{P} 0, \qquad n \to \infty.$$
Moreover, Kolmogorov's strong law of large numbers gives
\[ \left| \frac{1}{n} \sum_{i=1}^n \left( f(\mathcal{V}_{k,i}) 1(J_i=r)  - E\left[f\left( \mathcal{V}_{k,i} \right) 1(J_i= r) \right]  \right) \right| \to 0 \qquad a.s. \]
as $n \to \infty$. 
Since$$E[ f(\mathcal{V}_{k,i}) 1(J_i = r)] = E[f(\mathcal{V}_{k,i}) | J_i = r] E[\pi_r^{(n)}] = E[ f(\mathcal{V}_k) | \mathcal{J}_\emptyset = r] E[ \pi_r^{(n)}] = E[ f(\mathcal{V}_k) 1(\mathcal{J}_\emptyset = r)] E[\pi_r^{(n)}]/\pi_r ,$$ 
we have shown that
\begin{align*}
&\left| \frac{1}{n} \sum_{i=1}^n f(V_{k,i}) 1(J_i = r) - E\left[f\left( \mathcal{V}_k \right) 1(\mathcal{J}_\emptyset = r) \right] \right|  \\
&\hspace{5mm}\leq  \dfrac{1}{n}\sum_{i=1}^n \left| f\left( V_{k,i} \right) - f\left( \mathcal{V}_{k,i} \right)\right|1(J_i = r) + \left| \frac{1}{n} \sum_{i=1}^n f(\mathcal{V}_{k,i}) 1(J_i=r)  - E\left[f\left( \mathcal{V}_k \right) 1(\mathcal{J}_\emptyset = r) \right] \right|  \\
&\hspace{5mm}\leq \dfrac{1}{n}\sum_{i=1}^n \left| f\left( V_{k,i} \right) - f\left( \mathcal{V}_{k,i} \right)\right|1(J_i = r) + \left| \frac{1}{n} \sum_{i=1}^n \left( f(\mathcal{V}_{k,i}) 1(J_i=r)  - E\left[f\left( \mathcal{V}_{k,i} \right) 1(J_i= r) \right]  \right) \right| \\
&\hspace{10mm} + \left| E[ f(\mathcal{V}_k) 1(\mathcal{J}_\emptyset = r)] \right|  \left|  \frac{E[\pi_r^{(n)}]}{\pi_r} -   1  \right| \xrightarrow{P} \varepsilon
\end{align*}
as $n \to \infty$. Since $\varepsilon > 0$ was arbitrary, the first statement of the theorem follows.

For the second statement, let $\{f_1, \dots, f_m\}$ be bounded and continuous (with respect to $\| \cdot \|_1$) functions on $[-1,1]^{\ell \times (k+1)}$. Recall that we may assume that the functions are uniformly continuous. Now note that $$E\left[ \left. f_j\left( \mathcal{V}_k \right) \right| \mathcal{J}_\emptyset = J_{i_j} \right] = \mathbb{E}_n\left[ f_j\left( \mathcal{V}_{k,i_j} \right) \right],$$ and since the $\{\mathcal{V}_{k,i}: i \in V_n\}$ are conditionally independent given the community labels $\mathscr{J}_n$, then
\begin{align*}
\left| \mathbb{E}_n \left[ \prod_{j=1}^m f_j\left( V_{k,i_j} \right) \right] - \prod_{j=1}^m E\left[ \left. f_j\left( \mathcal{V}_k \right) \right| \mathcal{J}_\emptyset = J_{i_j} \right] \right| &= \left| \mathbb{E}_n \left[ \prod_{j=1}^m f_j\left( V_{k,i_j} \right) \right] - \mathbb{E}_n\left[  \prod_{j=1}^m f_j\left(\mathcal{V}_{k,i_j} \right) \right] \right| \\
&\leq  \mathbb{E}_n \left[ \left| \prod_{j=1}^m f_j\left( V_{k,i_j} \right) -  \prod_{j=1}^m f_j\left(\mathcal{V}_{k,i_j} \right)  \right| \right]  \\
&\leq  \prod_{j=1}^m f_j^*  \sum_{j=1}^m \mathbb{E}_n \left[   \left|  f_j\left( V_{k,i_j} \right) -  f_j\left(\mathcal{V}_{k,i_j} \right)  \right| \right] ,
\end{align*}
where $f_j^* = \max_{X \in [-1,1]^{\ell \times (k+1)}} |f_j(X)|$, and we have used the inequality $$\left| \prod_{i=1}^m x_i - \prod_{i=1}^m y_i \right| \leq \prod_{i=1}^m (|x_i| \vee |y_i|) \sum_{i=1}^m |x_i - y_i|.$$
To complete the proof, note that the random variables $\{  \left|  f_j\left( V_{k,t} \right) -  f_j\left(\mathcal{V}_{k,t} \right)  \right|: J_t = r \}$ are exchangeable, so if $J_{i_j} = r_j$, then
\begin{align*}
\mathbb{E}_n \left[   \left|  f_j\left( V_{k,i_j} \right) -  f_j\left(\mathcal{V}_{k,i_j} \right)  \right| \right]  &= \frac{1}{n\pi_{r_j}^{(n)}} \sum_{t: J_t = r_j} \mathbb{E}_n \left[   \left|  f_j\left( V_{k,t} \right) -  f_j\left(\mathcal{V}_{k,t} \right)  \right| \right] \\
&= \frac{1}{\pi_{r_j}^{(n)}}  \mathbb{E}_n \left[ \frac{1}{n} \sum_{t=1}^n  \left|  f_j\left( V_{k,t} \right) -  f_j\left(\mathcal{V}_{k,t} \right)  \right| 1(J_t = r_j) \right]  .
\end{align*}
Now apply the same arguments used in the first part of the theorem to obtain that for any $\varepsilon >0$ there exists a $\delta > 0$ such that
\[ \mathbb{E}_n \left[   \left|  f_j\left( V_{k,i_j} \right) -  f_j\left(\mathcal{V}_{k,i_j} \right)  \right| \right]   \leq  \frac{1}{\pi_{r_j}^{(n)}} \left(\varepsilon + \frac{2(k+1)f_j^*}{\delta}\sup_{m\geq0}\max_{i\in V_n}\mathbb{E}_n \left[\left\| \mathbf{R}_i^{(m)}-\boldsymbol{\mathcal{R}}_i^{(m)} \right\|_1 \right]\right).  \]
We conclude that
\begin{align*}
&\left| \mathbb{E}_n \left[ \prod_{j=1}^m f_j\left( V_{k,i_j} \right) \right] - \prod_{j=1}^m E\left[ \left. f_j\left( \mathcal{V}_k \right) \right| \mathcal{J}_\emptyset = J_{i_j} \right] \right|  \\
&\hspace{5mm}\leq \prod_{j=1}^m f_j^* \sum_{j=1}^m \frac{1}{ \pi_{r_j}^{(n)} } \left( \varepsilon + \frac{2(k+1)f_j^*}{\delta}\sup_{m\geq0}\max_{i\in V_n} \mathbb{E}_n \left[\left\| \mathbf{R}_i^{(m)}-\boldsymbol{\mathcal{R}}_i^{(m)} \right\|_1\right] \right) \xrightarrow{P}  \prod_{j=1}^m f_j^* \sum_{j=1}^m \frac{\varepsilon}{ \pi_{r_j} }  
\end{align*}
as $n \to \infty$. Taking $\varepsilon \downarrow 0$ completes the proof. 
\end{proof}

We now prove Theorem~\ref{thm_convergence_in_prob}, which refers to the stationary behavior of the process $\{ R^{(k)}: k \geq 0\}$.

\begin{proof}[Proof of Theorem \ref{thm_convergence_in_prob}] 
We will start by constructing a coupling of two matrices, $R \in [-1,1]^{n \times \ell}$ and $\mathcal{R} \in [-1,1]^{n \times \ell}$, the first one with rows $R_{j \bullet} = \mathbf{R}_j$ and the second one with rows $\mathcal{R}_{j \bullet} = \boldsymbol{\mathcal{R}}_j$, 
as well as of the random variable $\mathcal{J}_\emptyset$, such that $R$ has the stationary distribution of $\{ R^{(k)}: k \geq 0\}$,
$$\boldsymbol{\mathcal{R}}_i = \sum_{t=0}^{\infty} (1-c-d)^t \textbf{W}^{(t)}_i +  \sum_{t=1}^{\infty} \sum_{s=1}^t a_{s,t} (M^s \bar W)_{J_i}  , \qquad i \in V_n,$$
and 
\[ \max_{i\in V_n}\mathbb{E}_n \left[\left\| \mathbf{R}_i-\boldsymbol{\mathcal{R}}_i \right\|_1 \right]\to0, \qquad n\to\infty. \]
Note that exactly the same arguments used in the proof of Theorem~\ref{thm_LWCprob} (set $k = 0$) will then yield
$$\frac{1}{n} \sum_{i=1}^n f(\textbf{R}_i) 1(J_i = r) \xrightarrow{P} E[ f(\boldsymbol{\mathcal{R}}_\emptyset) 1(\mathcal{J}_\emptyset = r) ] \qquad \text{and} \qquad \mathbb{E}_n\left[ \prod_{j=1}^m f_j(\mathbf{R}_{i_j}) \right] \xrightarrow{P} \prod_{j=1}^m E[ f_j(\boldsymbol{\mathcal{R}}_\emptyset) | \mathcal{J}_\emptyset = J_{i_j}],$$
as $n \to \infty$. Hence, we only need to exhibit the coupling.

Define $\{ B^{(k)}: k \geq 0\} \subset [-1,1]^{n\times\ell}$ to be the process defined according to 
$$B^{(k)} = \sum_{t=0}^{k-1} A^t W^{(t)},$$
and let $B = \sum_{t=0}^\infty A^t W^{(t)}$. To see that $B$ is well-defined, recall that $\| A \|_\infty = 1-d<1$, and therefore, 
\[ 
\left\| B-B^{(k)}\right\|_\infty = \left\| \sum_{t=k}^\infty A^tW^{(t)} \right\|_\infty \leq \sum_{t=k}^\infty\|A\|_\infty^t \| W^{(t)} \|_\infty \leq \ell\sum_{t=k}^\infty (1-d)^t = \ell (1-d)^k/d \to 0,
 \]
as $k \to \infty$, which yields $B = \lim_{k \to \infty} B^{(k)}$. 

Going back to the original opinion recursion (\ref{opinion_recursion_matrix_form}), note that it satisfies
$$R^{(k)} = \sum_{r=0}^{k-1} A^t W^{(k-t)} + A^k R^{(0)} \stackrel{\mathcal{D}}{=} B^{(k)} +  A^k R^{(0)} ,$$
from where we obtain
$$R^{(k)} \Rightarrow B, \qquad k \to \infty.$$
It follows that $B \stackrel{\mathcal{D}}{=} R$. The coupling we need is given by $(B, \mathcal{R})$. It remains to show that 
$$\max_{i\in V_n}\mathbb{E}_n \left[\left\| \mathbf{B}_i-\boldsymbol{\mathcal{R}}_i \right\|_1 \right]\to0, \qquad n\to\infty,$$
where $\mathbf{B}_i$ is the $i$th row of $B$.

To this end, define the matrix $\mathcal{B}^{(k)} \in [-1,1]^{n \times \ell}$ having rows $(\mathcal{B}^{(k)})_{i \bullet } = \boldsymbol{\mathcal{B}}_i^{(k)}$ given by
$$\boldsymbol{\mathcal{B}}^{(k)}_i =  \sum_{t=0}^{k-1} (1-c-d)^t \textbf{W}^{(t)}_i + 1(k \geq 2) \sum_{t=1}^{k-1} \sum_{s=1}^t a_{s,t} (M^s \bar W)_{J_i} .$$
Next, note that for any $k \geq 1$ and any $i\in V_n$, 
\begin{align*}
    \left\| \mathbf{B}_i-\boldsymbol{\mathcal{R}}_i \right\|_1 &\leq \left\| \mathbf{B}_i-\mathbf{B}_i^{(k)} \right\|_1 +
    \left\| \mathbf{B}_i^{(k)}-\boldsymbol{\mathcal{B}}_i^{(k)} \right\|_1 +
    \left\| \boldsymbol{\mathcal{B}}_i^{(k)}-\boldsymbol{\mathcal{R}}_i \right\|_1 \\
    &\leq \| B - B^{(k)} \|_\infty + \left\| \mathbf{B}_i^{(k)}-\boldsymbol{\mathcal{B}}_i^{(k)} \right\|_1 + \| \mathcal{B}^{(k)} - \mathcal{R} \|_\infty.
\end{align*}
We have already computed $\left\| B - B^{(k)} \right\|_\infty \leq \ell (1-d)^k/d$, and the last norm can be computed as follows:
\begin{align*}
    \left\| \mathcal{B}^{(k)}-\mathcal{R} \right\|_\infty &\leq \sum_{t=k}^{\infty} (1-c-d)^t \| W^{(t)} \|_\infty +  \sum_{t=k}^{\infty} \sum_{s=1}^t a_{s,t} \left\| M^s \bar W  \right\|_\infty \\
&\leq \ell  \sum_{t=k}^{\infty} (1-c-d)^t  +  \ell \sum_{t=k}^{\infty} \sum_{s=1}^t a_{s,t}   = \ell  \sum_{t=k}^{\infty} (1-c-d)^t  + \ell \sum_{t=k}^\infty \sum_{s=1}^t \binom{t}{s} (1-c-d)^{t-s} c^s  \\
&= \ell  \sum_{t=k}^{\infty} (1-c-d)^t  +  \ell \sum_{t=k}^{\infty} ((1-d)^t  - (1-c-d)^t) = \ell \sum_{t=k}^\infty (1-d)^t = \ell (1-d)^k/d,
\end{align*}
where in the second inequality we used the observation that $\| W^{(t)} \|_\infty \leq \ell$ and $\| M^s \bar W \|_\infty \leq \ell$.

We have thus shown that for any $i \in V_n$ and any $k \geq 0$,
\[  \left\| \mathbf{B}_i-\boldsymbol{\mathcal{R}}_i \right\|_1 \leq \left\| \mathbf{B}_i^{(k)}-\boldsymbol{\mathcal{B}}_i^{(k)} \right\|_1 + 2\ell (1-d)^k/d.  \]
Taking expectations on both sides, followed by the maximum over $i$, we get
\begin{equation}\label{sup_max_to0}
    \max_{i\in V_n}\mathbb{E}_n\left[ \left\| \mathbf{B}_i-\boldsymbol{\mathcal{R}}_i \right\|_1 \right]\leq 2\ell (1-d)^k/d+\sup_{m\geq0}\max_{i\in V_n}\mathbb{E}_n\left[ \left\| \mathbf{B}_i^{(m)}-\boldsymbol{\mathcal{B}}_i^{(m)} \right\|_1 \right].
\end{equation}
Finally, note that by Theorem~\ref{MFA_generalthm}, 
\begin{align*}
\sup_{m\geq0}\max_{i\in V_n}\mathbb{E}_n\left[ \left\| \mathbf{B}_i^{(m)}-\boldsymbol{\mathcal{B}}_i^{(m)} \right\|_1 \right]  &= \sup_{m\geq0}\max_{i\in V_n}\mathbb{E}_n\left[ \left\| \mathbf{R}_i^{(m)}-\boldsymbol{\mathcal{R}}_i^{(m)} \right\|_1 \right]  \xrightarrow{P} 0, \qquad n \to \infty.
\end{align*}
Taking $k\to\infty$ in (\ref{sup_max_to0}) completes the proof.  
\end{proof}

\subsection*{Acknowledgments}
 This work was supported by NSF grant CMMI-2243261.

\bibliographystyle{apalike}
\bibliography{references}

\end{document}